\documentclass[12pt]{article} 
\usepackage[utf8]{inputenc}
\usepackage[T1]{fontenc}
\usepackage[english]{babel}

\usepackage{mathrsfs}

\usepackage[stretch=10]{microtype}
\usepackage{amsmath}
\usepackage{amsthm}
\usepackage{amssymb} 
\usepackage{mathtools}
\usepackage{enumitem}

\usepackage[textwidth=6in]{geometry}

\usepackage{hyperref}

\allowdisplaybreaks

\newcommand{\hlint}[2]{\left({#1}, {#2}\right]}

\newcommand{\CC}{\mathbb{C}}
\newcommand{\RR}{\mathbb{R}}

\newcommand{\cB}{\mathcal{B}}

\newcommand{\cD}{\mathcal{D}}
\newcommand{\cH}{\mathcal{H}}

\newcommand{\cS}{\mathcal{S}}

\newcommand{\D}{\mathrm{d}}

\renewcommand{\i}{{\mathrm{i}}}

\newtheorem{thm}{Theorem}
\newtheorem{prop}[thm]{Proposition}

\newtheorem{lemma}[thm]{Lemma}
\newtheorem{defn}[thm]{Definition}
\newtheorem{cor}[thm]{Corollary}
\numberwithin{thm}{section} 

\theoremstyle{remark}
\newtheorem{rmk}[thm]{Remark}

\newcommand{\astfootnote}[1]{%
\let\oldthefootnote=\thefootnote%
\setcounter{footnote}{0}%
\renewcommand{\thefootnote}{\fnsymbol{footnote}}%
\footnote{#1}%
\let\thefootnote=\oldthefootnote%
}
\newcommand{\secondastfootnote}[1]{%
\let\oldthefootnote=\thefootnote%
\setcounter{footnote}{6}%
\renewcommand{\thefootnote}{\fnsymbol{footnote}}%
\footnote{#1}%
\let\thefootnote=\oldthefootnote%
}

\usepackage{xcolor}

\newcommand{\p}{\partial}

\DeclareMathOperator{\diam}{diam}
\DeclareMathOperator{\dist}{dist}
\DeclareMathOperator{\vol}{vol}

\DeclareMathOperator{\BMO}{BMO}
\DeclareMathOperator{\Tr}{Tr}
\DeclareMathOperator{\re}{\mbox{Re} \, } 
\DeclareMathOperator{\im}{\mbox{Im} \, } 

\title{The homogeneous and inhomogeneous Dirichlet problem \\ \vspace{12pt} \large Dedicated to Marius Mitrea on his 60\textsuperscript{th} birthday, with friendship and appreciation}
\author{David S. Jerison\astfootnote{Supported in part by Simons Foundation Collaboration Grant 601948 DJ} \and Carlos E. Kenig\secondastfootnote{Supported in part by NSF grant DMS-2153794}}

\usepackage[backend=biber, style=alphabetic,maxnames=50,maxalphanames=50,sorting=nyt,safeinputenc]{biblatex}
\DeclareNameAlias{author}{last-first}

\begin{document}
  \maketitle
\begin{abstract} 
We revisit the homogeneous and inhomogeneous Dirichlet problem for the Laplacian on Lipschitz domains. This is motivated by the recent postings by Amrouche and Moussaoui which purport to contradict known area integral estimates of Dahlberg and
known Sobolev space estimates of Jerison and Kenig. We explain the fatal gap in the reasoning in these postings and give a self-contained proof of a special case of the results of Dahlberg.  We then show that this is sufficient to disprove
the central conclusions of these postings. We also provide two further proofs of the results of Dahlberg, adapted from 
work of Kenig and of Dahlberg-Kenig-Pipher-Verchota.   Other proofs are in the original paper by Dahlberg
and in work by Fabes-Mendez-Mitrea.
\end{abstract}

  \bigskip
  
  The classical theorem concerning the inhomogeneous Dirichlet problem (or Poisson problem) says that for every bounded, open set $\Omega \subset \RR^n$ and every $f \in H^{-1}(\Omega)$, there is a unique $u \in H_0^1(\Omega)$   that solves
  \begin{align}
    \Delta u \
    &= \ f.
    \label{eq:1}
  \end{align}
(The spaces $H^{-1}$, $H_0^1$, $H^2$, $H^s$ are the standard $L^2$-based Sobolev spaces.)     Furthermore, $u$ satisfies
  \begin{align}
    \|\nabla u\|_{L^2(\Omega)} \
    &= \ \|f\|_{H^{-1}(\Omega)}.
    \label{eq:2}
  \end{align}
  In other words, $\Delta^{-1}$ is an isometry from $H^{-1}(\Omega)$ to $H_0^1(\Omega)$, provided $H_0^1(\Omega)$ is equipped with the norm $\|u\|_{H_0^1(\Omega)} = \|\nabla u\|_{L^2(\Omega)}$ and $H^{-1}(\Omega)$ with the dual norm.  

  We can only expect higher regularity of $u$ under further assumptions on $f$ and the domain $\Omega$.
  For a bounded Lipschitz domain $\Omega \subset \RR^n$, one can choose the norm $\| \cdot \|_{\ast}$ on the space of $u \in H^2(\Omega) \cap H_0^1(\Omega)$ to be
  \begin{align*}
    \|u\|_{\ast}^2 \
    &= \ \int\limits_{\Omega} \sum_{i, j=1}^{n} \left(\p_i \p_j u \right)^2 \, \D x \quad
\mbox{with} \ \ x = (x_1,\, x_2, \, \dots \, , \, x_n) \ \ \mbox{and} \ \ \p_j = \p/\p x_j. 
  \end{align*}
  In 1963, J. Kadlec proved the optimal second-order regularity for the inhomogeneous Dirichlet problem in convex domains.
  A special case of this theorem is as follows \cite{K64}.
  \begin{thm}
    \label{thm:0.1}
    If $\Omega$ is a bounded, open, convex subset of $\RR^n$, then for every $f \in L^2(\Omega)$ the unique function $u \in H_0^1(\Omega)$ such that $\Delta u = f$ satisfies
    \begin{align*}
      \|u\|_{\ast} \,
      &\leq \, \|f\|_{L^2(\Omega)}.
    \end{align*}
  \end{thm}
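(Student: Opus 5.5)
The plan follows the classical route: prove an integration-by-parts identity for smooth convex domains, then pass to general convex domains by approximation.

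\textbf{Step 1: smooth convex domains and smooth $u$.} Assume first that $\Omega$ is convex with $C^\infty$ boundary and that $u \in C^\infty(\overline\Omega)$ with $u=0$ on $\p\Omega$. Integrating by parts twice gives
\begin{align*}
  \int_\Omega (\Delta u)^2 \, \D x - \int_\Omega \sum_{i,j} (\p_i\p_j u)^2 \, \D x
  \ = \ \int_{\p\Omega} \bigl( \Delta u \, \p_\nu u - \nabla u \cdot \nabla(\p_\nu u) \bigr) \, \D\sigma ,
\end{align*}
where $\p_\nu$ is the outward normal derivative. More precisely, the boundary integrand is $\Delta u\,\nabla u\cdot\nu - \sum_{i,j}\p_j u\,\p_i\p_j u\,\nu_i$. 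Since $u=0$ on $\p\Omega$, we have $\nabla u = (\p_\nu u)\nu$ there. Decompose $\Delta u$ on the boundary as $\p_\nu^2 u + H\,\p_\nu u + \Delta_{\p\Omega} u$, where $H$ is the mean curvature (the sum of the principal curvatures with respect to the inner normal) and $\Delta_{\p\Omega}u=0$. The $\p_\nu^2 u$ terms cancel, and the identity becomes
\begin{align*}
  \int_\Omega (\Delta u)^2 \, \D x \ = \ \|u\|_\ast^2 + \int_{\p\Omega} H \, (\p_\nu u)^2 \, \D\sigma .
\end{align*}
Convexity gives $H \ge 0$, so $\|u\|_\ast \le \|\Delta u\|_{L^2(\Omega)}$.

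Checking the curvature sign and the cancellation is a routine computation in local coordinates where $\p\Omega$ is a graph with vanishing gradient at the point. Standard elliptic regularity on smooth domains gives $u\in H^2$ for $f\in L^2$. Approximating $f$ by smooth functions, so that $u$ is smooth up to the boundary, extends the inequality to all $f\in L^2$ on smooth convex domains.

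\textbf{Step 2: approximation of a general convex $\Omega$.} Choose smooth convex domains $\Omega_k \uparrow \Omega$, for instance by smoothing the gauge function, or by taking sublevel sets of a mollified convex defining function. Extend $f$ by zero and solve $\Delta u_k = f$ in $\Omega_k$ with $u_k\in H^1_0(\Omega_k)$. By Step 1, $\|u_k\|_{\ast,\Omega_k}\le\|f\|_{L^2(\Omega)}$. The identity \eqref{eq:2} gives a uniform $H^1_0$ bound, since $\|f\|_{H^{-1}(\Omega_k)}\le C\|f\|_{L^2}$ by Poincaré with a constant controlled by $\diam\Omega$.

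Extend $u_k$ by zero to get functions in $H^1_0(\Omega)$. A subsequence converges weakly in $H^1_0(\Omega)$ to some $v$. Moreover, on each fixed compact $K\subset\Omega$ the second derivatives of $u_k$ are bounded in $L^2(K)$, so weakly $\p_i\p_j u_k\to\p_i\p_j v$ on $K$.

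For a test function $\varphi\in C_c^\infty(\Omega)$, eventually $\operatorname{supp}\varphi\subset\Omega_k$, and then $\int \nabla u_k\cdot\nabla\varphi = -\int f\varphi$. Passing to the limit gives $\Delta v=f$ weakly with $v\in H^1_0(\Omega)$, so $v=u$ by uniqueness. Weak lower semicontinuity on each $K$, followed by letting $K\uparrow\Omega$, yields $\|u\|_\ast\le\|f\|_{L^2(\Omega)}$, and in particular $u\in H^2(\Omega)$.

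\textbf{Main obstacle.} The delicate point is Step 1: getting the boundary term exactly equal to $\int H(\p_\nu u)^2$ with the correct sign convention, and justifying the integration by parts. The approximation in Step 2 is soft. Its only subtlety is ensuring that the $\Omega_k$ are genuinely smooth and convex and exhaust $\Omega$, which is standard.
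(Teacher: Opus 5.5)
Your proof is correct. Its architecture matches the paper's: a curvature integration-by-parts identity whose boundary term is nonnegative on convex domains, followed by approximation. The difference is in how you obtain the a priori inequality on smooth convex domains.

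The paper states only the planar identity \eqref{eq:3} and reaches $\RR^n$ by slicing. Every nonempty slice of a smooth convex domain by a plane parallel to the $x_ix_j$-plane is a smooth convex planar domain on whose boundary $u$ vanishes. Fubini then gives $\int_\Omega(\p_i\p_ju)^2\,\D x\le\int_\Omega\p_i^2u\,\p_j^2u\,\D x$ for each pair $i\neq j$, and summing over pairs yields $\|u\|_\ast\le\|\Delta u\|_{L^2(\Omega)}$.

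You instead use the $n$-dimensional identity $\|\Delta u\|_{L^2(\Omega)}^2=\|u\|_\ast^2+\int_{\p\Omega}H(\p_\nu u)^2\,\D\sigma$, which is the form in Grisvard's book. What each route buys:
\begin{itemize}
\item The slicing route needs only the curvature of plane curves, and it produces the finer pairwise inequalities.
\item Your route requires the boundary decomposition $\Delta u=\p_\nu^2u+H\,\p_\nu u+\Delta_{\p\Omega}u$. In exchange, it gives the exact defect in one step and shows the a priori bound holds on any smooth domain with $H\ge0$, not only on convex ones.
\end{itemize}

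For the passage to general convex $\Omega$, the paper simply cites \cite[p.~147]{G85}. Your Step~2 is that argument written out: uniform bounds from Step~1 and Lemma~\ref{lemma:0.3}, identification of the weak limit through uniqueness in $H_0^1(\Omega)$, and lower semicontinuity on compact subsets. It is sound.

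Incidentally, your normalization is the right one. In the plane $(\Delta u)^2-|\nabla^2u|^2=2\bigl(\p_{xx}u\,\p_{yy}u-(\p_{xy}u)^2\bigr)$, so your identity shows that the right-hand side of \eqref{eq:3} should carry a factor $\tfrac12$. You can check this with $u=R^2-|x|^2$ on a disk of radius $R$. The discrepancy is harmless, because only the sign of the boundary term is used.
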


  In other words, the inverse mapping $\Delta^{-1} \colon L^2(\Omega) \to H^2(\Omega) \cap H_0^1(\Omega)$ is well-defined and a contraction under the $\| \cdot \|_{\ast}$ norm.
  (For optimal regularity up to third order, see \cite{FJ94}).

  In essence, Kadlec's proof follows from an integration by parts formula in the plane.
  Suppose that $\Omega$ is a smooth, bounded, planar domain, $u \in C^2(\overline{\Omega})$ and $u = 0$ on $\partial \Omega$.
  Then
  \begin{align}
    \int\limits_{\Omega} \left[(\p_{x x} u) (\p_{y y} u) - (\p_{xy} u)^2 \right] \, \D x \, \D y \
    &= \ \int\limits_{\partial \Omega} \kappa \left(\frac{\p u}{\p \nu} \right)^2 \, \D \sigma
    \label{eq:3}
  \end{align}
  with ${\p}/{\p \nu}$ the normal derivative, $\kappa$ the curvature, and $\D \sigma$ the arc length measure of $\partial \Omega$.
  In particular, if $\Omega$ is convex, then $\kappa\ge 0$, and 
  \begin{align*}
    \int\limits_{\Omega} \left(\p_{xy} u\right)^2 \, \D x \, \D y \
    &\leq \ \int\limits_{\Omega} \left(\p_{xx} u \right) \left(\p_{yy} u \right) \, \D x \, \D y.
  \end{align*}
  It follows from Fubini's theorem and by summing over all pairs $(i, j)$ that for smooth, bounded, convex domains $\Omega \subset \RR^n$ and functions $u \in C^2(\overline{\Omega})$ satisfying $u = 0$ on $\partial \Omega$, we have
  \begin{align*}
    \int\limits_{\Omega} \sum_{i, j=1}^{n} \left(\p_i \p_j u \right)^2 \, \D x \
    &\leq \ \int\limits_{\Omega} \left(\Delta u\right)^2 \, \D x, \quad x = (x_1, \ldots, x_n).
  \end{align*}
  From this inequality, Kadlec deduced Theorem \ref{thm:0.1} (see \cite[p.147]{G85}).

  In the case of polygons $\Omega$ in the plane, Grisvard observed in \cite{G72} that 
  \begin{align}
    \int\limits_{\Omega} \left[\left(\p_{xx} u \right) \left(\p_{yy} u \right) - \left(\p_{xy} u \right)^2 \right] \, \D x \, \D y \
    &= \ 0
    \label{eq:4}
  \end{align}
  for all $u \in C^2(\overline{\Omega})$ such that $u \equiv 0$ on $\partial \Omega$.
  An intuitive explanation for why the left-hand side of \eqref{eq:4} is zero for polygons is that $\kappa = 0$ on the sides of the polygon, and $\nabla u = 0$ at the vertices.
  The proof in \cite{G72} involves an additional integration by parts on each side and is valid for many more boundary value problems than the Dirichlet problem\textemdash indeed, any oblique derivative condition on each side.

  It follows easily from Fubini's theorem that the analogous statement holds true for polyhedra in all dimensions.
  Moreover, the identity remains valid for functions in the closure in $H^2(\Omega)$ of this class of functions $u$.
  Thus, Grisvard obtains the following a priori identity.

  \begin{thm}
    \label{thm:0.2}
    If $\Omega$ is a bounded polyhedron in $\RR^n$ and $u \in H^2(\Omega) \cap H_0^1(\Omega)$, then 
    \begin{align*}
      \|u\|_{\ast}^2 \
      &= \ \sum_{i,j = 1}^{n} \|\p_i \p_j u\|^2 \
      = \ \|\Delta u\|^2,
    \end{align*}
    where $\| \cdot \|$ is the $L^2(\Omega)$ norm.
  \end{thm}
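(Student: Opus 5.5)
The plan is to expand $\|\Delta u\|^2$ and show that every mixed term vanishes, first for smooth functions and then by density. Write
\begin{align*}
  \|\Delta u\|^2 - \sum_{i,j=1}^n \|\p_i\p_j u\|^2 \ = \ 2\sum_{i<j} \int\limits_{\Omega}\left[(\p_{ii}u)(\p_{jj}u) - (\p_{ij}u)^2\right] \D x .
\end{align*}
It therefore suffices to show that, for each fixed pair $i<j$, the integral of the $2\times 2$ minor $(\p_{ii}u)(\p_{jj}u)-(\p_{ij}u)^2$ over $\Omega$ is zero.

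\textbf{Step 1 (smooth class).} Let $\mathcal{C}$ be the class of $u \in C^2(\overline{\Omega})$ with $u = 0$ on $\p\Omega$. For $u\in\mathcal{C}$, fix $i<j$ and use Fubini's theorem to slice $\Omega$ by the $2$-planes parallel to the $x_i x_j$-plane. For almost every value of the remaining $n-2$ coordinates, the slice is a finite union of planar polygons, because the intersection of a polyhedron with a plane is polygonal. The restriction of $u$ is $C^2$ up to the boundary of each polygon and vanishes on it. Identity \eqref{eq:4} of Grisvard then gives zero for each slice, and integrating over the transverse variables gives zero for $\Omega$.

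I would take \eqref{eq:4} as established. As a sanity check, integrate by parts twice on each side: the boundary integrand becomes a tangential derivative of $\p_\tau u\,\p_\nu u$. Since $u$ vanishes on each side, $\p_\tau u = 0$ there, and the terms at the vertices cancel or vanish because $\nabla u = 0$ at corners.

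\textbf{Step 2 (density).} Show that $\mathcal{C}$ is dense in $H^2(\Omega)\cap H_0^1(\Omega)$ in the $H^2$ norm. The quadratic form $Q(u)=\int [(\p_{ii}u)(\p_{jj}u)-(\p_{ij}u)^2]$ is continuous on $H^2$, so it then vanishes on the whole space, and the identity follows.

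This density is the main obstacle. I would prove it following Grisvard. First, by a partition of unity, localize near each face, edge, and vertex; near a vertex the domain looks like a polyhedral cone. Next, given $u\in H^2\cap H_0^1$, approximate $f=\Delta u$ in $L^2$ by smooth functions $f_k$ compactly supported in $\Omega$. Then set $u_k = \Delta^{-1} f_k$ (Dirichlet problem), and argue in two parts:
\begin{enumerate}
\item[(a)] $u_k$ is $C^2$ up to the boundary away from the singular set of dimension $\le n-2$. I would try to reduce further to functions vanishing near corners by a capacity/cutoff argument: sets of codimension $2$ have zero $H^2$-capacity only when $n\ge$ a threshold, which fails in general.
\item[(b)] Hence the more robust alternative: prove the identity directly for $u_k$ by excising $\varepsilon$-neighbourhoods of edges and vertices. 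Then show that the boundary contributions on the excision surfaces tend to zero along a subsequence, using $\nabla u_k\in H^1$ and averaging over $\varepsilon$, since $\int_0^{\delta}\int_{\{r=\varepsilon\}}|\nabla u_k||\nabla^2u_k|\,\D\sigma\,\D\varepsilon/\varepsilon$ is controlled by Hardy-type bounds.
\item[(c)] Finally, pass to the limit $k\to\infty$. This uses that $u_k\to u$ in $H^2$ when $u$ itself is in $H^2$, which requires uniqueness in $H^2\cap H_0^1$ and the a priori bound. Since that bound is what we want to prove, I would instead apply (b) directly to $u$ after mollifying tangentially along faces.
\end{enumerate}
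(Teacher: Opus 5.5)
Your Step 1 is the paper's argument: Grisvard's polygon identity \eqref{eq:4} on the two-dimensional slices, integrated by Fubini. The paper then passes to the $H^2$-closure of this smooth class, tacitly taking from Grisvard that this closure is all of $H^2(\Omega)\cap H_0^1(\Omega)$. Your Step 2, which tries to supply that density, has a genuine gap.

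\emph{The approximants are the wrong objects.} For a nonconvex polyhedron and typical $f_k\in C_c^\infty(\Omega)$, the function $u_k=\Delta^{-1}f_k$ behaves near a reentrant edge (dihedral angle $\omega>\pi$) like a nonzero multiple of $r^{\pi/\omega}\sin(\pi\theta/\omega)$. So $u_k\notin H^2$; this is exactly why $R\neq L^2(\Omega)$. Hence the premise $\nabla u_k\in H^1$ in (b) is false, and $u_k\to u$ in $H^2$ is not merely circular, it fails.

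\emph{The fallback in (c) is unjustified.} It is one sentence. Finiteness of $\int_{r<\delta}|\nabla u|\,|\nabla^2u|\,r^{-1}\,\D x$ needs control of $\int|\nabla u|^2r^{-2}$. That does not follow from a Hardy inequality for $\nabla u\in H^1$, because Hardy's inequality fails exactly when $r$ is the distance to a codimension-two set. Near edges such a bound can be extracted from the fact that $\nabla u$ is normal to two non-coplanar faces, but you do not give that argument. Also, ``mollifying tangentially along faces'' is not defined near an edge, since smoothing along one face destroys $u=0$ on the adjacent face.

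The missing idea is the cutoff route you abandoned in (a). Let $\Sigma$ be the union of faces of dimension $\le n-2$. Since $\Sigma\subset\p\Omega$, $u$ already vanishes there, so the $H^2$-capacity of $\Sigma$ is irrelevant. Only $\nabla u\in H^1$ must be cut off, and codimension-two sets have zero $H^1$-capacity.

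A clean implementation uses the symmetric divergence form
\[
2\left[(\p_{ii}u)(\p_{jj}u)-(\p_{ij}u)^2\right]=\p_iV_i+\p_jV_j,\qquad V_i=\p_iu\,\p_{jj}u-\p_ju\,\p_{ij}u,\quad V_j=\p_ju\,\p_{ii}u-\p_iu\,\p_{ij}u .
\]
On a flat face where $u=0$, the normal component of $V$ is $\p_\nu u\,\p_e^2u=0$, where $e=\nu_je_i-\nu_ie_j$ is tangent to the face. Take Lipschitz $\chi$ vanishing near $\Sigma$. The faces meeting $\mathrm{supp}\,\chi$ are flat and separated, so odd reflection across them plus mollification justifies, for every $u\in H^2\cap H_0^1$,
\[
2\int_\Omega\chi\left[(\p_{ii}u)(\p_{jj}u)-(\p_{ij}u)^2\right]\D x=-\int_\Omega\left(\p_i\chi\,V_i+\p_j\chi\,V_j\right)\D x .
\]
Choose $\chi_\varepsilon=\eta\bigl(\log(1/r)/\log(1/\varepsilon)\bigr)$ with $r=\dist(x,\Sigma)$, $\eta=1$ on $(-\infty,1/2]$ and $\eta=0$ on $[1,\infty)$. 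Then $|\nabla\chi_\varepsilon|\lesssim(r\log(1/\varepsilon))^{-1}$ on $\{\varepsilon<r<\sqrt{\varepsilon}\}$, and Cauchy--Schwarz bounds the right side by
\[
\frac{C}{\log(1/\varepsilon)}\Bigl(\int_{\{\varepsilon<r<\sqrt{\varepsilon}\}}|\nabla u|^2r^{-2}\,\D x\Bigr)^{1/2}\,\|\nabla^2u\|_{L^2(\{r<\sqrt{\varepsilon}\})}.
\]
Circle averages of $H^1$ functions grow at most logarithmically in the two-dimensional cross sections of the edges. This gives $\int_{\{\varepsilon<r<\sqrt{\varepsilon}\}}|\nabla u|^2r^{-2}\,\D x=O(\log^2(1/\varepsilon))$; near lower-dimensional strata, add dyadic rescaling and Hardy's inequality in codimension $\ge 3$. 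So the right side tends to $0$. Dominated convergence on the left then yields the identity on all of $H^2\cap H_0^1$, which also makes Step 1 unnecessary.

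If you prefer the density statement itself, the same cutoff gives $\chi_\varepsilon u\to u$ in $H^2$. That needs the sharper bound $o(\log^2(1/\varepsilon))$ and the angular Poincar\'e inequality $\int|u|^2r^{-4}\lesssim\int|\nabla u|^2r^{-2}$, which comes from $u=0$ on the faces.
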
    

  The difference between Theorem \ref{thm:0.2} and Theorem \ref{thm:0.1} is that for nonconvex polyhedra, the assumptions $\Delta u \in L^2(\Omega)$ and $u \in H_0^1(\Omega)$ do not imply that $u \in H^2(\Omega)$.
  The inequality $\|u\|_{\ast} \leq \|\Delta u\|$ and standard functional analysis imply that the mapping $\Delta \colon H^2(\Omega) \cap H_0^1(\Omega) \to L^2(\Omega)$ is injective and has closed range $R$ in $L^2$.
  When $\Omega$ is convex, the range of $R$ is all of $L^2(\Omega)$.
  If $\Omega$ is a convex polyhedron, then the mappings $\Delta$ and $\Delta^{-1}$ as in Theorem \ref{thm:0.1} are isometries between the $\|\cdot\|_{\ast}$ and the $L^2$-norm.
  But when $\Omega$ is a non-convex polyhedron, the range $R$ is never all of $L^2(\Omega)$.
  In \cite{G72}, Grisvard identifies $R$ for polygons as a space of codimension equal to the number of in-pointing (non-convex) vertices.
  Put another way, Grisvard's identity for polyhedra is only valid if $\|u\|_{\ast} < \infty$, the earmark of an a priori equality or inequality.

  Despite the fact that Grisvard's estimate only applies to solutions to $\Delta u = f$ for $f \in R$, it is tempting to interpolate between his estimate and the classical one, namely, for every bounded open set $\Omega \in \RR^n$ and every $f \in H^{-1}(\Omega)$, there is a unique $u \in H_0^1(\Omega)$ that solves $\Delta u = f$ and
  \begin{align*}
    \|\nabla u \|_{L^2(\Omega)} \
    &= \ \|f\|_{H^{-1}(\Omega)}.
  \end{align*}
  This idea is treated in the recent postings by C. Amrouche and M. Moussaoui \cite{AM22}, \cite{AM25}, \cite{AM26}.
  The articles purport to derive estimates that contradict known estimates for Lipschitz domains in the Sobolev function space $H^{3/2}(\Omega)$ in \cite{JK95}
  as well as estimates of Dahlberg in \cite{D80} saying that the area integral of a harmonic function in a Lipschitz domain controls its $L^2(\partial \Omega)$ norm.
  This calls into question not only the two articles in question but hundreds of papers that cite them. 
  
 The purpose of this paper is, first of all, to explain the fatal gap in the 
 reasoning of Amrouche and Moussaoui, and, secondly, to give a self-contained, direct 
 proof of a special case of  Dahlberg's theorem that will be sufficient to show that  the central conclusions of C. Amrouche and M. Moussaoui in \cite{AM22}, \cite{AM25}, \cite{AM26} are false for Lipschitz domains, although valid for polygonal domains.  We will also describe
 several other proofs of Dahlberg's theorem, including the general case,
 and show how the ideas involved relate to many 
 fundamental theorems in harmonic analysis from the 1960s to the 1990s.
  For a detailed overview of these issues and an analysis in greater depth
  of harmonic functions on non-smooth domains, see the monumental five-volume treatise \cite{MMM22-23}.

  We begin with two assertions of \cite{AM25} that are correct as stated, but which are qualitative in a way that does not permit generalization from polygons to Lipschitz domains.
  Recall first that the space $H_{00}^{1/2}(\Omega)$ is the space of functions whose extension by zero in $\RR^n \setminus \Omega$ belongs to $H^{1/2}(\RR^n)$.
  The dual space $H_{00}^{1/2}(\Omega)'$ signifies $-1/2$ derivatives, and it is the natural target space for the functions $f$ in equation \eqref{eq:1} if we want to consider left-hand sides $u \in H^{3/2}(\Omega) \cap H_0^1(\Omega)$.
  The first assertion, Theorem 5.4 (ii) \cite[page 41]{AM25} is that for polygons $\Omega$,
  \[
\Delta \colon H^{3/2}(\Omega) \cap H_0^1(\Omega) \to H_{00}^{1/2}(\Omega)'
  \]
  is an isomorphism.  
  To state the second assertion, recall that $H^{-1}(\Omega) = \Delta (H_0^1(\Omega))$, and we defined $R = \Delta(H^2(\Omega) \cap H_0^1(\Omega))$.  
  Corollary 5.5 \cite[page 42]{AM25} says that the complex interpolation space 
  \begin{equation} \label{eq:M-space}
    M(\Omega) : = [R, H^{-1}(\Omega)]_{1/2} \ \ \mbox{is isomorphic to} \ \ 
    H_{00}^{1/2}(\Omega)'. 
\end{equation}
  From that point onward in the paper \cite{AM25}, the norms of $M(\Omega)$ and $H_{00}^{1/2}(\Omega)'$ are considered equivalent with constants that don't depend on anything.
  But the proofs of Theorem 5.4 (ii) and Corollary 5.5 are qualitative, proving an isomorphism exists for each polygon without saying how the isomorphism constants depend on the polygon.
 The examples in \cite{AM25} \cite{JK95} in which one considers Lipschitz domains and $C^1$ domains are based on limits of families of polygons for which the number of sides tends to infinity.
 
  Let us now distinguish some true statements in the work of Amrouche and Moussaoui.
  To clarify the comparisons with \cite{G72} and \cite{AM25}, we will specify which versions of various Sobolev norms we are using and their dependence on the polygon $\Omega$.
  For $H_0^1(\Omega)$, we use the Dirichlet norm
  \begin{align*}
    \|g\|_{H_0^1(\Omega)}^2 \
    &= \ \int\limits_{\Omega} |\nabla g|^2 \, \D x.
  \end{align*}
  We use the dual of this norm for the dual space $H^{-1}(\Omega) = H_0^1(\Omega)'$.
  As already mentioned,  for any bounded open subset of $\RR^n$, the Laplacian is an isometry from $H_0^1(\Omega)$ to $H^{-1}(\Omega)$ provided we use these norms.

  We start out by recalling a standard result.
  \begin{lemma}
    \label{lemma:0.3}
    If   $\Omega$ is a bounded Lipschitz domain in $\RR^n$, then the following inequalities hold with  $C(\Omega) = \diam(\Omega)^2/\pi^2$.
    \begin{enumerate}[label=(\alph*)]
      \item For all $f \in H_0^1(\Omega)$,
        \begin{align*}
          \int\limits_{\Omega} |f|^2 \, \D x \
          &\leq \ C(\Omega) \int\limits_{\Omega} |\nabla f|^2 \, \D x.
        \end{align*}
      \item For all $u \in H^2(\Omega) \cap H_0^1(\Omega)$,
        \begin{align*}
          \int\limits_{\Omega} |\nabla u|^2 \, \D x \
          &\leq \ n\,  C(\Omega) \int\limits_{\Omega} |\nabla^2 u|^2 \, \D x.
        \end{align*}
    \end{enumerate}
  \end{lemma}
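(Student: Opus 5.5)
For part (a) we reduce to the one-dimensional Wirtinger inequality. Recall that if $g$ is absolutely continuous on an interval $[a,b]$ of length $L$ with $g(a)=g(b)=0$, then
\[
\int_a^b |g|^2 \, \D t \ \leq \ \frac{L^2}{\pi^2} \int_a^b |g'|^2 \, \D t .
\]
This follows by expanding $g$ in the sine series $\sum_k b_k \sin(k\pi (t-a)/L)$ and comparing coefficients.

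Given $f \in H_0^1(\Omega)$, we extend it by zero to a function in $H^1(\RR^n)$. By density it suffices to treat $f \in C_c^\infty(\Omega)$, since both sides of (a) are continuous in the $H^1$ norm. Rotate coordinates so that one axis is $x_1$, and let $L \le \diam(\Omega)$ be the width of $\Omega$ in the $x_1$ direction, so that $\Omega$ lies in a slab $\{a < x_1 < a+L\}$. For each fixed $x' = (x_2,\dots,x_n)$, the function $t \mapsto f(t,x')$ vanishes at $t=a$ and $t=a+L$. Applying the one-dimensional inequality on each such line, then integrating in $x'$ by Fubini's theorem and using $|\p_1 f| \le |\nabla f|$, gives (a) with constant $L^2/\pi^2 \le \diam(\Omega)^2/\pi^2 = C(\Omega)$. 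This step is routine.

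For part (b) we cannot apply (a) directly to $\p_j u$, because $\p_j u$ does not vanish on $\partial\Omega$. This is the only real obstacle, and we route around it through $\Delta u$.

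Since $u \in H_0^1(\Omega)$ and $\Delta u \in L^2(\Omega)$, the weak definition of the Laplacian, tested against $u$ itself, gives
\[
\int_\Omega |\nabla u|^2 \, \D x = -\int_\Omega u \, \Delta u \, \D x .
\]
To justify testing against $u$, test against $\varphi \in C_c^\infty(\Omega)$ and pass to the limit $\varphi \to u$ in $H_0^1$; this is legitimate because $\Delta u \in L^2 \subset H^{-1}$. No boundary regularity beyond $u \in H_0^1$ is needed.

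By Cauchy--Schwarz and then part (a),
\[
\|\nabla u\|^2 \le \|u\| \, \|\Delta u\| \le C(\Omega)^{1/2} \|\nabla u\| \, \|\Delta u\| .
\]
Dividing by $\|\nabla u\|$ (the case $\|\nabla u\| = 0$ being trivial) yields $\|\nabla u\|^2 \le C(\Omega) \|\Delta u\|^2$.

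Finally, the pointwise Cauchy--Schwarz inequality gives
\[
(\Delta u)^2 = \Bigl(\sum_{j=1}^n \p_j\p_j u\Bigr)^2 \le n \sum_{j=1}^n (\p_j\p_j u)^2 \le n \, |\nabla^2 u|^2 .
\]
Integrating this over $\Omega$ and combining with the previous bound gives (b) with constant $n\,C(\Omega)$.
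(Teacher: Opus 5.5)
Your proof is correct and follows essentially the same route as the paper: part (a) comes from the one-dimensional sine-series (Wirtinger) inequality on lines in a slab of width at most $\diam(\Omega)$, and part (b) bounds $\|\nabla u\|^2$ by $C(\Omega)\|\Delta u\|^2$ using (a) and then uses $(\Delta u)^2 \le n|\nabla^2 u|^2$. The only difference is cosmetic: the paper writes the middle step of (b) as $\|\nabla u\|_{L^2} = \|\Delta u\|_{H^{-1}(\Omega)} \le C(\Omega)^{1/2}\|\Delta u\|_{L^2}$, via duality and the isometry $\Delta^{-1}\colon H^{-1}(\Omega)\to H_0^1(\Omega)$, whereas you unpack this into the energy identity $\int_\Omega |\nabla u|^2 = -\int_\Omega u\,\Delta u$ followed by Cauchy--Schwarz.
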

\begin{proof}
  Part (a) is a direct corollary of the fact that if $f \in C^2[0, \pi]$, and if $f(0) = f(\pi) = 0$, then $\int_{0}^{\pi} f^2 \, \D x \leq \int_{0}^{\pi} |f'|^2 \, \D x$, which can be proved by expressing $f$ as a sine series.
  To prove part (b), observe that the $H^{-1}(\Omega)$ norm is given by
    \begin{align*}
    \|g\|_{H^{-1}(\Omega)} \
    &= \ \sup\left\{g(f) : f \in H_0^1(\Omega), \int_{\Omega} |\nabla f|^2 \, \D x \leq 1 \right\}.
  \end{align*}
  Therefore, if $g \in L^2(\Omega)$ and $f \in H_0^1(\Omega)$, then part (a) implies
  \begin{align*}
    g(f) \,
    &= \, \int\limits_{\Omega} gf \, \D x \
    \leq \, \|g\|_{L^2(\Omega)} \|f\|_{L^2(\Omega)} \,
    \leq \,  C(\Omega)^{1/2} \|g\|_{L^2(\Omega)} \left(\int_{\Omega} |\nabla f|^2 \, \D x \right)^{1/2}.
      \end{align*}
  Hence,  $    \|g\|_{H^{-1}(\Omega)} \ \leq \ C(\Omega)^{1/2} \|g\|_{L^2(\Omega)}$.     Furthermore, since the solution operator of $\Delta^{-1}$ is an isometry from $H^{-1}(\Omega)$ to $H_0^1(\Omega)$, we have, 
    \begin{align*}
    \int\limits_{\Omega} |\nabla u|^2 \, \D x \
    &= \ \|\Delta u\|^2_{H^{-1}(\Omega)} \
    \leq \ C(\Omega) \|\Delta u \|_{L^2(\Omega)}^2 \ 
\leq    \,  n \, C(\Omega) \int\limits_{\Omega} |\nabla^2 u|^2 \, \D x,
  \end{align*}
 for all $u\in H^2(\Omega)\cap H^1_0(\Omega)$.
\end{proof}
  The constants in Lemma \ref{lemma:0.3} account for the extra factors depending on the diameter in the discussion of \cite{AM25}.
  If we use the norm on $H^2(\Omega) \cap H_0^1(\Omega)$ that includes a lower order term, Theorem \ref{thm:0.2} of Grisvard and its restatement in \cite[Theorem 5.3]{AM25} say that, for all $u \in H^2(\Omega) \cap H_0^1(\Omega)$,
  \begin{align*}
    \|u\|_{H^2(\Omega)} \
    &\leq \ C \|\Delta u\|_{L^2(\Omega)},
  \end{align*}
  with $C$ depending only on the diameter of the polygon $\Omega$.

 Since the Laplacian is an isometry for the endpoint norms in \eqref{eq:2} and Theorem \ref{thm:0.2}, complex interpolation implies that for polygons $\Omega$, and for all $u \in H_0^1(\Omega)$ such that $\Delta u \in M(\Omega)$ (the interpolation space defined in \eqref{eq:M-space}),   we have $u \in H^{3/2}(\Omega)$. Indeed, we have an isometry 
  \begin{align*}
    \|u\|_{H^{3/2}(\Omega)} \
    &= \ \|\Delta u\|_{M(\Omega)},
  \end{align*}
  provided we choose for $H^{3/2}(\Omega)$ the $1/2$ complex interpolation norm between the $\| \cdot \|_{\ast}$ norm on $H^2(\Omega) \cap H_0^1(\Omega)$ and the Dirichlet norm on $H_0^1(\Omega)$.
  In the work of \cite{G72} and \cite{AM25}, using lower order terms, one still has the inequality in the interesting direction,
  \begin{align*}
    \|u\|_{H^{3/2}(\Omega)} \
    &\leq \ C \|\Delta u\|_{M(\Omega)},
  \end{align*}
  for a constant $C$ that depends only on the diameter of the polygon $\Omega$.
  This statement, Proposition 5.6 in \cite{AM25}, is correct.
  But in all the subsequent discussion, the two spaces $M(\Omega)$ and $H_{00}^{1/2}(\Omega)'$ are considered to have equivalent norms, without acknowledging that the constants that control the equivalence may vary with the polygon.

 So far our discussion pinpoints a gap in the argument in \cite{AM25}, but does not yet demonstrate that the conclusions are false.
  But the further deductions of \cite{AM25} contradicting the work of Dahlberg \cite{D80} do show that all the consequences in \cite{AM25} concerning Lipschitz domains are incorrect, since Dahlberg's result is correct. 
  
  Our main remaining task is to 
give a complete, self-contained proof of Dahlberg's estimate in \cite{D80} for domains in the plane above the graph of a Lipschitz function $\varphi$ whose Lipschitz norm is small enough.  As we shall see in Theorem 3.1, this special case suffices to show that the approach using Grisvard's estimates \cite{G72} cannot yield the false borderline Sobolev estimates\footnote{The counterexample to the borderline Sobolev estimates is already stated and proved in \cite{JK95}, but in the more general context of $C^1$ domains and in the midst of many other results.}  for the inhomogeneous Dirichlet problem claimed in \cite{AM22}, \cite{AM25}, \cite{AM26}.   
Our proof of the small-constant planar case of Dahlberg's theorem depends on well known properties of conformal mappings and classical estimates from 1970 and earlier.  In particular it does not depend on the ``good lambda'' inequalities used in \cite{D80}.

  We will also provide two other proofs of Dahlberg's estimates.
  One of them is still for domains in the plane above the graph of a Lipschitz function $\varphi$, but with arbitrary Lipschitz norm.
  This proof, in the Appendix, is from \cite{Ke80}.  It relies on the theory of weighted norm inequalities on Euclidean space, a topic at the forefront of harmonic analysis in the 1970's.
  Specifically, it relies on weighted norm inequalities for square functions, proved by R. Gundy and R. Wheeden \cite{GW74}.
  Besides the work of Gundy-Wheeden, this proof still depends on the properties of the conformal mapping, including an  estimate that originates in the breakthrough work of A. P. Calder\'on \cite{C77} on the boundedness of the Cauchy integral operator.
  The Gundy-Wheeden work relies, in turn, on the technique of good lambda inequalities, imported from probability to harmonic analysis in pioneering works of Burkholder, Gundy, and others.
  The third proof we provide works for any bounded Lipschitz domain in $\RR^n$, $n \geq 2$.
  It does not use conformal mappings or good lambda inequalities.
  This proof depends instead on  an ``adapted distance function'', first constructed 
  in a complicated way by  Dahlberg (unpublished).
  Here, we follow \cite{DKPV97}, which gives a simpler construction of
  the adapted distance function due to   Kenig-Stein.
    In \cite{DKPV97}, a very general version of Dahlberg's square function estimates is obtained in the setting of higher order constant coefficient elliptic systems.
    The introduction of \cite{DKPV97} sketches a proof for the Laplacian, omitting some technical details.   The body of the paper \cite{DKPV97} gives the full proof in full generality.  We present here the full proof but only for the second-order Laplace operator.  As with the other proofs, our hope is that by reducing the generality,
 we make this proof more accessible, as compared to the original source.

  Besides the three proofs presented in this paper, at least two more proofs of Dahlberg's square function estimates are available in the literature. One is, of course,
  Dahlberg's original proof \cite{D80} using the technique of good lambda inequalities combined with Dahlberg's estimates for harmonic measure on Lipschitz domains (\cite{D77}, \cite{D79}).
  Another proof is due to Fabes, Mendez, and M. Mitrea \cite{FMM98}.
  This proof uses layer potentials and Verchota's work \cite{V84} on the invertibility of layer potentials on Lipschitz domains.
  As we hope that these remarks make clear, Dahlberg's area integral estimates appear in many different contexts and are intertwined with many other estimates.
   The fact that their proof can be obtained from a number of different perspectives is a clear indication of their value.

  \section{Preliminary results from classical harmonic analysis}
    Denote $\RR_{+}^{n + 1} = \{(x, y) \in \RR^n \times \RR : y > 0\}$.
    For $x \in \RR^n$, $X = (x, 0) \in \partial \RR_{+}^{n + 1}$, define the non-tangential cone over $X$ by
    \begin{align*}
      \Gamma(X) \
      &= \ \left\{Y = (x', y) \in \RR^n \times \RR : |x' - x| < y \right\}.
    \end{align*}
    \begin{defn}
      For functions $w$ (real, complex, or vector-valued) defined on $\RR_{+}^{n + 1}$, the non-tangential maximal function of $w$ is defined by
      \begin{align*}
        N(w)(x) \
        &= \ \sup_{Y \in \Gamma(X)} |w(Y)|, \quad X = (x, 0).
      \end{align*}
    \end{defn}
    The Euclidean space analogue of theorems of Hardy and Littlewood is as follows (we will only need the case $p = 2$).
    \begin{thm}[{\cite[prop page 57]{S93}}]
      \label{thm:1.2}
      Consider the Poisson kernel
      \begin{align*}
        P_y(x) \
        &= \ \frac{c_n y}{(|x|^2 + y^2)^{n + 1}} \
        = \ (1/y^n) P_1(x/y), \quad x \in \RR^n, y > 0.
      \end{align*}
      Fix $1 < p \leq \infty$.
      If $f \in L^p(\RR^n)$, then the convolution
      \begin{align*}
        u(x, y) \
        &= \ P_y \ast f(x), \quad (x, y) \in \RR^n \times \RR_{+}
      \end{align*}
      is harmonic in the upper half space and has non-tangential limit equal to $f$:
      \begin{align*}
        \lim_{Y \to X} u(Y) \
        &= \ f(x), \text{ restricted to } Y \in \Gamma(X),
      \end{align*}
      for almost every $X = (x, 0) \in \RR^n \times \RR_{+}$ with respect to Lebesgue measure $\D x$.
      Moreover, 
      \begin{align*}
        \|N(u)\|_{L^p(\RR^n)} \
        &\leq \ C_{n, p} \|f\|_{L^p(\RR^n)}
      \end{align*}
      for a constant $C_{n, p}$ depending only on $n$ and $p$.
    \end{thm}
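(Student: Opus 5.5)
The plan is the standard one: compare the non-tangential maximal function with the Hardy--Littlewood maximal function, then invoke the Hardy--Littlewood maximal theorem. (The normalization in the statement should read $P_y(x) = c_n y/(|x|^2+y^2)^{(n+1)/2}$, with $c_n$ chosen so that $\int P_1 = 1$; this is the form used below.)

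First we dispose of the qualitative facts. Since $P_1 \in L^{p'}(\RR^n)$ for every $p' \ge 1$, the convolution $P_y \ast f$ converges absolutely for $f \in L^p$. The function $(x,y) \mapsto P_y(x - x')$ is harmonic in $\RR^{n+1}_+$; one checks this directly, or notes that it is a constant multiple of $\partial_y$ of the fundamental solution. Differentiation under the integral is justified by local uniform bounds on the derivatives of $P_y(x-x')$ in $x'$, for $(x,y)$ in compact subsets of the half space. Hence $u$ is harmonic.

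The key step is a pointwise bound. For $(x',y) \in \Gamma(X)$ we have $|x'-x|<y$, so $|x - z| \le |x'-z| + y$ and therefore $|x'-z|^2 + y^2 \ge \tfrac12 (|x-z|^2 + y^2)$. Thus
\[
P_y(x'-z) \le 2^{(n+1)/2} P_y(x-z) \qquad \text{for } (x',y)\in\Gamma(X).
\]
The function $P_y(x - \cdot)$ is radial, decreasing and integrable with integral $1$. Writing it as a superposition of normalized indicator functions of balls centered at $x$ (the layer-cake decomposition), we get
\[
|u(x',y)| \le 2^{(n+1)/2}\,(P_y\ast|f|)(x) \le 2^{(n+1)/2} M f(x),
\]
where $Mf$ is the Hardy--Littlewood maximal function. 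Hence $N(u) \le C_n Mf$. The maximal theorem ($M$ is weak $(1,1)$ and bounded on $L^\infty$, hence bounded on $L^p$ for $1<p\le\infty$ by Marcinkiewicz interpolation) then gives $\|N(u)\|_{L^p} \le C_{n,p}\|f\|_{L^p}$.

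For the non-tangential limit we use the usual density argument. For $g$ continuous with compact support, $u_g(Y) \to g(x)$ as $Y \to X$, by the approximate identity property: $P_y$ has integral $1$ and concentrates at the origin as $y \to 0$, and the shift by $|x'-x|<y$ is harmless by uniform continuity. For general $f \in L^p$ (with $p<\infty$), write $f = g + h$ with $\|h\|_p < \varepsilon$. Then
\[
\limsup_{Y\to X,\ Y\in\Gamma(X)} |u(Y)-f(x)| \le C\,Mh(x) + |h(x)|,
\]
and the weak-type bound for $M$ shows that the set where this exceeds $\lambda$ has measure $\lesssim (\varepsilon/\lambda)^p$. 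Letting $\varepsilon\to0$ gives the limit almost everywhere. For $p=\infty$ we localize: on a ball $B$, split $f = f\chi_{2B} + f\chi_{(2B)^c}$. The first piece is in $L^2$. The second piece gives a harmonic function that tends to $0$ non-tangentially at points of $B$, because $P_y(x'-z) \lesssim y\,|x-z|^{-n-1}$ for $z$ far from $x$.

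The main obstacle is only the comparison $N(u) \lesssim Mf$, that is, the cone-geometry inequality together with the radial-decreasing majorization. Everything else is routine once the maximal theorem is assumed.
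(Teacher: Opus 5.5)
Your proof is correct and takes the same route as the source the paper cites for this result (the paper gives no argument of its own beyond \cite[p.~57]{S93}): you dominate $N(u)$ by the Hardy--Littlewood maximal function through a radial decreasing majorant, apply the maximal theorem, and use a density argument for the almost-everywhere limits. You are also right that the exponent in the Poisson kernel should be $(n+1)/2$. One harmless slip: from $|x-z|\le |x'-z|+y$ you only get $|x'-z|^2+y^2\ge \tfrac13\bigl(|x-z|^2+y^2\bigr)$, not $\tfrac12$ (take $|x'-z|=y$ and $|x-z|$ close to $2y$), so your constant becomes $3^{(n+1)/2}$, which changes nothing in the argument.
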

      
    A partial converse to Theorem \ref{thm:1.2} is as follows.
    \begin{thm}[{\cite[Theorem 1, p. 197; Corollary p. 200]{S70}}] \label{thm:1.3}
      Suppose that $u$ is harmonic in $\RR_{+}^{n + 1}$ and $1 < p \leq \infty$.
      If
      \begin{align*}
        \sup_{y > 0} \|u(\cdot, y)\|_{L^p(\RR^n)} < \infty,
      \end{align*}
      then $u = P_y \ast f$ for some $f \in L^p(\RR^n)$ and $u$ converges non-tangentially to $f$ almost everywhere $\D x$.
      Furthermore, 
      \begin{align*}
        \|f\|_{L^p(\RR^n)} \
        &= \ \sup_{y > 0} \|u(\cdot, y)\|_{L^p(\RR^n)}.
      \end{align*}
    \end{thm}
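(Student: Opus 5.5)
The plan is to follow the classical route in Stein's book: weak-$*$ compactness to produce $f$, the Poisson representation to identify $u$, and then Theorem \ref{thm:1.2} for non-tangential convergence.

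\emph{Step 1: reduction to the translated functions.} Set $M = \sup_{y>0}\|u(\cdot,y)\|_{L^p}$. For each $\varepsilon>0$ put $f_\varepsilon = u(\cdot,\varepsilon)$, so that $\|f_\varepsilon\|_{L^p}\le M$. The key claim is
\begin{align*}
u(x,y+\varepsilon) \ = \ P_y \ast f_\varepsilon(x), \qquad y>0 .
\end{align*}
To prove it, compare $U_\varepsilon(x,y) = u(x,y+\varepsilon)$ with $V_\varepsilon = P_y\ast f_\varepsilon$. First I show that $U_\varepsilon$ is bounded and continuous on $\{y\ge 0\}$. By the mean value property over a ball of radius $\varepsilon/2$ and H\"older's inequality,
\begin{align*}
|u(x,y)| \ \le \ C\varepsilon^{-(n+1)/p}\,\Bigl(\int_{y-\varepsilon/2}^{y+\varepsilon/2}\|u(\cdot,t)\|_{L^p}^p\,\D t\Bigr)^{1/p} \ \le \ C_\varepsilon M \qquad \text{for } y\ge \varepsilon .
\end{align*}
The same averaging argument shows that $u(\cdot,y)\to 0$ as $|x|\to\infty$, uniformly for $y$ in compact subsets of $[\varepsilon,\infty)$. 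On the other side, $V_\varepsilon$ is bounded with boundary values $f_\varepsilon$, which are continuous.

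The difference $W = U_\varepsilon - V_\varepsilon$ is harmonic, bounded, continuous up to $y=0$, and vanishes there. Reflect $W$ oddly across $y=0$. The result is a bounded harmonic function on $\RR^{n+1}$, so by Liouville it is constant, and since it vanishes on $y=0$ it is identically $0$. This Liouville step is where boundedness is essential; when $p=\infty$ the uniform bound is immediate.

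\emph{Step 2: weak-$*$ limit.} For $1<p\le\infty$, $L^p$ is the dual of $L^{p'}$, so some sequence $f_{\varepsilon_k}\to f$ weak-$*$, with $\|f\|_{L^p}\le M$. For fixed $(x,y)$ we have $P_y(x-\cdot)\in L^{p'}$. Passing to the limit in Step 1 therefore gives $u(x,y) = P_y\ast f(x)$, using the continuity of $u$ in $y$.

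\emph{Step 3: boundary values and the norm.} Theorem \ref{thm:1.2} now gives non-tangential convergence of $u$ to $f$ almost everywhere. For the norm identity, Minkowski's inequality gives $\|P_y\ast f\|_{L^p}\le\|f\|_{L^p}$, hence $M\le\|f\|_{L^p}$, while Step 2 gave $\|f\|_{L^p}\le M$. Hence $\|f\|_{L^p}=M$.

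The main obstacle is Step 1, specifically justifying the uniqueness via decay and boundedness on the shifted domain. I will handle it as above: the mean-value estimate on slabs $y\ge\varepsilon$ gives the boundedness needed for odd reflection and Liouville.
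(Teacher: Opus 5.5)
Your proposal is correct and is essentially the classical argument in the cited source \cite{S70}; the paper itself gives no proof beyond the citation. The argument runs: uniform boundedness on $\{y\ge\varepsilon\}$ via the mean value property, odd reflection plus Liouville to get $u(\cdot,\cdot+\varepsilon)=P_y\ast u(\cdot,\varepsilon)$, weak-$*$ compactness in $L^p=(L^{p'})^*$, and Theorem~\ref{thm:1.2}. Your aside that $u(\cdot,y)\to 0$ as $|x|\to\infty$ is false for $p=\infty$ (e.g.\ $u\equiv 1$), but you never use it, so the proof is unaffected.
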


    We will be particularly interested in the special case of the upper half-plane ($n =1$). 
    The Hardy space $\cH^2(\RR_{+}^2)$ is the space of analytic functions $F$ such that
    \begin{align*}
      \|F\|_{\cH^2(\RR_{+}^2)} \
      &\coloneqq \ \sup_{y > 0} \|F(\cdot, y)\|_{L^2(\RR)} \
      < \ \infty.
    \end{align*}
    Since analytic functions are harmonic, Theorem \ref{thm:1.3} implies $F(z)$ is the convolution by the Poisson kernel of its (non-tangential) boundary values, denoted $F(x)$, $x\in \RR$.      Since convolution with the Poisson kernel is a contraction on $L^2(\RR)$, there is a Hilbert space isometry between the Hardy space and the boundary space $L^2(\RR)$\footnote{If we decompose $F$ into real and imaginary parts $F(z) = u(z) + \i v(z)$, then $v$ is the harmonic conjugate of $u$, and the boundary value $v(x)$ is the Hilbert transform of $u(x)$. Since the Hilbert transform is an isometry on $L^2(\RR)$, 
    \begin{align*}
      \frac{1}{2} \int\limits_{\RR} \left(u(x)^2 + v(x)^2 \right) \, \D x \
      &= \ \int\limits_{\RR} u(x)^2 \, \D x \
      = \ \int\limits_{\RR} v(x)^2 \, \D x.
    \end{align*}}
    \begin{align}\label{eq:Hardy}
      \|F\|_{\cH^2(\RR_{+}^2)}^2 \
      &= \ \int\limits_{\RR} |F(x)|^2 \, \D x \
      = \ \|F\|_{L^2(\RR)}.
    \end{align}
    Furthermore, by Theorem \ref{thm:1.2}, there is an absolute constant $C$ such that
    \begin{align}\label{eq:Hardy-max}
      \|N(F)\|_{L^2(\RR)} \
      &\leq \ C \|F\|_{L^2(\RR)}.
    \end{align}

  Returning to the case of general dimensions, consider  a cube $Q \subset \RR^n$ of side length $r$, and denote by $T(Q)$ the Carleson box in the upper half space
    \begin{align*}
      T(Q) \
      &= \ Q \times \hlint{0}{r} \subset \RR_{+}^{n + 1}.
    \end{align*}
    \begin{defn}
      A non-negative measure $\mu$ on the upper half space $\RR_{+}^{n + 1}$ is called a Carleson measure if its Carleson measure norm, $\|\mu\|_{CM}$, is finite:
      \begin{align*}
        \|\mu\|_{CM} \
        &\coloneqq \ \sup_{Q} \frac{\mu(T(Q))}{|Q|} < \infty, \quad (|Q| = r^n \text{ is the volume of the cube } Q).
      \end{align*}
    \end{defn}

    Carleson's theorem (\cite{Ca58}; see also \cite[Theorem 2, p. 59]{S93}) is as follows.
    \begin{thm}
      \label{thm:1.5}
      If $f$ is a measurable function on $\RR_{+}^{n + 1}$ and $\mu$ is a Carleson measure, then
      \begin{align*}
        \int\limits_{\RR_{+}^{n + 1}} |f|^2 \, \D \mu \
        &\leq \ C_n \|\mu\|_{CM} \int\limits_{\RR^n} N(f)^2 \, \D x,
      \end{align*}
      where $C_n$ is a dimensional constant.
    \end{thm}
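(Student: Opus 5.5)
We prove Carleson's theorem through the layer-cake formula. The idea is to compare the sets where $|f|$ is large with the sets where $N(f)$ is large, using a covering by Carleson boxes. It suffices to show, for every $\lambda>0$,
\begin{align*}
  \mu\bigl(\{Y \in \RR_{+}^{n+1} : |f(Y)| > \lambda\}\bigr) \ \leq \ C_n \|\mu\|_{CM} \, \bigl|\{x \in \RR^n : N(f)(x) > \lambda\}\bigr| .
\end{align*}
Multiplying by $2\lambda$ and integrating in $\lambda$ over $(0,\infty)$ then gives the stated inequality. We may assume $N(f)\in L^2$; otherwise there is nothing to prove.

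\emph{Step 1: the superlevel set of $|f|$ lies in a tent over the superlevel set of $N(f)$.} Fix $\lambda$ and set $E_\lambda = \{N(f)>\lambda\}$. The map $Y\mapsto |f(Y)|$ need not be continuous, but $E_\lambda$ is open anyway. Indeed, if $|f(x',y)|>\lambda$, then every $x$ with $|x-x'|<y$ lies in $E_\lambda$, so $E_\lambda$ is a union of open balls. The same observation shows that whenever $|f(x',y)|>\lambda$, the ball $B(x',y)\subset \RR^n$ is contained in $E_\lambda$. Hence
\[
\{|f|>\lambda\} \subset \{(x',y) : B(x',y)\subset E_\lambda\}.
\]
We may assume $|E_\lambda|<\infty$; otherwise the inequality is trivial.

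\emph{Step 2: cover the tent by Carleson boxes.} We take a Whitney decomposition of the open set $E_\lambda$ into disjoint dyadic cubes $Q_k$ with $\diam Q_k \leq \dist(Q_k, E_\lambda^c) \leq 4\diam Q_k$. Let $(x',y)$ satisfy $B(x',y)\subset E_\lambda$, and let $Q_k$ be the Whitney cube containing $x'$. Then $y \leq \dist(x',E_\lambda^c) \leq 5\diam Q_k = 5\sqrt n\, \ell(Q_k)$. Consequently $(x',y)\in T(Q_k^*)$, where $Q_k^*$ is the cube concentric with $Q_k$ of side $c_n\ell(Q_k)$ with $c_n = 10\sqrt n+1$. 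Summing over $k$,
\[
\mu(\{|f|>\lambda\}) \ \leq\ \sum_k \mu(T(Q_k^*)) \ \leq\ \|\mu\|_{CM}\sum_k c_n^n |Q_k| \ =\ c_n^n\|\mu\|_{CM}\,|E_\lambda|,
\]
using that the $Q_k$ are disjoint and contained in $E_\lambda$. This is the desired distributional inequality.

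\emph{Step 3: integrate.} We have $\int |f|^2\,\D\mu = \int_0^\infty 2\lambda\,\mu(\{|f|>\lambda\})\,\D\lambda$. Applying Step 2 and the layer-cake formula for $N(f)^2$ gives the theorem with $C_n=c_n^n$. Measurability of $f$ makes $\{|f|>\lambda\}$ measurable, which is all the first identity needs.

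The main obstacle is the geometric Step 2. We must check that every point in the tent sits inside a dilated Carleson box of a Whitney cube with a uniform dilation factor, so that the Carleson condition can be applied cube by cube with controlled overlap. An alternative route is the Vitali covering lemma applied to the balls $B(x',y)$ with $|f(x',y)|>\lambda$, but the Whitney decomposition avoids any issue with the family of balls having unbounded radii.
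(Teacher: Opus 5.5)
Your argument is correct. The inclusion $\{|f|>\lambda\}\subset\{(x',y): B(x',y)\subset E_\lambda\}$, the covering of this tent by the boxes $T(Q_k^*)$ over Whitney cubes of the open set $E_\lambda$, and the layer-cake integration all check out; any dilation factor $c_n\ge 5\sqrt{n}$ would suffice. The paper does not prove Theorem~\ref{thm:1.5} itself but cites Carleson and \cite[Theorem 2, p.~59]{S93}, and your route (a distributional inequality obtained from tents over $\{N(f)>\lambda\}$ and a Whitney decomposition) is essentially the standard proof in that reference.
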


    \begin{defn}
      The space of functions of bounded mean oscillation, $\BMO(\RR^n)$, is defined as the set of $f \in L_{loc}^1(\RR^n)$ whose BMO norm, $\|f\|_{\BMO}$, is finite:
      \begin{align*}
        \|f\|_{\BMO} \
        &\coloneqq \ \sup_{Q} \frac{1}{|Q|} \int\limits_{Q} \left|f(x) - f_Q \right| \, \D x \
        < \ \infty, \text{ with } f_Q \
        = \ \frac{1}{|Q|} \int\limits_{Q} f(x) \, \D x.
      \end{align*}
    \end{defn}
    Note that constant functions have BMO norm zero.
    The following relationship between BMO and Carleson measures was proved by C. Fefferman \cite[Theorem 3, p. 159]{S93}.
    
    \begin{thm}
      \label{thm:1.7}
      If $f \in \BMO(\RR^n)$ and $u(x, y) = P_y \ast f(x)$, then
      \begin{align*}
        \D \mu(x, y) \
        &= \ y |\nabla u(x, y)|^2 \, \D x \, \D y
      \end{align*}
      is a Carleson measure on $\RR_{+}^{n + 1}$ and 
      \begin{align*}
        \|\mu\|_{CM} \
        &\leq \ C_n \|f\|_{\BMO(\RR^n)}^2
      \end{align*}
      for some dimensional constant $C_n$.
    \end{thm}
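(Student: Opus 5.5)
The plan is the classical Fefferman argument, which establishes Theorem \ref{thm:1.7} from a single $L^2$ identity applied locally. By dilation and translation invariance (both $\|f\|_{\BMO}$ and the ratio $\mu(T(Q))/|Q|$ are invariant under $x\mapsto \lambda x + x_0$, $y \mapsto \lambda y$), it suffices to bound $\mu(T(Q))$ for one fixed cube $Q$ of side length $r$. Since constants have zero BMO norm and do not change $\nabla u$, we may replace $f$ by $f - f_{Q^*}$, where $Q^*$ is the concentric cube with side $3r$ (or $2\sqrt n\, r$, chosen so that it contains the relevant part of the cones).

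We then split $f - f_{Q^*} = f_1 + f_2$ with $f_1 = (f-f_{Q^*})\chi_{Q^*}$, and write $u = u_1 + u_2$ accordingly.

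For $u_1$, we use the $L^2$ identity
\begin{align*}
\int_{\RR^{n+1}_+} y\,|\nabla (P_y*g)|^2\,\D x\,\D y = c_n\,\|g\|_{L^2(\RR^n)}^2 ,
\end{align*}
which is proved via Plancherel: $\widehat{P_y*g}(\xi)=e^{-2\pi y|\xi|}\hat g(\xi)$, so both the $x$-gradient and the $y$-derivative contribute $|\xi|^2 e^{-4\pi y|\xi|}|\hat g|^2$, and $\int_0^\infty y|\xi|^2e^{-4\pi y|\xi|}\,\D y$ is a constant. This gives
\begin{align*}
\int_{T(Q)} y|\nabla u_1|^2 \le c_n \int_{Q^*}|f-f_{Q^*}|^2\,\D x .
\end{align*}
The right-hand side is bounded by $C|Q|\,\|f\|_{\BMO}^2$ by the John--Nirenberg inequality, which gives the $L^2$ mean oscillation bound in terms of the $L^1$ one. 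I would invoke it as known; if it must be avoided, we would need a more elaborate argument.

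For $u_2$, on $T(Q)$ we bound $|\nabla u_2(x,y)| \le \int |\nabla_{x,y}P_y(x-t)|\,|f_2(t)|\,\D t$. Since $|x-t|\gtrsim r$ for $x\in Q$, $t\notin Q^*$, we have $|\nabla P_y(x-t)|\lesssim |x-t|^{-n-1}$, where $x_Q$ denotes the center of $Q$ and we use $|x-t|\approx r+|t-x_Q|$. Decomposing into dyadic annuli $2^{k}Q^*\setminus 2^{k-1}Q^*$ and using the standard estimate $|f_{2^kQ^*}-f_{Q^*}|\lesssim k\|f\|_{\BMO}$ gives
\begin{align*}
|\nabla u_2|\lesssim \sum_k (2^kr)^{-1}(1+k)\|f\|_{\BMO}\lesssim r^{-1}\|f\|_{\BMO}.
\end{align*}
Hence $\int_{T(Q)}y|\nabla u_2|^2\lesssim r^{-2}\|f\|_{\BMO}^2\,|Q|\int_0^r y\,\D y\lesssim |Q|\,\|f\|_{\BMO}^2$.

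Combining the two pieces via $|\nabla u|^2\le 2|\nabla u_1|^2+2|\nabla u_2|^2$ completes the argument. A technical point is that $P_y*f$ needs to be well defined for $f\in\BMO$; this follows from $\int|f-f_{Q}|(1+|t|)^{-n-1}\,\D t<\infty$, which is a consequence of the same dyadic annulus estimate. The main obstacle is the local $L^2$ step, which relies on John--Nirenberg; everything else is routine kernel estimates.
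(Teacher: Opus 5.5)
Your argument is correct. It is the standard Fefferman--Stein proof: you split $f-f_{Q^*}$ into $(f-f_{Q^*})\chi_{Q^*}$ and its complement, handle the local piece with the global $L^2$ identity for $y|\nabla(P_y*g)|^2$ together with John--Nirenberg, and handle the far piece with the pointwise bound $|\nabla u_2|\lesssim r^{-1}\|f\|_{\BMO}$ on $T(Q)$. The paper does not prove Theorem~\ref{thm:1.7} itself but cites this same argument in Stein's book. One cosmetic remark: any fixed dilate such as $Q^*=3Q$ works, since all you use is $\dist(Q,\RR^n\setminus Q^*)\gtrsim r$, and no cones enter this step.
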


    \begin{prop}[Green's formula]
      \label{prop:1.8}
      Suppose that $f$ is twice continuously differentiable in the closure of $\RR_{+}^{n + 1}$, and for some $\epsilon > 0$,
      \begin{align*}
        |f(x, y)| + y|\nabla f(x, y)| \
        &= \ O(|(x, y)|^{-n - \epsilon}), \quad \mbox{as} \ \ |(x,y)| \to \infty.
      \end{align*}
      Then
      \begin{align*}
        \int\limits_{\RR_{+}^{n + 1}} y \Delta f(x, y) \, \D x \, \D y \
        &= \ \int\limits_{\RR^n} f(x, 0) \, \D x.
      \end{align*}
      \begin{proof}
        Apply Green's formula in the half ball $\{|(x, y)| < R\} \cap \RR_{+}^{n + 1}$ to the integrand
        \begin{align*}
          y \Delta f \
          &= \ y \Delta f - f \Delta y,
        \end{align*}
        and take the limit as $R \to \infty$.
      \end{proof}
    \end{prop}

    We will now specialize to the case of planar domains, $n = 1$, and state some classical properties of conformal mappings.
    Consider a Lipschitz function
    \begin{align*}
      \varphi \colon \RR \to \RR, \quad |\varphi(\xi) - \varphi(\xi')| \
      \leq \ L |\xi - \xi'| \
      < \ \infty, \quad \xi, \xi' \in \RR,
    \end{align*}
    and the unbounded Lipschitz domain above its graph,
    \begin{align*}
      \Omega \
      &\coloneqq \ \left\{\zeta = \xi + \i \eta \in \CC : \eta > \varphi(\xi) \right\}.
    \end{align*}
    The upper half plane $\RR_{+}^2$ will be viewed as the complex upper half plane, that is,
    \begin{align*}
      \RR_{+}^2 \
      &= \ \left\{x + \i y : y > 0 \right\}.
    \end{align*}
    Fix $\zeta_0 \in \Omega$ and consider the unique conformal mapping 
    \begin{align*}
      \Phi \colon \RR_{+}^2 \to \Omega \text{ such that } \Phi(\i) = \zeta_0 \text{ and } \Phi(\infty) = \infty.
    \end{align*}
    $\Phi$ extends to a homeomorphism of the closure of $\RR_{+}^2$ to the closure of $\Omega$.
    $\Phi'$ has non-tangential limits for almost every $x \in \RR = \partial \RR_{+}^2$ and so does the inverse mapping $\Psi = \Phi^{-1}$ mapping $\Omega$ to $\RR_{+}^2$.
    $\Phi$ and $\Psi$ preserve angles in the interior, and sets of measure zero of $\RR$ correspond to sets of measure zero of $\partial \Omega$ (see, for example, \cite{Ke80}, \cite{Z68}).

    Here are the properties of the conformal mapping that will make it possible to transplant estimates on $\Omega$ to estimates on $\RR_{+}^2$.

    \begin{thm}
      \label{thm:1.9}
      If $\Phi$ maps $\RR_{+}^2$ to the region above a Lipschitz graph $\Omega$ with constant $L$, and $\Phi(\infty) = \infty$, then $\Phi(x), x \in \RR$ is absolutely continuous, and $\Phi'(x)$ equals the non-tangential limit of $\Phi'(z)$ almost everywhere $\D x$.
      Furthermore, 
      \begin{enumerate}[label=(\alph*)]
        \item The functions $\log \Phi'(z)$ and $\Phi'(z)^r, r \in \RR$, are well-defined (single-valued) on $\RR_{+}^2$.
        \item $|\arg \Phi'(z)| \leq \arctan L$ for all $z \in \RR_{+}^2$.
        \item The measure
          \begin{align*}
            \D \mu \
            &= \ y \frac{|\Phi''(z)|^2}{|\Phi'(z)|^2} \, \D x \, \D y \
            = \ y |(\log \Phi'(z))'|^2 \, \D x \, \D y
          \end{align*}
          is a Carleson measure with norm
          \begin{align*}
            \|\mu\|_{CM} \
            &\leq \ C \arctan L
          \end{align*}
          for some absolute constant $C$.
        \item The measure $w(x) \, \D x$ with $w(x) = |\Phi'(x)|, x \in \RR$ belongs to the Muckenhoupt class $A_2$,\footnote{For a discussion of the properties of $A_2$ measures, see \cite[p. 194]{S93}.} that is, for all intervals $I \subset \RR$,
          \begin{align*}
            \left(\frac{1}{|I|} \int_{I} w \, \D x \right) \left(\frac{1}{|I|} \int_{I} w^{-1} \, \D x \right) \
            &\leq \ C(L)
          \end{align*}
          for a constant $C(L)$ depending on $L$.
      \end{enumerate}
      \begin{proof}
        The proofs are in \cite[Section 1]{Ke80} with the exception of part (c).
        We repeat the main ideas here to keep this presentation self-contained.

        It is well-known that $\Phi$ restricted to the real axis is absolutely continuous and that $\Phi'(x)$ equals the non-tangential limit of $\Phi'(z)$ almost everywhere $\D x$. (For any $\zeta_1 \not\in \overline{\Omega}$, the mapping $\zeta \mapsto 1/(\zeta - \zeta_1)$ sends $\Omega$ to a bounded domain with rectifiable boundary. In the rectifiable case, these properties are proved in \cite[Theorems 10.15 and 10.17, pp. 293-294]{Z68}.)
        At every point where the derivative exists, $\Phi'(x)$ is the vector tangent to $\partial \Omega$.
        Hence,
        \begin{align*}
          \left|\arg \Phi'(x) \right| \
          &\leq \ \arctan L \
          < \ \pi/2 \ \text{ a.e. } \D x.
        \end{align*}
        Moreover, $\arg \Phi'(z)$ is the Poisson integral of its boundary values and satisfies the maximum principle:
        \begin{align*}
          \left|\arg \Phi'(z) \right| \
          &\leq \ \arctan L, \quad z \in \RR_{+}^2.
        \end{align*}
        In particular, $\Phi'(z) \neq 0$, and $\log \Phi'(z)$ and $\Phi'(z)^r, r \in \RR$, are well-defined and analytic for $z \in \RR_{+}^2$.

        To prove part (c), set
        \begin{align*}
          g(z) \
          &= \ \log |\Phi'(z)|, \quad h(z) \
          = \ \arg \Phi'(z).
        \end{align*}
        Then $g$ restricted to $\RR$ is the Hilbert transform of $-h$ restricted to $\RR$.
        The Hilbert transform is bounded from $L^{\infty}(\RR)$ to $\BMO(\RR)$ \cite[6.16(b), p. 164]{S70}, so there is an absolute constant $C$ for which 
        \begin{align*}
          \|g\|_{\BMO(\RR)} \
          &\leq \ C \|h\|_{L^{\infty}(\RR)} \
          \leq \ C \arctan L.
        \end{align*}
        The measure
        \begin{align*}
          \D \mu \
          &= \ y \frac{|\Phi''(z)|^2}{|\Phi'(z)|^2} \, \D x \, \D y \
          = \ y \left|(\log \Phi'(z))' \right|^2 \, \D x \, \D y \
          = \ y |\nabla g(z)|^2 \, \D x \, \D y
        \end{align*}
        is a Carleson measure with norm
        \begin{align*}
          \|\mu\|_{CM} \
          &\leq \ C \|g\|_{\BMO(\RR)}^2 \
          \leq \ C^2 \|h\|_{L^{\infty}(\RR)}^2 \
          = \ C^2 \left(\arctan L \right)^2
        \end{align*}
        for some absolute constant $C$.
        The first inequality is Theorem \ref{thm:1.7}.

        To prove part (d), we follow the argument in A. P. Calder\'on's breakthrough paper on the Cauchy integral on Lipschitz curves \cite{C77}.
        As in that paper, using an approximation argument, one can assume without loss of generality that $\varphi \in C_0^{\infty}(\RR)$.
        Then $\Phi$ and $\Phi'$ extend continuously to the closed upper half plane $\re(z) \geq 0$.
        Moreover, $\Phi'$ is nonzero and bounded on $\re(z) \geq 0$, and the vector $\Phi'(x)$ is tangent to the curve $\partial \Omega$ at $\Phi(x)$.
        By part (b),
        \begin{align*}
          \left|\arg \Phi'(z) \right| \
          &\leq \ \arctan L \text{ for all } z, \re(z) \geq 0.
        \end{align*}
        It follows that
        \begin{align*}
          0 \
          &\leq \ \re (\Phi'(z)) \
          \leq \ |\Phi'(z)| \
          \leq \ \re (\Phi'(z)) (1 + L^2)^{1/2} \text{ for all } z, \re(z) \geq 0.
        \end{align*}
        Let $I \subset \RR$ be a finite interval.
        Comparing with the Poisson kernel, we have
        \begin{align*}
          \frac{1}{|I|} \int\limits_{I} \re(\Phi'(x)) \, \D x \
          &\leq \ \pi \re\left(\Phi'(x_I + \i |I|/2) \right),
        \end{align*}
        where $x_I$ is the midpoint of the interval $I$.
        Hence,
        \begin{align*}
          \frac{1}{|I|} \int\limits_{I} |\Phi'(x)| \, \D x \
          &\leq \ \pi \re\left(\Phi(x_I + \i |I|/2) \right) \left(1 + L^2 \right)^{1/2}.
        \end{align*}
        The same inequality holds for $(\Phi')^{-1}$, and so
        \begin{align*}
          \frac{1}{|I|} \int\limits_{I} |\Phi'(x)| \, \D x \ \frac{1}{|I|} \int\limits_{I} |\Phi'(x)|^{-1} \, \D x \
          &\leq \ \pi^2 |I|^2 (1 + L^2).
        \end{align*}
        This concludes the proof of part (d).
      \end{proof}
    \end{thm}

    One more important property of $\Phi$ that we will need is an estimate relating the distance to the boundary in $\RR_{+}^2$ and $\Omega$.
    Denote by $\delta \colon \Omega \to \RR$ (or $\delta_{\Omega}$) the distance to the boundary
    \begin{align*}
      \delta(\zeta) \
      &= \ \min_{\zeta' \in \partial \Omega} |\zeta - \zeta'|.
    \end{align*}

    \begin{defn}
      We will write
      \begin{align*}
        A \
        &\lesssim \ B
      \end{align*}
      to mean there exists a constant $C$ depending only on $L$ such that
      \begin{align*}
        A \
        &\leq \ C B.
      \end{align*}
      Similarly for $A \gtrsim B$.
      And $A \simeq B$ means both $A \lesssim B$ and $A \gtrsim B$.
    \end{defn}

    The estimates we need for the mapping $\Phi$ are as follows \cite{JK82}, \cite[p. 22]{P75}, \cite[p. 98]{LV73}.

    \begin{prop}
      \label{prop:1.11}
      If $\Phi \colon \RR_{+}^2 \to \Omega$ then 
      \begin{enumerate}[label=(\alph*)]
        \item $\Phi$ extends to a global quasiconformal mapping $\Psi$ of $\RR^2$ onto $\RR^2$, that is, a mapping such that for any $z_1, z_2, z_3 \in \CC$,
          \begin{align*}
            |z_1 - z_2| \simeq |z_2 - z_3| \
            &\iff \ |\Psi(z_1) - \Psi(z_2)| \simeq |\Psi(z_2) - \Psi(z_3)|.
          \end{align*}
          In particular, $\Phi$ and $\Phi^{-1}$ send non-tangential disks to regions comparable to non-tangential disks.
          Namely, if $z = x + \i y \in \RR_{+}^2$, then (with $\delta = \delta_{\Omega}$)
          \begin{align*}
            \left|\zeta - \zeta' \right| \
            &\lesssim \
            \delta(\zeta) \ \simeq \ \delta(\zeta') \text{ for all } \zeta, \zeta' \in \Phi(B(z, y/2)).
          \end{align*}
          Similarly, $|z' - z| \lesssim \im(z) \simeq \im(z')$ for all $z, z' \in \Phi^{-1}(B(\zeta, \delta(\zeta)/2))$ and all $\zeta \in \Omega$.
        \item If $z = x + \i y \in \RR_{+}^2$, then
          \begin{align*}
            \delta(\Phi(z)) \
            &\simeq \ y |\Phi'(x + \i y)|.
          \end{align*}
      \end{enumerate}
    \end{prop}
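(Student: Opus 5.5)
The plan is to prove (b) first by a Koebe-type argument and then deduce (a) from (b) together with the bound on $\arg\Phi'$ in Theorem \ref{thm:1.9}(b).

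\textbf{Step 1: distortion on hyperbolic disks.} Since $\log\Phi'$ is analytic on $\RR_{+}^2$ (Theorem \ref{thm:1.9}(a)), its imaginary part is bounded by $\arctan L<\pi/2$ (Theorem \ref{thm:1.9}(b)). An analytic function with bounded imaginary part in the disk $B(z,y)$ has real part whose oscillation on $B(z,3y/4)$ is controlled by its imaginary bound. I will justify this with the Borel--Carath\'eodory inequality or with Harnack applied to the positive harmonic functions $\pi/2\pm\arg\Phi'$, differentiated via conjugate functions. Hence
\begin{align*}
|\Phi'(z')| \ \simeq \ |\Phi'(z)| \quad \text{for all } z'\in B(z,3y/4),
\end{align*}
and $\arg \Phi'$ varies by less than $\pi/2$ there. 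Consequently $\re\bigl(e^{-\i\theta}\Phi'\bigr)\gtrsim|\Phi'(z)|$ on the disk for a suitable fixed direction $\theta$. Integrating along segments gives
\begin{align*}
|\Phi(z_1)-\Phi(z_2)| \ \simeq \ |\Phi'(z)|\,|z_1-z_2| \quad \text{for } z_1,z_2\in B(z,3y/4).
\end{align*}

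\textbf{Step 2: proof of (b).} The upper bound $\delta(\Phi(z))\gtrsim y|\Phi'(z)|$ follows from Step 1: $\Phi(B(z,y/2))$ contains a disk of radius $\simeq y|\Phi'(z)|$ about $\Phi(z)$ and lies inside $\Omega$. For the reverse inequality I use the Koebe $1/4$ theorem for the univalent map $\Phi$ on $B(z,y)$, which gives
\begin{align*}
\delta(\Phi(z)) \ \geq \ \tfrac14\, y|\Phi'(z)|.
\end{align*}
In the other direction, the same Koebe theorem applied to $\Phi^{-1}$ on the disk $B(\Phi(z),\delta(\Phi(z)))\subset\Omega$ gives
\begin{align*}
y \ \geq \ \tfrac14\,\delta(\Phi(z))\,|(\Phi^{-1})'(\Phi(z))|,
\end{align*}
that is, $\delta\lesssim y|\Phi'(z)|$. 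So (b) follows with absolute constants, and the localization statements in (a) follow at once from Step 1 combined with (b).

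\textbf{Step 3: the global quasiconformal extension in (a).} This is the main obstacle. I would proceed as follows.
\begin{itemize}
\item First prove that $\Phi$ is quasisymmetric on $\RR$. For adjacent intervals $I,J$ of equal length, $|\Phi(I)|\simeq\int_I|\Phi'|$; here arc length is comparable to chord length because $\partial\Omega$ is a Lipschitz graph.
\item Using the $A_2$ (hence doubling) property of $|\Phi'|\,\D x$ from Theorem \ref{thm:1.9}(d), these two integrals are comparable.
\item Extend the boundary map by Beurling--Ahlfors to the lower half plane, and check that the resulting global map satisfies the three-point condition.
\end{itemize}
There is an alternative route that avoids Beurling--Ahlfors. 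The global bi-Lipschitz map $(\xi,\eta)\mapsto(\xi,\eta-\varphi(\xi))$ flattens $\Omega$. Composing it with $\Psi=\Phi^{-1}$ gives a map of $\RR^2_+$ that extends to the lower half plane by reflection, and this extension is quasiconformal because its Beltrami coefficient is bounded. Either way, the heavy lifting rests on the classical results cited (\cite{LV73}, \cite{P75}), and I expect checking the three-point comparability across the real axis to be the delicate step.
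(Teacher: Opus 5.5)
Your proposal is correct, but it runs in the opposite order from the paper.

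\textbf{The paper's order.} The paper proves the global statement (a) first. It gets quasisymmetry of $|\Phi'|\,\D x$ from the $A_2$ bound in Theorem \ref{thm:1.9}(d), applies Beurling--Ahlfors, and reads off the local disk comparisons. It then obtains (b) from the Schwarz lemma applied to $\Phi$ and $\Phi^{-1}$ on the comparable disks supplied by (a).

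\textbf{Your order, and what it buys.} You get (b) directly from Koebe's $1/4$ theorem, applied to $\Phi$ on $B(z,y)$ and to $\Phi^{-1}$ on $B(\Phi(z),\delta(\Phi(z)))$. This gives $\tfrac14\, y|\Phi'(z)|\le\delta(\Phi(z))\le 4\,y|\Phi'(z)|$ with absolute constants and no use of (a). The local statements then come from your Step 1. This makes the parts used in Theorem \ref{thm:2.1} and the Appendix elementary and self-contained. However, your local estimates do not by themselves give the cone comparisons of Remark \ref{rmk:1.12}, which rely on (a) in its global form.

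\textbf{Small repairs in Steps 1 and 2.} First, $\arg\Phi'$ need not vary by less than $\pi/2$ when $L\ge 1$. You don't need that claim: $\theta=0$ works, because $\re\Phi'\ge|\Phi'|/\sqrt{1+L^2}$ everywhere by Theorem \ref{thm:1.9}(b). Second, the statement about $\Phi^{-1}(B(\zeta,\delta(\zeta)/2))$ requires running Step 1 for $\Phi^{-1}$ on $B(\zeta,\delta(\zeta))$. This is legitimate since $\arg(\Phi^{-1})'=-\arg\Phi'\circ\Phi^{-1}$; alternatively, use Koebe distortion. Third, the first two arguments in your Step 2 prove the same inequality. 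The reverse inequality comes only from the Koebe argument for $\Phi^{-1}$.

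\textbf{The global extension.} Your first route is the paper's. Your second route is simpler and correct. Let $B(\xi,\eta)=(\xi,\eta-\varphi(\xi))$. Then $G=B\circ\Phi$ is a $K(L)$-quasiconformal self-map of $\RR^2_+$, and its reflection $\tilde G$ is quasiconformal on $\RR^2$ because a line is removable. So $B^{-1}\circ\tilde G$ extends $\Phi$, and this bypasses Theorem \ref{thm:1.9}(d) and Beurling--Ahlfors entirely.

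\textbf{The three-point condition.} It needs no separate check across the axis. It follows from the standard theorem (Gehring) that a $K$-quasiconformal map of the plane onto itself is quasisymmetric with constants depending only on $K$. Combine this with the fact that bi-Lipschitz maps preserve the condition. The paper uses the same fact implicitly.
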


    Note that under the scaling $\tilde{\Phi} = a \Phi$, $\tilde{\Omega} = a \Omega$, $\tilde{\zeta}_0 = a \zeta_0$, the left and right sides of the expressions in Proposition \ref{prop:1.11} are multiplied by the factor $a$ and the Lipschitz constant is preserved.
    So it is natural that the constants in this comparison do not depend on the choice of $\zeta_0$.

    \begin{proof}
      The proof that $\Phi$ has a quasiconformal extension follows from a paper of Beurling-Ahlfors \cite{BA56}.
      That paper shows that the conformal mapping $\Phi$ extends from $\RR_{+}^2$ to a global quasiconformal mapping of $\RR^2$ if and only if the measure on the real line
      \begin{align*}
        \omega(E) \
        &= \ \int\limits_{E} w(x) \, \D x, \quad E \subset \RR
      \end{align*}
      satisfies the so-called quasi-symmetry condition
      \begin{align}
        \omega(I) \
        &\lesssim \ \omega(I')
        \label{eq:11typed}
      \end{align}
      for all adjacent intervals $I, I' \subset \RR$ of equal length.

      Inequality \eqref{eq:11typed} follows easily from Theorem \ref{thm:1.9} (d) as we now demonstrate.
      The Cauchy-Schwarz inequality implies
      \begin{align*}
        \left(\int_I \, \D x \right)^2 \
        &\leq \ \left(\int_I w \, \D x \right) \left(\int_{I} w^{-1} \, \D x \right),
      \end{align*}
      which can be rewritten as
      \begin{align*}
        \frac{1}{\omega(I)} \
        &\leq \ \frac{1}{|I|^2} \int\limits_{I} w^{-1} \, \D x.
      \end{align*}
      Suppose that $I$ and $I'$ are adjacent intervals of equal length and $J = I \cup I'$.
      Then
      \begin{align*}
        \frac{\omega(J)}{\omega(I)} \
        &\leq \ \omega(J) \frac{1}{|I|^2} \int\limits_{I} w^{-1} \, \D x \
        = \ \left(\frac{2}{|J|} \int_{J} w \, \D x \right) \left(\frac{2}{|J|} \int_I w^{-1} \, \D x \right) \
        \leq \ 4 C(L),
      \end{align*}
      with $C(L)$ the constant in Theorem \ref{thm:1.9} (d).
      This ends the proof of part (a).

      Part (a) says in particular that $\Phi$ maps non-tangential disks to regions comparable to non-tangential disks.
      Thus, the Schwarz lemma applied to $\Phi$ and $\Phi^{-1}$ 
      implies $|\Phi'(z)| \simeq \delta(\Phi(z))/y$ for all $z \in \RR_{+}^2$.
Hence,  (b) holds.
    \end{proof}

    \begin{rmk}
      \label{rmk:1.12}
      A non-tangential approach region for a boundary point $z \in \partial \Omega$ is given by
      \begin{align*}
        \Gamma_{\alpha}(z, \Omega) \
        &= \ \left\{z' \in \Omega : |z' - z| \leq (1 + \alpha) \delta_{\Omega}(z') \right\}.
      \end{align*}
      It follows from Proposition \ref{prop:1.11} (a) that there are positive constants $\beta$ and $\gamma$ depending only on $\alpha$ and the Lipschitz constant of $\Omega$ such that for any $x \in \RR$,
      \begin{align*}
        \Gamma_{\beta}(\Phi(x), \Omega) \
        &\subset \ \Phi\left(\Gamma_{\alpha}(x, \RR_{+}^2) \right) \
        \subset \ \Gamma_{\gamma}(\Phi(x), \Omega).
      \end{align*}
    \end{rmk}

  \section{Analytic and harmonic functions in dimension two}
    \begin{thm}
      \label{thm:2.1}
      There is an absolute constant $\epsilon_0 > 0$ such that the following holds.
      Suppose that $F$ is an analytic function in $\Omega$, the region above a Lipschitz graph with Lipschitz constant $L \leq \epsilon_0$.
      Suppose that
      \begin{align*}
        \int\limits_{\Omega} \delta(\zeta) |F'(\zeta)|^2 \, \D \xi \, \D \eta \
        &= \ A \
        < \ \infty, \quad \zeta = \xi + \i \eta.
      \end{align*}
      Assume further that for any $\delta_0 > 0$,
      \begin{align*}
        F(\zeta) \to 0 \text{ as } |\zeta| \to \infty \text{ in } \delta(\zeta) \geq \delta_0.
      \end{align*}
      Then $F$ has non-tangential limit $F_0(\zeta)$ for almost every $\zeta \in \partial \Omega$ with respect to arc length $\D \sigma$ and 
      \begin{align*}
        \int\limits_{\partial \Omega} |F_0(\zeta)|^2 \, \D \sigma(\zeta) \
        &\leq \ C A
      \end{align*}
      for some absolute constant $C$.
    \end{thm}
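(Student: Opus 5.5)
Transplant everything to the upper half plane through the conformal map $\Phi$. Set $G = F\circ\Phi$, which is analytic on $\RR_{+}^2$. The change of variables $\zeta=\Phi(z)$ has Jacobian $|\Phi'(z)|^2$, and $F'(\Phi(z))\Phi'(z)=G'(z)$. Proposition \ref{prop:1.11}(b) gives $\delta(\Phi(z))\simeq y|\Phi'(z)|$, with constants uniform for $L\le 1$. Together these give
\begin{align*}
A \ \simeq \ \int\limits_{\RR_{+}^2} y\,|G'(z)|^2\,|\Phi'(z)|^{-1}\,\D x\,\D y.
\end{align*}
The boundary side is handled similarly. Since $\D\sigma=|\Phi'(x)|\,\D x$, and non-tangential limits correspond under $\Phi$ (Remark \ref{rmk:1.12}), we must show that $G$ has non-tangential limits $G_0$ a.e.\ and that
\begin{align*}
\int_{\RR}|G_0(x)|^2|\Phi'(x)|\,\D x\lesssim \int_{\RR_{+}^2} y|G'|^2|\Phi'|^{-1}\,\D x\,\D y.
\end{align*}
The natural device is to absorb the weight into an analytic function. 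Let $H=G\,(\Phi')^{1/2}$, which is well defined by Theorem \ref{thm:1.9}(a). Then $|H(x)|^2=|G_0(x)|^2|\Phi'(x)|$, so the goal becomes $\int_{\RR}|H|^2\,\D x\lesssim A$, an unweighted Hardy space estimate.

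For an analytic $H$ in $\cH^2$ decaying suitably at infinity, Proposition \ref{prop:1.8} applied to $f=|H|^2$, using $\Delta |H|^2=4|H'|^2$, gives
\begin{align*}
\int_{\RR}|H(x)|^2\,\D x = 4\int_{\RR_{+}^2} y\,|H'(z)|^2\,\D x\,\D y.
\end{align*}
Now compute
\begin{align*}
H' = G'(\Phi')^{1/2}+\tfrac12 G(\Phi')^{1/2}\frac{\Phi''}{\Phi'}
= \frac{G'}{(\Phi')^{1/2}}\,\Phi' + \tfrac12 H\,(\log\Phi')'.
\end{align*}
The weight is placed wrongly in the first term, since $|G'|^2|\Phi'|$ appears where $|G'|^2|\Phi'|^{-1}$ is wanted. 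So instead take $H=G(\Phi')^{-1/2}$? That pairs with boundary weight $|\Phi'|^{-1}$, which is also wrong. The resolution is to apply Green's formula to $f=|G|^2|\Phi'|$ directly rather than to $|H|^2$. Since $\log|\Phi'|$ is harmonic,
\begin{align*}
\Delta\big(|G|^2|\Phi'|\big)=|\Phi'|\Big(4|G'|^2+4\re\big(\bar G\,G'\,\overline{(\log\Phi')'}\big)+|G|^2|(\log\Phi')'|^2\Big).
\end{align*}
This still carries $|\Phi'|$ rather than $|\Phi'|^{-1}$ on the $|G'|^2$ term. The cleaner route is to go back and compute everything directly in $\Omega$, with the density $\delta$ replaced by the harmonic function $v=\im\Psi$, $\Psi=\Phi^{-1}$, which satisfies $v\simeq|\Psi'|\,\delta$. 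Then Green's formula on $\RR_+^2$ for $f=|G|^2|\Phi'|$, pulled back, expresses $\int|F_0|^2\,\D\sigma$ as an integral of $4\,v\,|\Psi'|^{-1}|F'|^2$ plus error terms involving $(\log\Phi')'$.

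The main term is then $\int_\Omega y|G'|^2\,\D x\,\D y$-type with weight $y\simeq \delta/|\Phi'|$, and up to constants depending on $L$ this is exactly $A$. The error terms contain the factor $|(\log\Phi')'|$. The cross term is bounded by Cauchy--Schwarz by
\[
\Big(\int y|G'|^2|\Phi'|\Big)^{1/2}\Big(\int y|G|^2|\Phi'||(\log\Phi')'|^2\Big)^{1/2}.
\]
The last integral, by Carleson's Theorem \ref{thm:1.5} with the measure of Theorem \ref{thm:1.9}(c) of norm $\le C(\arctan L)^2$, is at most
\[
C(\arctan L)^2\int N(G|\Phi'|^{1/2})^2 \;\lesssim\;(\arctan L)^2\int|H|^2\,\D x,
\]
using \eqref{eq:Hardy-max} for $H=G(\Phi')^{1/2}$ and the fact that $|\Phi'|$ is comparable on the Whitney disks by Proposition \ref{prop:1.11}. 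Writing $B=\int|H|^2\,\D x$, this yields
\[
B\le C_1A+C_2\,\epsilon_0 B^{1/2}A^{1/2}+C_2\epsilon_0^2B,
\]
and for $\epsilon_0$ small this gives $B\lesssim A$. This absorption is exactly where the small Lipschitz constant is used.

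The main obstacle is justifying the absorption, which requires knowing a priori that $B<\infty$ and that the decay hypotheses of Proposition \ref{prop:1.8} hold. I would handle it by first replacing $G$ by $G(z+\i t)$ and $\Phi$ by its translate, and inserting a cutoff at large $|z|$ justified by the hypothesis $F\to0$ at infinity away from the boundary. This makes everything smooth up to the boundary with finite $B$. I would then obtain the estimate uniformly in $t$ and let $t\to0$. Theorem \ref{thm:1.3} then gives the boundary function $H_0\in L^2$ and hence the non-tangential limits of $G$ and of $F$ a.e. The remaining care is in the bookkeeping of which power of $|\Phi'|$ attaches to $|G'|^2$. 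I would settle it by checking that the pulled-back main term equals $\int_\Omega v|F'|^2\,\D\xi\,\D\eta$, which is $\simeq A$ since $v\simeq\delta$ up to the bounded factor $|\Psi'|$ that is absorbed in the change of measure.
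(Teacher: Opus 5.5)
Once one slip is corrected, your a priori argument is the paper's. However, the proposal contains a bookkeeping error, and the justification step you defer is a genuine gap.

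\textbf{The bookkeeping error.} Your change of variables dropped the Jacobian. Since $\D\xi\,\D\eta=|\Phi'|^2\,\D x\,\D y$ and $|F'(\Phi(z))|^2|\Phi'(z)|^2=|G'(z)|^2$, Proposition~\ref{prop:1.11}(b) gives $A\simeq\int_{\RR_+^2}y\,|\Phi'(z)|\,|G'(z)|^2\,\D x\,\D y$, with $|\Phi'|$ rather than $|\Phi'|^{-1}$.

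So your first choice $H=G(\Phi')^{1/2}$ was right all along. The formula $H'=G'(\Phi')^{1/2}+\tfrac12H(\log\Phi')'$ is exactly the paper's splitting $M+E$, with $\int y|M|^2\simeq A$. The function $|G|^2|\Phi'|$ you fall back to is literally $|H|^2$, so the detour through $\Omega$ and $v=\im\Psi$ is unnecessary.

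Your closing check is false. $|\Psi'|$ is not bounded: $\log|\Phi'|$ is only in $\BMO$, and near the preimage $x_0$ of a corner of $\partial\Omega$, $\Phi'(z)$ behaves like $(z-x_0)^{\beta-1}$ with $\beta\ne1$. Hence $\int_\Omega v|F'|^2$ is not comparable to $A$. The correct identity is the one you wrote a paragraph earlier, $v|\Psi'|^{-1}=y|\Phi'|\simeq\delta$.

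With that fixed, your steps are what the paper does: Green's formula for $|H|^2$, Theorem~\ref{thm:1.9}(c), Carleson's theorem, \eqref{eq:Hardy-max}, and absorption. Expanding $|H'|^2$ and using Cauchy--Schwarz on the cross term, instead of $|M+E|^2\le 2|M|^2+2|E|^2$, is immaterial. No Whitney-disk comparability is needed, since $|G|\,|\Phi'|^{1/2}=|H|$ pointwise.

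\textbf{The gap: regularization.} The qualitative hypothesis $F\to0$ does not make $\int_{\RR}|H(x+\i t)|^2\,\D x$ finite. Nor does it give the decay required by Proposition~\ref{prop:1.8}, because $|\Phi'(x+\i t)|$ can grow as $|x|\to\infty$ (for a sector wider than a half-plane, $\Phi(z)=z^\beta$ with $\beta>1$).

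The missing idea is the quantitative bound $|H(z)|^2\lesssim A/y$. Subharmonicity of $|F'|^2$ on $B_{\delta/2}(\zeta)$ gives $|F'(\zeta)|^2\lesssim A\,\delta(\zeta)^{-3}$. Integrating $F'$ up the vertical ray from $\zeta$, using $F\to0$ and $\delta(\zeta+\i s)\simeq\delta(\zeta)+s$, then gives $|F(\zeta)|^2\lesssim A/\delta(\zeta)$.

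The cutoff should also preserve analyticity. With a non-analytic $\chi_R$, Carleson's theorem controls the error term by $\int N(H)^2$ over a neighborhood of the support, not by $\int\chi_R|H|^2\,\D x$. So the subtraction does not go through as you describe.

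The paper instead uses $H_\gamma=H/(\i+\gamma z)$, evaluated at $z+\i/k$.
\begin{itemize}
\item The pointwise bound gives $\sup_{y\ge1/k}\int|H_\gamma(x+\i y)|^2\,\D x\lesssim Ak/\gamma$, together with the decay needed for Green's formula.
\item It also controls the new term from differentiating the damping factor, $\tilde E_\gamma=-\gamma H/(\i+\gamma z)^2$:
\[
\int y\,|\tilde E_\gamma(z+\i/k)|^2\,\D x\,\D y\ \lesssim\ A\int\gamma^2(1+\gamma^2|z|^2)^{-2}\,\D x\,\D y\ =\ CA .
\]
This term carries no factor of $\epsilon_0$, so it cannot be absorbed and must be bounded directly in this way.
\end{itemize}
One then lets $\gamma\to0$ by Fatou's lemma and uses that the resulting bound is uniform in $k$.
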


    \begin{rmk}
      We will give a second proof of Theorem \ref{thm:2.1}, valid for all $L$, rather than sufficiently small $L$ in the Appendix.
      That proof will not be entirely self-contained as it relies on the theory of weighted norm inequalities.
      In Section 3, we give a different proof of this estimate that is valid in all dimensions and does not assume $L \leq \epsilon_0$.
    \end{rmk}

    \begin{proof}
      We begin with an explanation of the idea of the proof in the special case in which $\partial \Omega$ is smooth and $F$ is smooth and has suitable decay at infinity.
      Then we will use an approximation argument to give a complete proof.

      For $z = x + \i y \in \RR_{+}^2$, define the analytic function
      \begin{align*}
        H(z) \
        &= \ F \circ \Phi(z) (\Phi'(z))^{1/2}.
      \end{align*}
      For $\partial \Omega$ smooth, $\Phi$ is smooth, and the change of variables $\zeta = \Phi(z)$ yields $\D \sigma(\zeta) = |\Phi'(z)| \, \D x$ and 
      \begin{align*}
        \int\limits_{\partial \Omega} |F(\zeta)|^2 \, \D \sigma(\zeta) \
        &= \ \int\limits_{\RR} |H(x)|^2 \, \D x.
      \end{align*}
      Since $\Delta |H(z)|^2 = 4 |H'(z)|^2$, if $H$ decays suitably at infinity, then Green's formula (Proposition \ref{prop:1.8}) applied to $f = |H|^2$) gives
      \begin{align*}
        \int\limits_{\RR} |H(x)|^2 \, \D x \
        &= \ 4 \int\limits_{\RR_{+}^2} y |H'(z)| \, \D x \, \D y.
      \end{align*}
      Next, we write
      \begin{align*}
        H'(z) \
        &= \ \left(F' \circ \Phi(z) \right) \left(\Phi'(z) \right)^{3/2} + \frac{1}{2} \left(F \circ \Phi(z) \right) \left(\Phi'(z) \right)^{-1/2} \Phi''(z) \\
        &= \ M(z) + E(z).
      \end{align*}
      The first term, $M(z)$, is controlled by $A$.
      Indeed, using Proposition \ref{prop:1.11},
      \begin{align*}
        A \
        &= \ \int\limits_{\Omega} \delta(\zeta) |F'(\zeta)|^2 \, \D \xi \, \D \eta \
        = \ \int\limits_{\RR_{+}^2} \delta(\Phi(z)) |F' \circ \Phi(z)|^2 |\Phi'(z)|^2 \, \D x \, \D y \\
        &\simeq \ \int\limits_{\RR_{+}^2} y |\Phi'(z)| \, |F' \circ \Phi(z)|^2 |\Phi'(z)|^2 \, \D x \, \D y \\
        &= \ \int\limits_{\RR_{+}^2} y |M(z)|^2 \, \D x \, \D y.
      \end{align*}
      In other words,
      \begin{align*}
        \int\limits_{\RR_{+}^2} y |M(z)|^2 \, \D x \, \D y \
        &\lesssim \ A.
      \end{align*}
      To estimate the error term $E(z)$, we write
      \begin{align*}
        \int\limits_{\RR_{+}^2} y |E(z)|^2 \, \D x \, \D y \
        &= \ \frac{1}{4} \int\limits_{\RR_{+}^2} y |F \circ \Phi(z)|^2 \frac{|\Phi''(z)|^2}{|\Phi'(z)|} \, \D x \, \D y \
        = \ \int\limits_{\RR_{+}^2} |H(z)|^2 \, \D \mu(z)
      \end{align*}
      with
      \begin{align*}
        \D \mu(z) \
        &= \ y \frac{|\Phi''(z)|^2}{|\Phi'(z)|^2} \, \D x \, \D y.
      \end{align*}
      It follows from Theorem \ref{thm:1.9} (c) and Theorems \ref{thm:1.5} and \ref{thm:1.2} that
      \begin{align*}
        \int\limits_{\RR_{+}^2} |H(z)|^2 \, \D \mu(z) \
        &\lesssim \ \|\mu\|_{CM} \|N(H)\|_{L^2(\RR)} \
        \lesssim \ \epsilon_0^2 \int\limits_{\RR} |H(x)|^2 \, \D x.
      \end{align*}
      In all, there is an absolute constant $C$ such that
      \begin{align*}
        \int\limits_{\RR} |H(x)|^2 \, \D x \
        &\leq \ C A + C \epsilon_0^2 \int\limits_{\RR} |H(x)|^2 \, \D x.
      \end{align*}
      Assuming the integral on both sides is finite and that $C \epsilon_0^2 \leq 1/2$, we can subtract and obtain
      \begin{align*}
        \frac{1}{2} \int\limits_{\RR} |H(x)|^2 \, \D x \
        &\leq \ C A,
      \end{align*}
      ending the proof of Theorem \ref{thm:2.1} under the extra assumptions.

      This proof is incomplete in two ways.
      First of all, the function $H$ need not be smooth up to the boundary $\partial \RR_{+}^2$ and need not decay at $\infty$ fast enough to validate the step using Green's formula.
      Secondly, we need the $L^2(\RR)$ integral of $H$ to be finite in order to be allowed to subtract the term with the factor $C \epsilon_0^2$.

      To take care of these issues, we modify $H$ by defining
      \begin{align*}
        H_{\gamma}(z) \
        &= \ \frac{H(z)}{\i + \gamma z} \
        = \ \frac{F \circ \Phi(z) \Phi'(z)^{1/2}}{\i + \gamma z}, \quad \gamma > 0.
      \end{align*}
      For all $\gamma > 0$ and all $k \geq 1$, we will prove the qualitative bound
      \begin{align}
        \sup_{y \geq 1/k} \int\limits_{\RR} |H_{\gamma}(x + \i y)|^2 \, \D x \
        &< \ \infty.
        \label{eq:12typed}
      \end{align}
      As before, we then show that there is an absolute constant $C$ such that 
      \begin{align}
        \int\limits_{\RR} |H_{\gamma}(x + \i/k)|^2 \, \D x \
        &\leq \ C A + C \epsilon_0^2 \int\limits_{\RR} |H_{\gamma}(x + \i/k)|^2 \, \D x \text{ for } k \geq 1.
        \label{eq:13typed}
      \end{align}
      (But there will also be a new term arising from the denominator $\i + \gamma z$.)
      Choosing $\epsilon_0$ so that $C \epsilon_0^2 \leq 1/2$, we obtain
      \begin{align*}
        \frac{1}{2} \int\limits_{\RR} |H_{\gamma}(x + \i/k)|^2 \, \D x \
        &\leq \ C A.
      \end{align*}
      The theorem will follow by taking the limit first as $\gamma \to 0$ and then as $k \to \infty$.

      To obtain \eqref{eq:12typed}, we first show that
      \begin{align}
        \sup_{y \geq 1/k} |H(z)|^2 \
        &= \ \sup_{y \geq 1/k} |F \circ \Phi(z)|^2 |\Phi'(z)| \
        \lesssim \ A k.
        \label{eq:14typed}
      \end{align}
      Indeed, since $|F'|^2$ is subharmonic,
      for $\zeta \in \Omega$, $\delta = \delta(\zeta)$ we have
      \begin{align*}
        |F'(\zeta)|^2 \
        &\leq \ \frac{1}{|B_{\delta/2}|} \int\limits_{B_{\delta/2}(\zeta)} |F'(\zeta')|^2 \, \D \xi' \, \D \eta' \\
        &\leq \ \frac{2/\delta}{|B_{\delta/2}|} \int\limits_{B_{\delta/2}(\zeta)} \delta(\zeta') |F'(\zeta')|^2 \, \D \xi' \, \D \eta' \\
        &\leq \ \frac{10 A}{\delta(\zeta)^3}.
      \end{align*}
      Next, if the Lipschitz constant $L \leq 1$, then $\delta(\zeta + \i s) \simeq \delta(\zeta) + s$.
      Thus, the fact that $F(\zeta + \i s) \to 0$ as $s \to \infty$ implies
      \begin{align*}
        |F(\zeta)| \
        &\leq \ \int\limits_{0}^{\infty} |F'(\zeta + \i s)| \, \D s \
        \leq \ C A^{1/2} \int\limits_{0}^{\infty} \left(\delta(\zeta) + s \right)^{-3/2} \, \D s \
        \lesssim \ \frac{A^{1/2}}{\delta(\zeta)^{1/2}}.
      \end{align*}
      For $z = x + \i y$, $\zeta = \Phi(z)$, Proposition \ref{prop:1.11} says $\delta(\zeta) \simeq y |\Phi'(z)|$, so we get
      \begin{align}
        |F(\Phi(z))|^2 |\Phi'(z)| \
        &\lesssim \ \frac{A}{y},
        \label{eq:15typed}
      \end{align}
      and this immediately implies \eqref{eq:14typed}.

      Next, the bound \eqref{eq:14typed} yields
      \begin{align*}
        \int\limits_{\RR} |H_{\gamma}(x + \i y)|^2 \, \D x \
        &\lesssim \ \int\limits_{\RR} \frac{A k}{1 + \gamma^2 x^2} \, \D x \
        \lesssim \ \frac{Ak}{\gamma} \text{ for all } y \geq 1/k,
      \end{align*}
      and this proves \eqref{eq:12typed}.

      Another immediate consequence of \eqref{eq:14typed} is the pointwise bound
      \begin{align*}
        |H_{\gamma}(z)| \
        &\lesssim \ \frac{A k}{1 + \gamma |z|}, \quad y \geq 1/k.
      \end{align*}
      The Cauchy integral formula implies that for an analytic function $h$ in a disk,
      \begin{align*}
        |h'(0)| \
        &\leq \ \frac{1}{r} \max_{|z| \leq r} |h(z)|.
      \end{align*}
      It follows, using the disk of radius $y/2$ centered at $z$, that
      \begin{align*}
        y |H_{\gamma}'(z)| \
        &\lesssim \ \frac{A k}{1 + \gamma |z|}, \quad y \geq 1/k.
      \end{align*}
      Assembling these estimates, we see that the function $f(x, y) = |H_{\gamma}(z + \i/k)|^2$ is smooth in $\overline{\RR_{+}^2}$ and satisfies the estimate
      \begin{align*}
        |f(x, y)| + y |\nabla f(x, y)| \
        &= \ |H_{\gamma}(z + \i/k)|^2 + 2 y |H_{\gamma}'(z + \i/k)| \, |H_{\gamma}(z + \i/k)| \\
        &\lesssim \ \frac{A^2 k^2}{(1 + \gamma |z|)^2}.
      \end{align*}
      In particular, $|f| + y |\nabla f| = O(|(x, y)|^{-2})$ as $|(x, y)| \to \infty$, and therefore we can apply Green's formula (Proposition \ref{prop:1.8}).
      This gives
      \begin{align*}
        \int\limits_{\RR} |H_{\gamma}(x + \i/k)|^2 \, \D x \
        &= \ 4 \int\limits_{\RR_{+}^2} y |H_{\gamma}'(z + \i/k)|^2 \, \D x \, \D y.
      \end{align*}
      The formula for $H_{\gamma}'$ is
      \begin{align*}
        H_{\gamma}'(z) \
        &= \ M_{\gamma}(z) + E_{\gamma}(z) + \tilde{E}_{\gamma}(z),
      \end{align*}
      in which
      \begin{align*}
        M_{\gamma} \
        &= \ \frac{(F' \circ \Phi) (\Phi')^{3/2}}{\i + \gamma z}; \quad E_{\gamma} \
        = \ \frac{H_{\gamma} \Phi''}{2 \Phi'}; \quad \tilde{E}_{\gamma} \
        = \ -\frac{\gamma (F \circ \Phi) (\Phi')^{1/2}}{(\i + \gamma z)^2}.
      \end{align*}
      The first term is controlled by the estimate we already have:
      \begin{align*}
        \int\limits_{\RR_{+}^2} y |M_{\gamma}(z + \i/k)|^2 \, \D x \,\D y \
        &\leq \ \int\limits_{\RR_{+}^2} y |M(z)|^2 \, \D x \, \D y \
        \lesssim \ A.
      \end{align*}
      For the term $E_{\gamma}$, we write
      \begin{align*}
        \int\limits_{\RR_{+}^2} y |E_{\gamma}(z + \i/k)|^2 \, \D x \, \D y \
        &= \ \int\limits_{\RR_{+}^2} |H_{\gamma}(z + \i/k)|^2 \, \D \mu_k(z)
      \end{align*}
      with 
      \begin{align*}
        \D \mu_k(z) \
        &= \ y \frac{|\Phi''(z + \i/k)|^2}{4 |\Phi'(z + \i/k)|^2} \, \D x \, \D y.
      \end{align*}
      Since $|\arg \Phi'(x + \i/k)| \leq \arctan L$, we can use Theorem \ref{thm:1.9} (c) as in the special case to show that $\|\mu_k\|_{CM} \leq C (\arctan L)^2 \lesssim \epsilon_0^2$.
      Hence, by Theorems \ref{thm:1.2} and \ref{thm:1.5},
      \begin{align*}
        \int\limits_{\RR_{+}^2} |H_{\gamma}(z + \i/k)|^2 \, \D \mu_k(z) \
        &\lesssim \ \epsilon_0^2 \int\limits_{\RR} |H_{\gamma}(x + \i/k)|^2 \, \D x.
      \end{align*}

      Next, we need to estimate the term involving $\tilde{E}_{\gamma}$.
      Applying the estimate \eqref{eq:15typed}, we have
      \begin{align*}
        |F(\Phi(z + \i/k))|^2 |\Phi'(z + \i/k)| \
        &\lesssim \ \frac{A}{y + 1/k}.
      \end{align*}
      Since $y/(y + 1/k) \leq 1$ and $|\i + \gamma(z + \i/k)|^2 \geq 1 + \gamma^2 |z|^2$, it follows that
      \begin{align*}
        \int\limits_{\RR_{+}^2} y |\tilde{E}_{\gamma}(z + \i/k)|^2 \, \D x \, \D y \
        &\lesssim \ \int\limits_{\RR_{+}^2} \frac{A \gamma^2}{(1 + \gamma^2 |z|^2)^2} \, \D x \, \D y \\
        &= \ A \int\limits_{\RR_{+}^2} \frac{1}{(1 + |z'|^2)^2} \, \D x' \, \D y' \
        = \ A C,
      \end{align*}
      with $z' = \gamma z$ and $C$ an absolute constant.

      Combining the bounds for all three terms, there is an absolute constant $C$ for which
      \begin{align*}
        &\int\limits_{\RR} \left|H_{\gamma}(x + \i/k) \right|^2 \, \D x \\
        &\qquad \qquad = \ 4 \int\limits_{\RR_{+}^2} y \left|H_\gamma'(z + \i/k) \right|^2 \, \D x \, \D y \\
        &\qquad \qquad \leq \ 12 \int\limits_{\RR_{+}^2} y \left(|M_{\gamma}(z + \i/k)|^2 + |E_{\gamma}(z + \i/k)|^2 + |\tilde{E}_{\gamma}(z + \i/k)|^2 \right) \, \D x \, \D y \\
        &\qquad \qquad \leq \ C A + C \epsilon_0^2 \int\limits_{\RR} \left|H_{\gamma}(x + \i/k) \right|^2 \, \D x.
      \end{align*}
      Choosing $C \epsilon_0^2 \leq 1/2$, we obtain
      \begin{align}
        \int\limits_{\RR} \left|H_{\gamma}(x + \i/k) \right|^2 \, \D x \
        &\lesssim \ A
      \end{align}
      with a constant independent of $\gamma > 0$ and $k > 0$.

      For each fixed $k$, Fatou's lemma implies that
      \begin{align*}
        \int\limits_{\RR} \left|H_0(x + \i/k) \right|^2 \, \D x \
        &\leq \ \liminf_{\gamma \to 0} \int\limits_{\RR} \left|H_{\gamma}(x + \i/k) \right|^2 \, \D x \
        \lesssim \ A.
      \end{align*}
      Since the bounds are independent of $k$, $H$ belongs the Hardy space $\cH^2(\RR^2_+) $ with non-tangential limits, which we will denote $H_0(x)$ almost everywhere $\D x$ (see \eqref{eq:Hardy} and \eqref{eq:Hardy-max}).    Moreover, $\Phi'$ has {\em non-zero} non-tangential limits almost everywhere $\D x$, so $F \circ \Phi = H/(\Phi')^{1/2}$ also has non-tangential limits almost everywhere $\D x$ (see Remark \ref{rmk:1.12}).
      Furtherrmore, $\Phi$ maps sets of measure zero $\D x$ to sets of measure zero $\D \sigma$ and vice versa.
      Thus,
      \begin{align*}
        F(\zeta') \to F_0(\zeta) \text{ as } \zeta' \to \zeta = \Phi(x) \text{ non-tangentially for } \zeta' \in \Omega
      \end{align*}
      for almost every $\zeta = \Phi(x) \in \partial \Omega$ with respect to $\D \sigma$,
      where we define $F_0$ by 
            \begin{align*}
        F_0(\zeta) \
        & : \, =  \ H_0(x) \Phi'(x)^{-1/2}, \quad \zeta \ = \ \Phi(x).
      \end{align*}
          Finally,
      \begin{align*}
        \int\limits_{\partial \Omega} |F_0(\zeta)|^2 \, \D \sigma(\zeta) \
        &= \ \int\limits_{\RR} \left|H_0(x) \Phi'(x)^{-1/2} \right|^2 |\Phi'(x)| \, \D x \
        = \ \int\limits_{\RR} \left|H_0(x) \right|^2 \, \D x \
        \lesssim \ A.
      \end{align*}
      This concludes the proof of Theorem \ref{thm:2.1}.
    \end{proof}
  As mentioned in the introduction, we will present a second proof of Theorem 2.1 in the plane, valid for all $L$.
    To avoid disrupting the flow of the exposition, we will present this proof in the Appendix (see Theorem \ref{thm:A.1}). 

    \begin{cor}
      \label{cor:2.2written}
      Let $\Omega$ and $\epsilon_0 > 0$ be as in Theorem \ref{thm:2.1}.
    Suppose that $F$ is analytic in $\Omega$, and 
      \[
      \int_{\Omega} \delta(\zeta) |F'(\zeta)|^2 \, \D \xi \, \D \eta = A < \infty, 
      \quad (\zeta = \xi + \i \eta).
      \]
      Then there exists $c\in\CC$ such that for any $\delta_0 > 0$,
      \begin{align*}
        F(\zeta) \to c \ \text{ as } |\zeta| \to \infty \ \ \text{for} \ \ \zeta\in \Omega \ \ \mbox{satisfying} \ \ \delta(\zeta) \geq \delta_0.
      \end{align*}
      Consequently, by Theorem \ref{thm:2.1}, 
      $F - c$ has non-tangential limit $F_0(\zeta) - c$ for almost every $\zeta \in \partial \Omega$, with respect to arc length $\D \sigma$, and an absolute constant $C$ such that 
      \begin{align*}
        \int\limits_{\partial \Omega} \left|F_0(\zeta) - c \right|^2 \, \D \sigma(\zeta) \
        &\leq \ C A.
      \end{align*}
    \end{cor}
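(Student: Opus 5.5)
The plan is to prove the existence of the limit $c$ directly from the finiteness of $A$, using only the pointwise derivative bound from the proof of Theorem \ref{thm:2.1}. That bound needed subharmonicity of $|F'|^2$ and not the decay hypothesis:
\begin{align*}
|F'(\zeta)|^2 \ \leq \ \frac{10}{\delta(\zeta)^3}\int_{B_{\delta(\zeta)/2}(\zeta)} \delta(\zeta')|F'(\zeta')|^2 \, \D\xi'\,\D\eta'.
\end{align*}
Write $A(R)$ for the integral of $\delta|F'|^2$ over $\Omega\setminus B_R(0)$. Then $A(R)\to 0$ as $R\to\infty$ by dominated convergence. Hence for $|\zeta|$ large, $|F'(\zeta)|^2 \lesssim A(|\zeta|/2)\,\delta(\zeta)^{-3}$, with the balls kept away from the origin since $\delta(\zeta)\le|\zeta|+C$.

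First I define $c$ along the vertical direction. Fix $\zeta_1\in\Omega$. Since $L\le 1$, we have $\delta(\zeta_1+\i s)\simeq \delta(\zeta_1)+s$. The integral $\int_0^\infty |F'(\zeta_1+\i s)|\,\D s \lesssim A^{1/2}\int_0^\infty (\delta(\zeta_1)+s)^{-3/2}\,\D s$ is finite, so $c:=\lim_{s\to\infty}F(\zeta_1+\i s)$ exists. The same estimate shows that $c$ does not depend on $\zeta_1$. Take two base points $\zeta_1,\zeta_2$ and connect $\zeta_1+\i s$ to $\zeta_2+\i s$ by a horizontal segment of fixed length on which $\delta\gtrsim s$. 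The tail bound then gives an oscillation of at most $|\zeta_1-\zeta_2|\,A^{1/2}s^{-3/2}\to 0$.

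Next I show $F(\zeta)\to c$ as $|\zeta|\to\infty$ in $\{\delta\ge\delta_0\}$. The vertical path from $\zeta$ gives
\begin{align*}
|F(\zeta)-c| \ \le \ \int_0^\infty |F'(\zeta+\i s)|\,\D s \ \lesssim \ \sup_{s\ge 0} A\!\left(\tfrac{|\zeta+\i s|}{2}\right)^{1/2}\,\delta(\zeta)^{-1/2} \ \lesssim \ A(|\zeta|/4)^{1/2}\,\delta_0^{-1/2}.
\end{align*}
This uses that every point $\zeta+\i s$ with $|\zeta|$ large has modulus $\gtrsim|\zeta|$, together with a slight adjustment of radii to absorb the $O(1)$ offset of $\partial\Omega$ from the origin (e.g.\ normalize $\varphi(0)=0$). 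The right side tends to $0$, which proves the first claim.

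Finally, $F-c$ satisfies both hypotheses of Theorem \ref{thm:2.1}: its $\delta$-weighted Dirichlet integral is still $A$, and it decays at infinity in each region $\{\delta\ge\delta_0\}$. The theorem then yields non-tangential limits $F_0-c$ a.e.\ $\D\sigma$ and $\int_{\partial\Omega}|F_0-c|^2\,\D\sigma\le CA$.

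The main obstacle is the uniformity in the second step. The constant $c$ must be reached uniformly for points far out horizontally, not just vertically. This is exactly why I bound $F'$ with the tail quantity $A(R)$ rather than with the full $A$.
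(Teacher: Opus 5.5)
Your proposal is correct and is essentially the paper's argument (Proposition~\ref{prop:A.2}, then Theorem~\ref{thm:2.1} applied to $F-c$): the subharmonic bound $|F'|\lesssim(\text{local area integral})^{1/2}\,\delta^{-3/2}$, integration of $\delta^{-3/2}$ along a path to infinity, and the vanishing of the tail of $A$. The difference is only in the bookkeeping: you localize with the tail $A(R)$ over $\Omega\setminus B_R(0)$ and integrate along vertical rays, while the paper uses the cone integrals $A(\zeta)$ over $\Gamma_2(\zeta)$ and dyadic chains in $\Gamma_0(\zeta)$; your version gives the single uniform bound $|F(\zeta)-c|\lesssim A(|\zeta|/4)^{1/2}\,\delta(\zeta)^{-1/2}$ and so avoids the paper's final case split.
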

    To prove the corollary it suffices to show that the constant $c$ exists.  This proof is
    straightforward and is likewise deferred to the Appendix to streamline the presentation (see Proposition \ref{prop:A.2}).

  \section{Endpoint counterexample for the inhomogeneous Dirichlet problem}
    In this section and Section 4, we will be using notations for $L^2$-Sobolev spaces that are consistent with Sections 2-4 of \cite{JK95}.
    Namely,
    \begin{align*}
      L_{\alpha}(\Omega) \
      &= \ H^{\alpha}(\Omega), \quad \dot{L}_{\alpha}(\Omega) \
      = \ \dot{H}^{\alpha}(\Omega), \text{ and } \\
      L_1^2(\partial \Omega) \
      &= \ \left\{f \in L^2(\partial \Omega, \D \sigma) : \nabla_T f \in L^2(\partial \Omega, \D \sigma) \right\},
    \end{align*}
    with $\nabla_T f$ the tangential gradient.
    The main result of this section, Theorem \ref{thm:3.1}, shows how to deduce Theorem \ref{thm:1.2} (c) for $p = 2$ in \cite{JK95} from Corollary \ref{cor:2.2written}.

    \begin{thm}
      \label{thm:3.1}
      For any $\epsilon > 0$, there exists a Lipschitz function $\varphi \colon \RR \to \RR$ such that $\|\varphi\|_{\infty} \leq \epsilon$, and a compactly supported function $g \in L_{-1/2}^2(\Omega)$ with $\Omega = \{(x, y) \in \RR^2 : y > \varphi(x)\}$, such that the solution to $\Delta u = g$ with $\Tr(u) = 0$ on $\partial \Omega$, is such that $u \not\in \dot{L}_{3/2}^2(\Omega)$.
      \begin{proof}
        The starting point is the example due to Guy David (unpublished) of a function $f \in \dot{L}_{3/2}^2(\Omega)$ such that $\Tr(f) \not\in L_1^2(\partial \Omega)$.
        We define the zig-zag function $\theta$ by $\theta(x) = \epsilon x$ for $0 \leq x \leq 1$, $\theta(x) = 2 \epsilon - \epsilon x$ for $1 \leq x \leq 2$, and $\theta(x + 2) = \theta(x)$.
        We then let $\varphi(x) = 2^{-4^N} \theta(2^{4^N} x)$ for $2^{-N} \leq x \leq 2^{-N + 1}$, $N = 0, 1, 2, \ldots$, $\varphi(x) = 0$ for $|x| \geq 2$ (for more details, see \cite[p. 176]{JK95}).
        We let $f'(y) = (\log 10/|y|)^{-1/2} \alpha(y)$, where $\alpha$ is a smooth cut-off function.
        It is not difficult to see that $f'(y) \in L_{1/2}^2(\RR)$, $f'(0) = \infty$, and that $h(x) = f(\varphi(x))$ satisfies $|h'(x)| \not\in L^2([-3,3])$.
        Finally, let the cut-off function $\beta\in C_0^\infty((-4,4))$ be equal to $1$ in a neighborhood of $[-3,3]$.   If we define $w(x, y) = \beta(x) f(y)$, then $w \in \dot{L}_{3/2}^2(\RR^2)$, but $\Tr(w)$ on the graph of $\varphi$ does not belong to $\dot{L}_{1}^2(\partial \Omega)$.
        
        Let $u$ solve $\Delta u = g$, $\Tr(u) = 0$, in $\Omega$, the domain above the graph of $\varphi$, where $g = \Delta w$, so that $g \in \dot{L}_{-1/2}^2(\Omega)$.
        Assume, for contradiction, that $u \in \dot{L}_{3/2}^2(\Omega)$, and let $v = u - w$.
        Then $v$ is harmonic in $\Omega$, $v \in \dot{L}_{3/2}^{2}(\Omega)$, and $\Tr(v)$ on $\partial \Omega$ equals $-\Tr(w)$, which does not belong to $\dot{L}_1^2(\partial \Omega)$.
        
        Consider now the analytic function $F = \p_y v + \i \p_x v$.
        Since $v \in \dot{L}_{3/2}^2(\Omega)$, $F \in \dot{L}_{1/2}^{2}(\Omega)$.
        By the analyticity of $F$, the elementary argument at the bottom of page 182 of \cite{JK95} gives that $\int_{\Omega} \delta(\zeta) |F'(\zeta)|^2 \, \D \zeta < \infty$.
By Corollary \ref{cor:2.2written}, we conclude that for some $c \in \CC$, $F_0 - c$ is in $L^2(\partial \Omega, \D \sigma)$, where $F_0$ is the non-tangential limit of $F$.
        By the argument carried out in full in $n$ dimensions in 
        Corollary \ref{cor:4.7} below,  $\Tr(v) \in \dot{L}_1^2(\partial \Omega)$, which contradicts the previous conclusion that $\Tr(v) \notin \dot{L}_1^2(\partial \Omega)$.
                      \end{proof}
    \end{thm}

  \section{Estimates in all dimensions}
    In this section, we prove Theorem \ref{thm:2.1} for gradients of harmonic functions in the setting of bounded Lipschitz domains in all dimensions and with no restrictions on the Lipschitz constant.
    Recall that for an open set $\Omega$,
    \begin{align*}
      \delta_{\Omega}(X) \
      &= \ \min_{Q \in \partial \Omega} |X - Q|.
    \end{align*}

    \begin{thm}[Dahlberg \cite{D80}]
      \label{thm:4.1}
      Let $\Omega \subset \RR^{n + 1}$, $n = 1, 2, \ldots$ be a bounded Lipschitz domain.
      There exist constants $C_1$ and $C_2$ depending only on $n$ and the Lipschitz character of $\Omega$, and a compact set $K \subset \Omega$, such that
      \begin{enumerate}[label=(\alph*)]
        \item $\dist(K, \partial \Omega) = \min_{X \in K} \delta(X) \geq C_2$.
        \item If $u$ is harmonic in $\Omega$ and $\int_{\Omega} \delta_{\Omega}(X) |\nabla u(X)|^2 \, \D X < \infty$, then $u$ has non-tangential limit $f(Q)$ for almost every $Q \in \partial \Omega$ with respect to surface measure $\D \sigma$.
          Moreover, $f \in L^2(\partial \Omega, \D \sigma)$ and 
          \begin{align*}
            \int\limits_{\partial \Omega} f^2 \, \D \sigma \
            &\leq \ C_1 \left[\int_{\Omega} \delta(X) |\nabla u(X)|^2 \, \D X + \int_{K} u(X)^2 \, \D X \right].
          \end{align*}
      \end{enumerate}
    \end{thm}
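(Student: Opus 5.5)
The plan is to follow the Dahlberg--Kenig--Pipher--Verchota approach announced in the introduction: prove an a priori estimate via integration by parts against a regularized distance function, and then obtain nontangential limits by approximation. First I localize. Cover $\partial\Omega$ by finitely many coordinate cylinders $Z_j$. In each of them, $\Omega\cap Z_j$ is the region above a Lipschitz graph $t>\varphi_j(x)$ with constant $\le L$. The number of cylinders and their sizes depend only on the Lipschitz character, and this determines $K$ and $C_2$. Inside one cylinder I use the Kenig--Stein adapted distance function. Let $\theta$ be a smooth compactly supported approximate identity, and define $\rho(x,t)$ by
\begin{align*}
t-\rho(x,t)\,\cdot\,(\text{const}) = (\theta_{\rho}\ast\varphi)(x),
\end{align*}
solved by the implicit function theorem. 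Equivalently, change variables by $(x,s)\mapsto (x,\, c s+\theta_{s}\ast\varphi(x))$, which maps the half-space $\{s>0\}$ bi-Lipschitzly onto $\Omega$. The key properties are that $\rho\simeq\delta$, that $|\nabla\rho|\lesssim 1$, and that $\partial_t \rho\gtrsim 1$. The most important one is that $\delta\,|\nabla^2\rho|^2\,dX$ is a Carleson measure; this follows from the Littlewood--Paley bound $\int s|\nabla^2(\theta_s\ast\varphi)|^2\,dx\,ds\lesssim \|\nabla\varphi\|_\infty^2|Q|$ on Carleson boxes.

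Next comes the core identity. For $u$ harmonic and (temporarily) smooth up to the boundary, I apply the divergence theorem to $\mathrm{div}\bigl(\rho\,\nabla(u^2)\bigr)$ and to $\mathrm{div}\bigl(u^2\nabla\rho\bigr)$ in the region $\{\rho>\epsilon\}$ intersected with a cutoff $\psi$ supported in the cylinder. This gives an identity of the form
\begin{align*}
\int_{\{\rho=\epsilon\}}u^2\,\partial_\nu\rho\,\psi\,d\sigma \approx 2\int \rho|\nabla u|^2\psi\,dX \;-\;\int u^2\,\Delta\rho\,\psi\,dX + \text{cutoff terms}.
\end{align*}
Since $\partial_\nu\rho\gtrsim1$ on the level set, the left side controls $\int_{\rho=\epsilon}u^2\psi$. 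The cutoff terms live away from $\partial\Omega$ and are bounded by $\int_K u^2+\int_K|\nabla u|^2$. Interior estimates and the fact that $K$ is a fixed distance from the boundary then bound these by the right-hand side of (b), after enlarging $K$ slightly.

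The main obstacle is the term $\int u^2\Delta\rho$, since $\Delta\rho$ is not integrable against $u^2$ with any smallness. I would handle it by writing $\Delta\rho=\mathrm{div}\nabla\rho$ and integrating by parts once more, turning it into $\int u\nabla u\cdot\nabla\rho$-type terms. Where that is insufficient, I would pair it as $\bigl|\int u^2\Delta\rho\bigr|\le \bigl(\int\rho|\nabla u|^2\bigr)^{1/2}\bigl(\int u^2|\nabla^2\rho|^2\rho\bigr)^{1/2}$ plus boundary terms. Carleson's theorem (Theorem \ref{thm:1.5}, transplanted to $\Omega$ via the bi-Lipschitz change of variables) then gives
\begin{align*}
\int u^2\rho|\nabla^2\rho|^2\,dX \lesssim \int N(u)^2\,d\sigma .
\end{align*}
So I also need the standard bound $\|N(u)\|_{L^2(\partial\Omega)}\lesssim \sup_\epsilon\|u\|_{L^2(\{\rho=\epsilon\})}$. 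I would prove it from the subharmonicity of $u^2$ on Whitney balls together with the Hardy--Littlewood maximal inequality along the level surfaces $\rho=\epsilon$. Since these surfaces are Lipschitz graphs uniformly in $\epsilon$, the maximal function is bounded.

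With $S(\epsilon)=\int_{\rho=\epsilon}u^2\psi$, this yields $S\le C(A+B)+C(A)^{1/2}S^{1/2}$, where $A=\int\rho|\nabla u|^2$ and $B=\int_K u^2$. Absorbing gives $S\lesssim A+B$ uniformly in $\epsilon$. The absorption is legitimate because $S(\epsilon)<\infty$ for each $\epsilon>0$. Finally I remove the smoothness assumption by applying the estimate on the approximating level surfaces, which are interior to $\Omega$. The uniform $L^2$ bounds, together with the nontangential maximal bound, give $N(u)\in L^2$. Existence of the nontangential limits a.e. then follows from harmonic measure mutual absolute continuity with $d\sigma$, or more directly from weak-$*$ compactness and the Fatou theorem for Lipschitz domains.
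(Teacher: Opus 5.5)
There is a genuine gap, and it sits at the core of the argument. Green's identity with weight $\rho-\epsilon$ on $\{\rho>\epsilon\}$ reads
\[
\int_{\{\rho=\epsilon\}}u^2|\nabla\rho|\,\psi\,\D\sigma
=2\int(\rho-\epsilon)\bigl(|\nabla u|^2\psi+u\,\nabla u\cdot\nabla\psi\bigr)\,\D X
-\int u^2\,\Delta\rho\,\psi\,\D X-\int u^2\,\nabla\rho\cdot\nabla\psi\,\D X .
\]
The last two terms carry neither a factor $\nabla u$ nor the weight $\rho$. The last one is not an interior term, despite your claim. A cutoff supported in a coordinate cylinder has $\nabla\psi\neq0$ on lateral strips that reach down to $\partial\Omega$.

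You correctly flag $\int u^2\Delta\rho\,\psi$ as the main obstacle, but neither remedy works. Integrating $\Delta\rho=\mathrm{div}\,\nabla\rho$ by parts just undoes Green's identity. It returns the boundary integral you are estimating, plus $\int u\,\nabla u\cdot\nabla\rho\,\psi$, which has no weight $\rho$ and is not controlled by $A^{1/2}\|N(u)\|_{L^2}$. Your inequality $|\int u^2\Delta\rho|\le(\int\rho|\nabla u|^2)^{1/2}(\int u^2\rho|\nabla^2\rho|^2)^{1/2}+\dots$ is asserted, not derived. The natural way to create a $\nabla u$ is to integrate by parts in $y$ against $(\rho-\epsilon)\Delta\rho/\partial_y\rho$. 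That does give your term, but it also gives two bad ones:
\begin{itemize}
\item $\int u^2\rho|\nabla^2\rho|^2$, with no small constant, so it cannot be absorbed unless $L$ is small;
\item $\int u^2\rho|\nabla^3\rho|$, where $\rho|\nabla^3\rho|\,\D X$, like $|\nabla^2\rho|\,\D X$, need not be a Carleson measure for general Lipschitz $\varphi$; only $\rho|\nabla^2\rho|^2\,\D X$ is.
\end{itemize}

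The paper never creates a zeroth-order term. It starts from the vertical field $\theta^2u^2e_{n+1}$ with $\theta=\theta(x)$, using $\nu_{n+1}\gtrsim1$ on the graph. So the bulk integrand is $2\theta^2u\,\partial_yu$, and $\partial_y\theta=0$. It then inserts $\partial_y\rho/\partial_y\rho$ and integrates by parts in $y$ to gain the weight $\rho$. The only dangerous term, $\rho\theta^2u\,\partial_y^2u/\partial_y\rho$, is rewritten via $\Delta u=0$ as tangential derivatives $T_j\nabla u$ along the level sets $\cS_t$, as in \eqref{eq:18typed}. It is then integrated by parts on each $\cS_t$, as in \eqref{eq:17typed}. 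Since $T_j\rho=0$, the weight is never differentiated. Every error term has the form $\rho|\nabla u|^2$ or $\rho\,|u|\,|\nabla u|\,(|\nabla^2\rho|+|\nabla\theta^2|)$, which gives exactly $A+A^{1/2}\|N(u)\|_{L^2}$ plus interior terms. Harmonicity combined with tangential integration by parts is the idea your proposal is missing.

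A second gap is your claim that $\|N(u)\|_{L^2(\partial\Omega)}\lesssim\sup_\epsilon\|u\|_{L^2(\{\rho=\epsilon\})}$ follows from subharmonicity and the Hardy--Littlewood maximal theorem. Subharmonicity bounds $|u(X)|$ by an average over a Whitney region spanning a band of levels near $\rho(X)$. So $N(u)(Q)$ is dominated by a supremum over $t$ of maximal functions of the traces $u|_{\{\rho=t\}}$. The maximal theorem handles each level separately. It controls the supremum over $t$ only through the maximal function of a vertical maximal function of $u$, which is essentially what you are trying to bound. With $u^2$ in place of $|u|$ you would also be applying it in $L^1$, where it fails. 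The estimate is true, but it is Dahlberg's $L^2$ Dirichlet theorem, resting on the reverse H\"older inequality for harmonic measure; the paper imports it from \cite{D79} in Lemma \ref{lemma:4.6}.

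Relatedly, your local absorption $S\le C(A+B)+CA^{1/2}S^{1/2}$ compares a localized boundary integral with a maximal function on a larger patch. As in Lemma \ref{lemma:4.5}, you must first sum over the cover, then apply the global maximal estimate, and only then absorb. The remaining steps match the paper: the Kenig--Stein $\rho$, Carleson's theorem, approximation from inside, and a.e.\ limits via Hunt--Wheeden and Dahlberg's mutual absolute continuity.
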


    This theorem is due to Dahlberg \cite{D80}.  
    His proof relies on a real variable technique known as ``good-lambda'' inequalities.  We present here a more direct proof\footnote{\cite{DKPV97}      addresses systems of elliptic operators. Here, we specialize to the case of real-valued harmonic functions.} 
    from \cite{DKPV97}      that avoids using good lambda inequalities.    
    The main simplification over the proof in \cite{D80} comes from the use of an adapted distance function $\rho$, which has the same order of magnitude as the ordinary distance to the boundary, but also has further Carleson-type bounds on its second derivatives.

    \begin{lemma}[Adapted distance $\rho$]
      \label{lemma:4.2}
      Suppose that
      \begin{align*}
        \Omega \
        &= \ \left\{(x, y) \in \RR^n \times \RR : x \in \RR^n, y > \varphi(x) \right\}
      \end{align*}
      with $\varphi$ as above.
      There is a constant $C$ depending only on $L$ and $n$ and a Lipschitz continuous function $\rho$ on $\overline{\Omega}$ such that $\rho \in C^2(\Omega)$ and
      \begin{enumerate}[label=(\alph*)]
        \item $\rho(x, \varphi(x)) = 0$, $1/C \leq \p_y \rho(X) \leq C$, so that, in particular,
          \begin{align*}
            \frac{y - \varphi(x)}{C} \
            &\leq \ \rho(X) \
            \leq \ C \left(y - \varphi(x) \right) \text{ for all } X = (x, y) \in \Omega.
          \end{align*}
        \item $|\nabla \rho(X)| \leq C$ for all $X \in \Omega$.
        \item $\rho(X) |\nabla^2 \rho(X)|^2 \, \D X$ is a Carleson measure\footnote{A Carleson measure $\mu$ on $\Omega$ is defined as a measure such that for the bi-Lipschitz mapping $\Phi \colon \RR_{+}^{n + 1} \to \Omega$ given by $\Phi(x, y) = (x, y + \varphi(x))$, the pullback measure $\tilde{\mu}(E) = \mu(\Phi(E))$ is a Carleson measure on $\RR_{+}^{n + 1}$.} on $\Omega$ with norm bounded by $C$.
      \end{enumerate}
    \end{lemma}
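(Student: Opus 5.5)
The plan is to use the Kenig--Stein construction: mollify $\varphi$ at a scale proportional to the height and then solve implicitly for $\rho$. Fix a nonnegative even $\theta\in C_0^\infty(\RR^n)$ supported in the unit ball with $\int\theta=1$, and set $\theta_t(x)=t^{-n}\theta(x/t)$. Define on $\RR_{+}^{n+1}$
\begin{align*}
  F(x,t) \ &= \ c_0 t + \theta_t\ast\varphi(x),
\end{align*}
with $c_0$ a large constant depending on $L$ and $n$, to be chosen. Since $|\nabla_x(\theta_t\ast\varphi)|\le L$ and $\p_t(\theta_t\ast\varphi)=(\p_t\theta_t)\ast\varphi$, where $\p_t\theta_t$ has integral zero, we get $|\p_t(\theta_t\ast\varphi)|\le C_\theta L$. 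Choosing $c_0\ge 2C_\theta L+1$ gives
\[
c_0/2\le \p_t F\le 2c_0 .
\]
Moreover $F(x,0)=\varphi(x)$ and $F(x,t)\to\infty$ as $t\to\infty$. Thus for each $x$ the map $t\mapsto F(x,t)$ is a $C^1$ bijection of $[0,\infty)$ onto $[\varphi(x),\infty)$. We define $\rho(x,y)$ as the unique $t\ge0$ with $F(x,t)=y$. Equivalently, $\Phi(x,t)=(x,F(x,t))$ is a bi-Lipschitz map of $\overline{\RR_{+}^{n+1}}$ onto $\overline\Omega$ that is smooth in the interior, and $\rho$ is the last coordinate of $\Phi^{-1}$.

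Parts (a) and (b) then follow from the implicit function theorem. We have $\rho(x,\varphi(x))=0$ and $\p_y\rho=1/\p_tF\in[1/(2c_0),2/c_0]$. Also $\nabla_x\rho=-\nabla_xF/\p_tF$ is bounded by $2L/c_0$, since $|\nabla_x F|=|\theta_t\ast\nabla\varphi|\le L$. Integrating $\p_y\rho$ in $y$ from $\varphi(x)$ gives the comparison $\rho\simeq y-\varphi(x)$. The function $\rho$ is $C^2$, indeed $C^\infty$, in $\Omega$ because $F$ is smooth for $t>0$. It is Lipschitz on $\overline\Omega$ by the gradient bound.

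For (c), differentiate the relation $F(x,\rho(x,y))=y$ twice. Every second derivative of $\rho$ is a combination of second derivatives of $F$, evaluated at $(x,\rho)$, with coefficients that are polynomials in $\nabla F$ and $1/\p_tF$. These coefficients are bounded by constants depending only on $L$ and $n$. Hence
\[
|\nabla^2\rho(X)|\lesssim|\nabla^2_{x,t}F(x,\rho(X))| .
\]
Since $\Phi$ is bi-Lipschitz with Jacobian $\p_tF\simeq1$, it suffices, by the footnote's definition, to show that $t|\nabla^2_{x,t}(\theta_t\ast\varphi)(x)|^2\,\D x\,\D t$ is a Carleson measure on $\RR_{+}^{n+1}$ with norm $\lesssim L^2$. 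Here the pullback $\tilde\mu$ of $\rho|\nabla^2\rho|^2\D X$ is dominated by a constant times this measure, after using $\rho\circ\Phi(x,t)=t$ and the Jacobian bound.

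Each second derivative can be written as one derivative applied to $\nabla\varphi$:
\begin{align*}
  \p_{x_i}\p_{x_j}(\theta_t\ast\varphi) \ &= \ (\p_i\theta)_t\ast\p_j\varphi\,/t,\\
  \p_t\p_{x_j}(\theta_t\ast\varphi) \ &= \ \psi_t\ast\p_j\varphi\,/t,\\
  \p_t^2(\theta_t\ast\varphi) \ &= \ \sum_j\tilde\psi^{j}_t\ast\p_j\varphi\,/t .
\end{align*}
In each case the kernels $\psi$ are smooth, compactly supported, and have mean zero. The last formula uses $\p_t\theta_t=-t^{-1}\operatorname{div}_x\big((x\theta)_t\big)$. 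Thus the measure in question is a finite sum of terms $|\psi_t\ast b|^2\,\D x\,\D t/t$, where $b=\p_j\varphi\in L^\infty\subset\BMO$ with $\|b\|_{\BMO}\le 2L$.

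The main obstacle is the Carleson bound for $|\psi_t\ast b|^2\,\D x\,\D t/t$ with a general mean-zero kernel $\psi$. Theorem \ref{thm:1.7} covers only the Poisson kernel, so I would prove this version directly. Fix a cube $Q$ of side $r$ and split $b=(b-b_{2Q})\chi_{2Q}+(b-b_{2Q})\chi_{(2Q)^c}+b_{2Q}$. The constant term is killed by the mean zero of $\psi$. The far term contributes nothing on $T(Q)$, because $\psi$ has support in the unit ball and $t\le r$. For the local term, Plancherel gives $\int_0^\infty|\hat\psi(t\xi)|^2\,\D t/t\le C$ (using $\hat\psi(0)=0$ and the decay of $\hat\psi$). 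So the integral over $T(Q)$ is at most $C\|b-b_{2Q}\|_{L^2(2Q)}^2\le C|Q|\,\|b\|_{L^\infty}^2$. Here we may even use $L^\infty$ rather than BMO, since $b$ is bounded by $L$.

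This yields $\|\tilde\mu\|_{CM}\le C(L,n)$, which completes (c).
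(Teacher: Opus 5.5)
Your proposal is correct and follows the same route as the paper, which only states the Kenig--Stein construction $F(x,t)=C_nLt+\eta_t\ast\varphi(x)$, $\rho(x,F(x,t))=t$, and defers the verification to \cite{DKPV97}; your implicit differentiation, your reduction of $\nabla^2_{x,t}F$ to mean-zero kernels acting on $\nabla\varphi$, and your local Plancherel Carleson estimate supply exactly the omitted details. Two trivial fixes: in the Carleson estimate use $3Q$ rather than $2Q$, since for $x\in Q$ and $t\le r$ the ball $B(x,t)$ need not lie in $2Q$; and your $\Phi(x,t)=(x,F(x,t))$ is not the footnote's map $(x,y)\mapsto(x,y+\varphi(x))$, which is harmless because $F(x,t)-\varphi(x)\simeq t$ makes the two pullback measures have comparable Carleson norms.
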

    
    This distance function was first constructed by Dahlberg \cite{D86}, but the construction given in the published paper is a simpler one, due to Kenig and Stein.
    We present that construction here.
    Let
    \begin{align*}
      F(x, t) \
      &= \ C_n L t + \eta_t \ast \varphi(x), \quad x \in \RR^n,
    \end{align*}
    for $C_n$ a sufficiently large dimensional constant and $\eta_t(x) = t^{-n} \eta(x/t)$ is a smooth, compactly supported approximate identity on $\RR^n$.
    Then the function defined implicitly by
    \begin{align*}
      \rho(x, F(x, t)) \
      &= \ t, \quad t > 0, \quad \rho(x, \varphi(x)) = 0,
    \end{align*}
    satisfies all of the properties of Lemma \ref{lemma:4.2} (see \cite[pp. 1432-1433]{DKPV97} for further details).

    Define a cylindrical section of $\Omega$ and its bottom boundary by
    \begin{align*}
      \Omega_t \
      &= \ \left\{X = (x, y) \in \RR^n \times \RR : |x| < t, 0 < \rho(X) < t \right\}, \\
      \cB_t \
      &= \ \left\{(x, \varphi(x)) \in \RR^n \times \RR : |x| \leq t \right\}.
    \end{align*}

    \begin{defn}
      \label{defn:4.3}
      For a Lipschitz domain $\Omega$ and a function $f$ defined on $\Omega$, the non-tangential maximal function of $f$, denoted $N(f)$, is the function on $\partial \Omega$ given by
      \begin{align*}
        N(f)(Q) \
        &= \ \max_{X \in \Gamma(Q)} |f(X)|, \quad \Gamma(Q) \
        = \ \left\{X \in \Omega : |X - Q| \leq C \delta(X) \right\}.
      \end{align*}
    \end{defn}

    The constant $C$ is chosen sufficiently large depending on the dimension and the Lipschitz character of $\Omega$ so that $\Gamma(Q)$ plays the same role as a non-tangential cone when $\Omega = \RR_{+}^{n + 1}$.
    In other words,
    \begin{align*}
      \vol\left(\Gamma(Q) \cap B_r(Q) \right) \
      &\geq \ \frac{r^{n + 1}}{C}.
    \end{align*}
    Finally, define
    \begin{align*}
      A_{\Omega}(u) \
      &= \ \int\limits_{\Omega} \delta(X) \left|\nabla u \right|^2 \, \D X.
    \end{align*}

    \begin{lemma}[Main Lemma]
      \label{lemma:4.4}
      Suppose that $\Delta u = 0$ in $\Omega_3$, and that $u$, $\nabla u$, and $\nabla^2 u$ belong to $C^{\gamma}(\overline{\Omega}_3)$ for some $\gamma > 0$.
      Then there exists a constant $C$ depending only on $n$ and $L$ such that
      \begin{align*}
        \int\limits_{\cB_1} u^2 \, \D \sigma(Q) \
        &\leq \ C \left[A_{\Omega_3}(u) + A_{\Omega_3}(u)^{1/2} \left(\int_{\cB_2} N(u 1_{\Omega_2})^2 \, \D \sigma \right)^{1/2} + \int_{K} u^2 \, \D X\right]
      \end{align*}
      with
      \begin{align*}
        K \
        &= \ \left\{(x, y) \in \RR^n \times \RR : |x| \leq 3, 1 \leq \rho(X) \leq 3 \right\}.
      \end{align*}
      \begin{proof}
        Let $\cS_t$ be the level sets of $\rho$ in $\overline{\Omega}_2$, namely
        \begin{align*}
          \cS_t \
          &= \ \left\{X = (x, y) \in \RR^n \times \RR : |x| \leq 2, \rho(X) = t \right\}.
        \end{align*}
        Denote by $\D \sigma_t$ the surface measure on $\cS_t$.
        Evidently, $\cB_2 = \cS_0$ and $\D \sigma = \D \sigma_0$.

        Define $\theta \in C_0^{\infty}(\{x \in \RR^n : |x| < 2\})$ so that $\theta(x) = 1$ for all $|x| \leq 1$.
        Denote the upward-pointing unit normal to $\cS_t$ by
        \begin{align*}
          \nu(X) \
          &= \ \frac{\nabla \rho(X)}{|\nabla \rho(X)|} \
          = \ \left(\nu_1, \ldots, \nu_{n + 1} \right) \
          \in \ \RR^{n + 1}.
        \end{align*}
        The non-tangential limit of $\nu(X)$ as $X \to \partial \Omega$ exists pointwise almost everywhere $\D \sigma$ and equals the inner unit normal of $\partial \Omega$ relative to $\Omega$.
        Next, integrating by parts along vertical lines, we have
        \begin{align*}
          \int\limits_{\cB_1} u^2 \, \D \sigma \
          &\leq \ \int\limits_{\cB_2} \theta^2 u^2 \, \D \sigma \
          = \ - \int\limits_{\cS_2} \theta^2 u^2 \nu_{n + 1} \, \D \sigma_2 + \int\limits_{\{0 < \rho < 2\}} \p_y\left(\theta^2 u^2 \right) \, \D X.
        \end{align*}
        Using $\p_y \theta = 0$ and integration by parts on vertical lines, we can rewrite the second term as
        \begin{align*}
          &\int\limits_{\{0 < \rho < 2\}} \p_y \left(\theta^2 u^2 \right) \, \D X \\
          &\qquad \qquad = \ 2 \int\limits_{\{0 < \rho < 2\}} \theta^2 u \p_y u\left(\frac{\p_y \rho}{\p_y \rho} \right) \, \D X \\
          &\qquad \qquad = \ 2 \int\limits_{\{0 < \rho < 2\}} \p_y \left(\frac{\rho \theta^2 u \p_y u}{\p_y \rho} \right) \, \D X - 2 \int\limits_{\{0 < \rho < 2\}} \rho \theta^2 \p_y \left(\frac{u \p_y u}{\p_y \rho} \right) \, \D X \\
          &\qquad \qquad = \ 2 \int\limits_{\cS_2} \frac{\rho \theta^2 u \p_y u}{\p_y \rho} \, \D \sigma_2 - 2 \int\limits_{\{0 < \rho < 2\}} \rho \theta^2 \p_y \left(\frac{u \p_y u}{\p_y \rho} \right) \, \D X.
        \end{align*}
        The first of these terms in the last line satisfies
        \begin{align*}
          2 \left|\int_{\cS_2} \frac{\rho \theta^2 u \p_y u}{\p_y \rho} \, \D \sigma_2 \right| \
          &\leq \ C \max_{\cS_2} \left|u \p_y u \right| \sigma_2(\cS_2) \
          \leq \ C \int\limits_{K} u^2 \, \D X.
        \end{align*}
        Here, and from now on in this proof, the constant $C$ will denote a constant depending only on $n$ and $L$, which may change from one line to the next.
        The last inequality follows from the regularity of the harmonic function $u$.

        To bound the second term, we rewrite the integrand as
        \begin{align*}
          \rho \theta^2 \p_y \left(\frac{u \p_y u}{\p_y \rho} \right) \
          &= \ I + II + III
        \end{align*}
        with
        \begin{align*}
          I \
          &= \ \rho \theta^2 \frac{(\p_y u)^2}{\p_y \rho}, \quad II \
          = \ \rho \theta^2 \frac{u \p_y^2 u}{\p_y u}, \quad III \
          = \ -\rho \theta^2 \frac{(u \p_y u) \p_y^2 \rho}{(\p_y \rho)^2}.
        \end{align*}
        The first term is bounded by 
        \begin{align*}
          \left|\int_{\{0 < \rho < 2\}} I \, \D X \right| \
          &\leq \ C \int\limits_{\{0 < \rho < 2\}} \theta^2 \rho |\nabla u|^2 \, \D X \
          \leq \ C A_{\Omega_2}.
        \end{align*}
        The third term is bounded using the Cauchy-Schwarz inequality, the Carleson measure bound (Lemma \ref{lemma:4.2} (c)), and the Carleson theorem (Theorem \ref{thm:1.5}), which yield
        \begin{align*}
          \left|\int_{\{0 < \rho < 2\}} III \, \D X \right| \
          &\leq \ C \int\limits_{\{0 < \rho < 2\}} \theta^2 |u \nabla u| \rho |\p_y^2 \rho| \, \D X \\
          &\leq \ C \left(\int_{\Omega_2} \rho |\nabla u|^2 \, \D X \right)^{1/2} \left(\int_{\Omega_2} u^2 \rho^2 |\p_y^2 \rho| \, \D X \right)^{1/2} \\
          &\leq \ C A_{\Omega_3}^{1/2} \left(\int_{\cB_2} N(u 1_{\Omega_2})^2 \, \D \sigma \right)^{1/2}.
        \end{align*}
        It remains to bound the term with integrand II.
        For that purpose, define vector fields 
        \begin{align*}
          T_j \
          &= \ \nu_{n + 1} \p_j - \nu_j \p_y, \quad \p_j \
          = \ \frac{\p}{\p x_j}, \quad j \
          = \ 1, 2, \ldots, n
        \end{align*}
        tangent to $\cS_t$.

        We claim that for any $C^1$ functions $f_1$ and $f_2$ on $\Omega_3$,
        \begin{align}
          \int\limits_{\cS_t} \theta^2(T_j f_1) f_2 \, \D \sigma_t \
          &= \ - \int\limits_{\cS_t} f_1\left(T_j(\theta^2 f_2) \right) \, \D \sigma_t, \quad 0 < t \leq 2.
          \label{eq:17typed}
        \end{align}
        Indeed, fix $t$.
        Since $\rho(x, F(x, t)) = t$, the change of variables $x = z$, $y = F(z, t)$, $z \in \RR^n$ gives
        \begin{align*}
          \D \sigma_t \
          &= \ \sqrt{1 + |\nabla_z F(z, t)|^2} \, \D z, \quad \nu \
          = \ \frac{(- \nabla_z F, 1)}{\sqrt{1 + |\nabla_z F|^2}},
        \end{align*}
        and
        \begin{align*}
          \frac{\p f}{\p z_j} \
          &= \ \p_j f + \frac{\p F}{\p z_j} \p_y f \\
          &= \ \sqrt{1 + |\nabla_z F(z, t)|^2} \left(\nu_{n + 1} \p_j - \nu_j \p_y \right) f \\
          &= \ \sqrt{1 + |\nabla_z F(z, t)|^2} T_j f.
        \end{align*}
        Therefore,
        \begin{align*}
          \int\limits_{\cS_t} \theta^2\left(T_j f_1 \right) f_2 \, \D \sigma_t \
          &= \ \int\limits_{|z| < 2} \theta(z)^2 \left(\frac{\p}{\p z_j} f_1\left(z, F(z, t) \right)\right) f_2\left(z, F(z, t) \right) \, \D z \\
          &= \ - \int\limits_{|z| < 2} f_1\left(z, F(z, t) \right) \frac{\p}{\p z_j} \left(\theta(z)^2 f_2(z, F(z, t)) \right) \, \D z \\
          &= \ - \int\limits_{\cS_t} f_1 T_j\left(\theta^2 f_2 \right) \, \D \sigma_t.
        \end{align*}
        Next, since $\nu_{n + 1}^2 + \sum_{j=1}^{n} \nu_j^2 = 1$ and $\Delta u = 0$, we have
        \begin{align}
          \p_y^2 u \
          &= \ \sum_{j=1}^{n} \nu_j T_j \p_y u + \nu_{n + 1} \sum_{j=1}^{n} T_j \p_j u.
          \label{eq:18typed}
        \end{align}
        Finally, using the co-area formula \cite{EG2025}, \eqref{eq:18typed}, \eqref{eq:17typed}, and the property that $T_j \rho = 0$, we find that
        \begin{align*}
          &\left|\int_{\{0 < \rho < 2\}} II \, \D X \right| \\
          &\qquad = \ \left|\sum_j \int_{0}^{2} \int_{\cS_t} \rho(\p_y u) T_j \left(\frac{\theta^2 u \p_j \rho}{|\nabla \rho|^2 \p_y \rho} \right) + \rho(\p_j u) T_j \left(\frac{\theta^2 u}{|\nabla \rho|^2} \right) \, \D \sigma_t \, \D t \right| \\
          &\qquad \leq \ C \int\limits_{\Omega_2} \rho |\nabla u|^2 \, \D X + C \int\limits_{\Omega_2} |u| \, |\nabla u| (\rho |\nabla^2 \rho|) \, \D X + C \int\limits_{\Omega} \rho |u| \, |\nabla u| \, |\nabla \theta^2| \, \D X \\
          &\leq \ C A_{\Omega_3}(u) + C A_{\Omega_3}(u)^{1/2} \left(\int_{\cB_2} N(u 1_{\Omega_2}) \, \D \sigma \right)^{1/2}.
        \end{align*}
        The first inequality above follows by grouping terms of three types.
        The first type is the ones in which $T_j$ acts on $u$.
        The second type is the ones for which $T_j$ acts on $\p_j \rho$, $\p_y \rho$, and $|\nabla \rho|^2$.
        The last type arises when $T_j$ acts on $\theta^2$.
        For the second inequality, the first term is dominated by $C A_{\Omega_3}$ simply because $\Omega_2 \subset \Omega_3$.
        The second term is controlled in exactly the same way as term $III$ above.
        The final term is estimated using only elementary inequalities.
        The Cauchy-Schwarz inequality implies
        \begin{align*}
          \int\limits_{\Omega_2} \rho |u| \, |\nabla u| \, |\nabla \theta^2| \, \D X \
          &\leq \ \left(\int_{\Omega_2} \rho |\nabla u|^2 \, \D X \right)^{1/2} \left(\int_{\Omega_2} \rho u^2 \, \D X \right)^{1/2}.
        \end{align*}
        Moreover, since $|u(z, F(z, t))| \leq N(u)(z, \varphi(z))$ and $\sqrt{1 + |\nabla_z F(z, t)|^2} \leq C$,
        \begin{align*}
          \int\limits_{\Omega_2} \rho u^2 \, \D X \
          &= \ \int\limits_{0}^{2} t \int\limits_{\cS_t} u^2 \frac{\D \sigma_t}{|\nabla \rho|} \, \D t \\
          &\leq \ C \int\limits_{0}^{2} t \, \D t \int\limits_{\cB_2} N(u 1_{\Omega_2})^2 \, \D \sigma \\
          &= \ C \int\limits_{\cB_2} N(u 1_{\Omega_2})^2 \, \D \sigma.
        \end{align*}
        Thus, the third term is dominated by the same expression as the second term.
        This concludes the proof of Lemma \ref{lemma:4.4}.
      \end{proof}
    \end{lemma}

    Recall that on a Lipschitz domain $\Omega$, we define
    \begin{align*}
      \delta(X) \
      &= \ \min_{Q \in \partial \Omega} |X - Q|, \quad \Gamma(Q) \
      = \ \left\{X \in \Omega : |X - Q| \leq C \delta(X) \right\}
    \end{align*}
    for $C$ a sufficiently large constant depending only on the Lipschitz character of $\Omega$.

    \begin{lemma}
      \label{lemma:4.5}
      Let $\Omega$ be a bounded Lipschitz domain in $\RR^{n + 1}$, $n \geq 1$.
      There exist constants $C_1, C_2 > 0$ depending only on $n$ and the Lipschitz character of $\Omega$, and a compact set $K \subset \Omega$, such that
      \begin{enumerate}[label=(\alph*)]
        \item $\dist(X, \partial \Omega) \geq C_2$ for all $X \in K$.
        \item If $u$ is harmonic in $\Omega$ and $u$, $\nabla u$, and $\nabla^2 u$ belong to $C^{\gamma}(\overline{\Omega})$ for some $\gamma > 0$, then
          \begin{align*}
            \int\limits_{\partial \Omega} u^2 \, \D \sigma \
            &\leq \ C_1 \left[A_{\Omega}(u) + A_{\Omega}(u)^{1/2} \left(\int_{\partial \Omega} N_{\Omega}(u)^2 \, \D \sigma \right)^{1/2} + \int_{K} u^2 \, \D X \right].
          \end{align*}
      \end{enumerate}
    \end{lemma}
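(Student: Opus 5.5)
Lemma \ref{lemma:4.5} is the global version of the Main Lemma \ref{lemma:4.4}, so the plan is a partition-of-unity argument: cover $\partial\Omega$ by finitely many coordinate cylinders and apply Lemma \ref{lemma:4.4} in each.

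\emph{Step 1: local charts.} Since $\Omega$ is a bounded Lipschitz domain, there are finitely many boundary points $Q_1,\dots,Q_m$, a scale $r>0$, and rigid motions $R_k$ with the following property. After $R_k$ and dilation by $r^{-1}$, $\Omega$ coincides near $Q_k$ with the region above a Lipschitz graph $y>\varphi_k(x)$, with $\|\nabla\varphi_k\|_\infty\le L$. Enlarging the cylinder if necessary (extend $\varphi_k$ to all of $\RR^n$ with the same Lipschitz constant), the sets $\Omega^{(k)}_3$ built from the adapted distance $\rho_k$ of Lemma \ref{lemma:4.2} lie inside $\Omega$. The bottoms $\cB^{(k)}_1$ cover $\partial\Omega$. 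The number $m$, the scale $r$ and $L$ are controlled by the Lipschitz character.

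Define $K$ as the union of the images of the sets $K^{(k)}=\{|x|\le 3,\ 1\le\rho_k\le 3\}$. This is compact, and by Lemma \ref{lemma:4.2}(a) each point of $K$ has $\rho_k\ge1$, hence $y-\varphi_k(x)\ge 1/C$. That gives $\delta\ge C_2$ after undoing the scaling, which is part (a).

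\emph{Step 2: apply the Main Lemma in each chart.} Because $\Delta$ is invariant under rigid motions and dilations, $u\circ R_k^{-1}$ is harmonic in $\Omega^{(k)}_3$. The H\"older regularity assumed in (b) is exactly what Lemma \ref{lemma:4.4} requires. It gives
\[
\int_{\cB^{(k)}_1}u^2\,\D\sigma\le C\Big[A_{\Omega^{(k)}_3}(u)+A_{\Omega^{(k)}_3}(u)^{1/2}\Big(\int_{\cB^{(k)}_2}N(u1_{\Omega^{(k)}_2})^2\,\D\sigma\Big)^{1/2}+\int_{K^{(k)}}u^2\,\D X\Big].
\]
The local quantities must then be compared with the global ones.

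\emph{Step 3: comparison with global quantities.} This is the main (though mild) obstacle.
\begin{enumerate}[label=(\roman*)]
\item For $X\in\Omega^{(k)}_3$, the distance to the graph satisfies $\delta_{\Omega^{(k)}}(X)\simeq y-\varphi_k(x)\simeq\delta_\Omega(X)$, as long as the cylinder is chosen so that the rest of $\partial\Omega$ is farther away. Hence $A_{\Omega^{(k)}_3}(u)\le C A_\Omega(u)$.
\item For the maximal function, one must check that every point of $\Omega^{(k)}_2$ lying in the local cone over $Q\in\cB^{(k)}_2$ also lies in the global cone $\Gamma(Q)$. This holds once the aperture constant $C$ in the definition of $\Gamma$ is taken large depending on the Lipschitz character. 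Therefore $N(u1_{\Omega^{(k)}_2})\le N_\Omega(u)$ on $\cB^{(k)}_2$.
\item Surface measures are comparable under the rescaling, with factors $r^{n}$. The various powers of $r$ are absorbed into $C_1$, which depends on the Lipschitz character.
\end{enumerate}

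\emph{Step 4: summation.} Sum the local estimates over $k=1,\dots,m$, using $\int_{\partial\Omega}u^2\le\sum_k\int_{\cB^{(k)}_1}u^2$. Bound each local term by the corresponding global term. Collect the $A^{1/2}(\cdot)^{1/2}$ terms using $\sum_k a_k^{1/2}b_k^{1/2}\le m\,A_\Omega^{1/2}\big(\int N_\Omega(u)^2\big)^{1/2}$. The resulting constant $C_1$ grows like $m$ times the constant of Lemma \ref{lemma:4.4}, and therefore depends only on $n$ and the Lipschitz character.
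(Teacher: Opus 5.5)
Your proposal is correct and follows the paper's route. The paper only says that Lemma~\ref{lemma:4.5} follows from Lemma~\ref{lemma:4.4} by an easy covering argument, and your steps supply exactly the omitted details: charts at a scale small enough that $\Omega^{(k)}_3$ sits well inside $\Omega$, the comparison $\delta_{\Omega^{(k)}}\simeq\delta_\Omega$, a larger global cone aperture so that $N(u1_{\Omega^{(k)}_2})\le N_\Omega(u)$, absorbing into $C_1$ the extra factor $r^{-1}$ that rescaling puts on the $\int_K u^2$ term, and summation over the charts.
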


    Lemma \ref{lemma:4.5} follows from Lemma \ref{lemma:4.4} by an easy covering argument.

    \begin{lemma}
      \label{lemma:4.6}
      Under the assumptions of Lemma \ref{lemma:4.5}, there exist constants $C_3, C_4 > 0$ depending only on $n$ and the Lipschitz character of $\Omega$, and a compact set $K \subset \Omega$, such that $\dist(X, \partial \Omega) \geq C_3$ for all $X \in K$, and
      \begin{align*}
        \int\limits_{\partial \Omega} N(u)^2 \, \D \sigma \
        &\leq \ C_4 \left[A_{\Omega}(u) + \int_{K} u^2 \, \D X \right].
      \end{align*}
      \begin{proof}
        By work of Dahlberg \cite{D79}, every harmonic function $u$ on a bounded Lipschitz domain that is continuous up to the boundary satisfies
        \begin{align*}
          \int\limits_{\partial \Omega} N_{\Omega}(u)^2 \, \D \sigma \
          &\leq \ C \int\limits_{\partial \Omega} u^2 \, \D \sigma
        \end{align*}
        with $C$ a constant depending only on the dimension and the Lipschitz character of $\Omega$.
        This combined with Lemma \ref{lemma:4.5} implies
        \begin{align*}
          \int\limits_{\partial \Omega} N_{\Omega}(u)^2 \, \D \sigma \
          &\leq \ C C_1 \left[A_{\Omega}(u) + A_{\Omega}(u)^{1/2} \left(\int_{\partial \Omega} N(u)^2 \, \D \sigma \right)^{1/2} + \int_K u^2 \, \D X \right] \\
          &\leq \ C C_1 \left[A_{\Omega}(u) + \frac{1}{\epsilon} A_{\Omega}(u) + \epsilon \int_{\partial \Omega} N_{\Omega}(u)^2 \, \D \sigma + \int_K u^2 \, \D X \right].
        \end{align*}
        Since $\int_{\partial \Omega} N_{\Omega}(u)^2 \, \D \sigma < \infty$, we can choose $\epsilon = 2/C C_1$, and subtract, to show that
        \begin{align*}
          \int\limits_{\partial \Omega} N_{\Omega}(u)^2 \, \D \sigma \
          &\leq \ 2 C C_1 \left[(1 + 1/\epsilon) A_{\Omega}(u) + \int_{K} u^2 \, \D X \right],
        \end{align*}
        which proves Lemma \ref{lemma:4.6}.
      \end{proof}
    \end{lemma}

    \begin{proof}[Conclusion of the proof of Theorem \ref{thm:4.1}]

      We now deduce our theorem using approximation.
      Following Ne\u cas \cite{N2012} and Verchota \cite{V84}, there exist $C^{\infty}$ domains $U_j$ such that $\overline{U}_j \subset \Omega$ and $U_j$ have uniformly bounded Lipschitz character.
      Furthermore, there are Lipschitz homeomorphisms $\Lambda_j \colon \partial \Omega \to \partial U_j$ with $|Q - \Lambda_j(Q)| \to 0$ uniformly for $Q \in \partial \Omega$ as $j \to \infty$.

      Consider a harmonic function $u$ on $\Omega$ such that $A_{\Omega}(u) < \infty$.
      Since $u \in C^{\infty}(\overline{U}_j)$ for all $j$, we can apply Lemma \ref{lemma:4.6} to obtain
      \begin{align*}
        \int\limits_{\partial U_j} N_{U_j}(u)^2 \, \D \sigma_j \
        &\leq \ C_3 \left(A_{U_j}(u) + \int_K u^2 \, \D x\right) \
        \leq \ C_3 \left(A_{\Omega}(u) + \int_K u^2 \, \D x \right)
      \end{align*}
      for some compact $K \subset \Omega$ bounded away from $\partial \Omega$.
      For each fixed $\epsilon > 0$, the truncated maximal function
      \begin{align*}
        N_{\epsilon}(u)(Q) \
        &\coloneqq \ \sup\left\{|u(X)| : X \in \Gamma_{\Omega}(Q), |X - Q| > \epsilon \right\}
      \end{align*}
      satisfies 
      \begin{align*}
        N_{\epsilon}(u)(Q) \
        &\leq \ N_{U_j}(u)(\Lambda_j(Q))
      \end{align*}
      for sufficiently large $j$ depending on $\epsilon$.
      Since $N_{\epsilon}(u)(Q)$ is increasing in $\epsilon$ and tends to $N_{\Omega}(Q)$ for each $Q$, the monotone convergence theorem implies
      \begin{align*}
        \int\limits_{\partial \Omega} N_{\Omega}(u)^2 \, \D \sigma \
        &\leq \ C_3 \left(A_{\Omega}(u) + \int_{K} u^2 \, \D x \right).
      \end{align*}
      A theorem of Hunt and Wheeden \cite{HW68} says that the non-tangential limit
      \begin{align*}
        \lim_{X \in \Gamma_{\Omega}(Q)} u(X) \
        &= \ f(Q)
      \end{align*}
      exists almost everywhere with respect to harmonic measure on $\partial \Omega$.
      A theorem of Dahlberg \cite{D77} says that harmonic measure and surface measure $\D \sigma$ are mutually absolutely continuous.
      Thus, by the dominated convergence theorem, $|f(Q)| \leq N_{\Omega}(u)(Q)$ almost everywhere $\D \sigma$, $f \in L^2(\partial \Omega, \D \sigma)$, and 
      \begin{align*}
        \int\limits_{\partial \Omega} f^2 \, \D \sigma \
        &\leq \ C_3 \left(\int_{\Omega} \delta_{\Omega}(X) |\nabla u(X)|^2 \, \D X + \int_{K} u^2 \, \D x \right)
      \end{align*}
      as desired.
    \end{proof}

    \begin{cor}
      \label{cor:4.7}
      Let $\Omega$ be a bounded Lipschitz domain in $\RR^{n + 1}$, $n \geq 1$.
      Assume that $u$ is harmonic in $\Omega$ and $u \in L_{3/2}^2(\Omega)$.
      Denote by $f$ the non-tangential pointwise limit of $u$, almost everywhere $\D \sigma$.
      Then $f \in L_1^2(\partial \Omega)$ and $f = \Tr(u)$, where $\Tr(u)$ is the Sobolev trace of $u$ on $\partial \Omega$.
      \begin{proof}
        Let $W_j = \frac{\p u}{\p x_j}$ for $j = 1, 2, \ldots, n + 1$.
        Then $W_j \in L_{1/2}^2(\Omega)$, and by \cite[Theorem 4.2]{JK95}, the area integral square norm is equivalent modulo constants to the square of the $L_{1/2}^2(\Omega)$ norm, and consequently,
        \begin{align*}
          \int\limits_{\Omega} \delta_{\Omega}(X) \left|\nabla W_j \right|^2 \, \D X \
          &< \ \infty.
        \end{align*}
        Thus, as in the proof of Theorem \ref{thm:3.1},
        \begin{align*}
          \int\limits_{\partial \Omega} N_{\Omega}(W_j)^2 \, \D \sigma \
          &< \ \infty.
        \end{align*}
        Moreover, for almost every $Q \in \partial \Omega$ with respect to $\D \sigma$, $\nabla u$ has a non-tangential limit $\vec{W}(Q) = (W_1, W_2, \ldots, W_{n + 1})$ with $W_j \in L^2(\partial \Omega, \D \sigma)$.

        To show that $\Tr(u) \in L_1^2(\partial \Omega)$, it will suffice to prove it for $u$ defined in a coordinate chart.
        \begin{lemma}
          \label{lemma:4.8}
          Let $U_1 \subset \RR^{n + 1}$ be the unit cylinder and $\cD_1 \subset \RR^n$ the unit disk, defined by
          \begin{align*}
            U_1 \
            &= \ \left\{(z, t) \in \RR^n \times \RR : |z| < a, 0 < t < 1 \right\}; \quad \cD_1 \
            = \ \left\{z \in \RR^n : |z| < 1 \right\}.
          \end{align*}
          Suppose $v$ is a locally Lipschitz continuous function on $U_1$ belonging to the Sobolev space $L_1^2(U_1)$.
          Suppose, in addition, that there exists an $\RR^n$-valued function $\vec{V} \in L^2(\cD_1, \D z)$ such that
          \begin{align*}
            \lim_{t \to 0^{+}} \int\limits_{\cD_1} \left|\nabla_z v(z, t) - \vec{V}(z) \right|^2 \, \D z \
            &= \ 0.
          \end{align*}
          Then
          \begin{enumerate}[label=(\alph*)]
            \item There exists $v_0 \in L^2(\cD_1, \D z)$ such that $v_0(z) = \lim_{t \to 0^{+}} v(z, t)$ for almost every $z \in \cD_1$ with respect to $\D z$.
            \item $\lim_{t \to 0^{+}} \int_{\cD_1} |v(z, t) - v_0(z)|^2 \, \D z = 0$.
            \item $v_0(z) = \Tr(v)(z, 0)$ for almost every $z \in \cD_1$, where $\Tr(v)$ is the Sobolev trace of $v$ on the bottom boundary $\{t = 0\} \cap \partial U_1$.
            \item $\nabla_z v_0(z) = \vec{V}(z)$.
              In particular, $v_0 \in L_1^2(\cD_1)$.
          \end{enumerate}
          \begin{proof}
            Since $v$ is absolutely continuous in $t$ for $t > 0$, we have
            \begin{align*}
              v_0(z) \
              &= \ -v(z, 1/2) - \int\limits_{0}^{1/2} \p_t v(z, t) \, \D t \text{ exists and } \ v_0(z) = \lim_{t \to 0^{+}} v(z, t),
            \end{align*}
            provided we can show that the integral is convergent.
            In fact, the bound 
            \[\int_{U_1} |\nabla v|^2 \, \D z \, \D t < \infty \] and Fubini's theorem imply that the integral is absolutely convergent for almost every $z$ with respect to $\D z$.
            Furthermore, for all $0 < t_1 < t_2 < 1$,
            \begin{align*}
              \left|v(x, t_2) - v(x, t_1) \right|^2 \
              &= \ \left|\int_{t_1}^{t_2} \p_t v(x, t) \, \D t \right|^2 \\
              &\leq \ \left(t_2 - t_1 \right) \int\limits_{t_1}^{t_2} \left(\p_t v(x, t) \right)^2 \, \D t \\
              &\leq \ \left(t_2 - t_1 \right) \int\limits_{0}^{1} \left(\p_t v(x, t) \right)^2 \, \D t.
            \end{align*}
            Integrating in $z$, we have
            \begin{align*}
              \int\limits_{\cD_1} \left|v(z, t_2) - v(z, t_1) \right|^2 \, \D z \
              &\leq \ \left(t_2 - t_1 \right) \int\limits_{U_1} \left|\p_t v(z, t) \right|^2 \, \D z \, \D t.
            \end{align*}
            Using a telescoping sum, it follows that
            \begin{align*}
              \lim_{j \to \infty} \int\limits_{\cD_1} \left|v(z, 2^{-j}) - v_0(z) \right|^2 \, \D z \
              &= \ 0,
            \end{align*}
            and from this, one easily deduces not only that $v_0 \in L^2(\cD_1, \D z)$, but also part (b).

            To prove (c), note that the subspace of Lipschitz continuous functions on $\overline{U}_1$ is dense in $L_1^2(U_1)$.
            The reasoning above shows that for $v$ in this subspace,
            \begin{align*}
              \int\limits_{\cD_1} \left|v(z, s) - v(z, 0) \right|^2 \, \D z \
              &\leq \ s \int\limits_{U_1} \left|\nabla v(z, t) \right|^2 \, \D z \, \D t.
            \end{align*}
            The ordinary restriction of such continuous functions coincides with the Sobolev trace on every set $\{t = s\}$.
            Then by continuity of the Sobolev trace operation from $L_1^2(U_1)$ to $L^2(\{t = s\})$, we can pass from the dense class to all of $L_1^2(U_1)$, establishing (c).

            Finally, we prove part (d) by a limiting argument.
            Since as $t \to 0^{+}$,
            \begin{align*}
              v(\cdot, t) \to v_0(\cdot) \text{ and } \nabla_z v(\cdot, t) \to \vec{V}(\cdot) \text{ in } L^2(\D z),
            \end{align*}
            the distribution $\nabla_z v_0$ is defined for test functions $g \in C_0^{\infty}(\cD_1)$ by
            \begin{align*}
              \left(\nabla_z v_0 \right)(g) \
              &= \ -\int\limits_{\cD_1} v_0(z) \nabla_z g(z) \, \D z \
              = \ -\lim_{t \to 0^{+}} \int\limits_{\cD_1} v(z, t) \nabla_z g(z) \, \D z \\
              &= \ \lim_{t \to 0^{+}} \nabla_z v(z, t) g(z) \, \D z \
              = \ \int\limits_{\cD_1} \vec{V}_j(z) g(z) \, \D z
            \end{align*}
            as desired.
          \end{proof}
        \end{lemma}
        We can now deduce Corollary \ref{cor:4.7}. 
        For any Lipschitz function $\varphi$, consider the mapping
        \begin{align*}
          \Psi \colon U_1 \to \RR^{n + 1} \text{ given by } \Psi(z, t) \
          &= \ (z, t + \varphi(z))
        \end{align*}
        and define
        \begin{align*}
          \Omega_1 \
          &= \ \Psi\left(U_1 \right), \quad \cB_1 \
          = \ \left\{\Psi(z, 0) : z \in \cD_1 \right\}.
        \end{align*}
        Every coordinate chart of $\Omega$ in the neighborhood of $\partial \Omega$ can be written in this form up to rigid motion and dilation.
        If $\Omega_1$ is a coordinate chart of $\Omega$ with $\cB_1 \subset \partial \Omega$, and $u$ satisfies the hypotheses of Corollary \ref{cor:4.7}, then $v(z,t) = u(\Psi(z, t))$ satisfies the hypotheses of Lemma \ref{lemma:4.8}.
        Indeed,
        \begin{align*}
          \nabla_z v(z, t) \
          &= \ \nabla_x u(x, y) + \nabla_z \varphi(z) \p_y u(x, y) \text{ for } (x, y) = \Psi(z, t), \quad (z, t) \in U_1,
        \end{align*}
        so that $\nabla v \in L_{loc}^{\infty}(U_1)$ and $v \in H^1(U_1)$.
        Recall that $\vec{W} = \nabla_{xy} u$.
        The estimate above for $N_{\Omega}(W_j)$, $1 \leq j \leq n + 1$, and the dominated convergence theorem imply
        \begin{align*}
          \lim_{t \to 0} \int\limits_{\cD_1} \left|\frac{\p v}{\p z_j}(z, t) - \left[W_j(\Psi(z, 0)) + \frac{\p \varphi}{\p z_j} W_{n + 1}(\Psi(z, 0)) \right]\right|^2 \sqrt{1 + |\nabla_z \varphi(z)|^2}
          = 0.
        \end{align*}
        Thus, the final hypothesis of Lemma \ref{lemma:4.8} is satisfied with 
        \begin{align*}
          V_j(z) \
          &= \ W_j(\Psi(z, 0)) + \frac{\p \varphi}{\p z_j}(z) W_{n + 1}(\Psi(z, 0)), \quad j = 1, \ldots, n.
        \end{align*}
        Applying the lemma, we have, in particular, $v_0 \in L_1^2(\cD_1)$.
        Finally, the bi-Lipschitz change of variables $\Psi(\cdot, 0)$ preserves non-tangential limits and sends $L_1^2(\D z)$ to $L_1^2(\cB_1)$.
        This concludes the proof of Corollary \ref{cor:4.7}.
      \end{proof}
    \end{cor}

    For completeness, we show that the Sobolev trace and the non-tangential limit also coincide at lower regularity, all the way down to $L_{\alpha}^2$ for $\alpha > 1/2$.
    At the endpoint $\alpha = 1/2$, the Sobolev trace is no longer defined.

    \begin{prop}
      \label{prop:4.9}
      If $u \in L_{\alpha}^2(\Omega)$ for some $\alpha > 1/2$, and $u$ is harmonic, then $u$ has non-tangential limits $f \in L^2(\partial \Omega, \D \sigma)$ and $f$ coincides with the Sobolev space trace
      \begin{align*}
        f \
        &= \ \Tr(u).
      \end{align*}
      \begin{proof}
        Without loss of generality, assume in addition that $\alpha \leq 1$ (higher regularity will play no role in the proof).
        As in Lemma \ref{lemma:4.4}, it will suffice to treat one unit-sized coordinate chart.
        Consider a Lipschitz function $\varphi \colon \RR^n \to \RR$ and define
        \begin{align*}
          U \
          &= \ \left\{(x, y) \in \RR^{n} \times \RR : \varphi(x) < y < \varphi(x) + 3, \ |x| < 3 \right\}, \\
          \cB_1 \
          &= \ \left\{(x, y) \in \RR^{n} \times \RR : |x| < 1, y = \varphi(x) \right\}.
        \end{align*}
        Then consider a cut-off function $\theta \in C_0^{\infty}(U)$ such that $\theta = 1$ on $|x| < 2$, $|y - \varphi(x)| < 2$.
        Define
        \begin{align*}
          v(x, t) \
          &= \ \theta^2(x, t) u(x, \varphi(x) + t), \quad t > 0.
        \end{align*}
        The bi-Lipschitz change of variables preserves the Sobolev spaces $H^{\alpha}$ for $0 < \alpha \leq 1$.

        \begin{lemma}
          \label{lemma:4.10}
          Suppose that $1/2 < \gamma < 3/2$ and $v \in L_{\gamma}^2(\RR^{n + 1})$.
          Then
          \begin{align*}
            \int\limits_{\RR^n} \left|v(x, t) - v(x, 0) \right|^2 \, \D x \
            &\leq \ \int\limits_{\RR} \int\limits_{\RR^n} \left|\hat{v}(\xi, \tau) \right|^2 |(\xi, \tau)|^{2 \gamma} \, \D \xi \, \D \tau \\
            &= \ C y^{2 \gamma - 1} \|v\|_{H^{\gamma}(\RR^{n + 1})}^2.
          \end{align*}
          \begin{proof}
            By Plancherel's theorem and the Cauchy-Schwarz inequality,
            \begin{align*}
              &\int_{\RR^n} \left|v(x, t) - v(x, 0) \right|^2 \, \D x \\
              &= \ \int_{\RR^n} \left|\int_{\RR} (e^{\i t r} - 1) \hat{v}(\xi, \tau) \, \D \tau \right|^2 \, \D \xi \\
              &\leq \ \int_{\RR^n} \left(\int_{\RR} |e^{\i t \tau} - 1|^2(|\xi| + |\tau|)^{-2 \gamma} \, \D \tau \right) \left(\int_{\RR} |\hat{v}(\xi, \tau)|^2 (|\xi| + |\tau|)^{2 \gamma} \, \D \tau \right) \, \D \xi \\
              &\leq \ C_{\gamma} t^{2 \gamma - 1} \int_{\RR^n} \int_{\RR} |\hat{u}(\xi, \tau)|^2 (|\xi| + |\tau|)^{2 \gamma} \, \D \tau \, \D \xi.
            \end{align*}
            The second inequality follows from the computation
            \begin{align*}
              \int\limits_{\RR} \left|e^{\i t \tau} - 1 \right|^2 (|\xi| + |\tau|)^{-2 \gamma} \, \D \tau \
              &\leq \ \int\limits_{\RR} |e^{\i t \tau} - 1|^2 |\tau|^{-2 \gamma} \, \D \tau \\
              &= \ t^{2 \gamma - 1} \int\limits_{\RR} \left|e^{\i k} - 1 \right|^2 |k^{-2 \gamma} \, \D k \\
              &= \ C_{\gamma} t^{2 \gamma - 1}.
            \end{align*}
            The integral over $k \in \RR$ is finite because $1/2 < \gamma < 3/2$.
          \end{proof}
        \end{lemma}

        The lemma says, in particular, that the Sobolev trace $v(\cdot, t)$ is continuous in the variable $t$ in the $L^2(\RR^n)$ norm. (In fact, in our case, $v$ is continuous for $t > 0$, so the trace is defined pointwise everywhere in the strict upper half plane and equals its Sobolev trace. The purpose of the lemma is to show this continuous function converges in $L^2(\RR^n)$ norm to the trace boundary values.)

        Since $u \in L_{\alpha}^2(\Omega) \subset L_{1/2}^2(\Omega)$, the same argument as in Corollary \ref{cor:4.7}, applied to $u$ instead of $w$, shows that $u$ has non-tangential limits $f$.
        By the dominated convergence theorem, and the fact that $\D \sigma$ is comparable to $\D x$ on $y = \varphi(x)$, we have
        \begin{align*}
          \lim_{t \to 0^{+}} \int\limits_{\{|x| < 1\}} \left|v(x, t) - f(x) \right|^2 \, \D x
          &= \lim_{t \to 0^{+}} \int\limits_{\{|x| < 1\}} \left|u(x, \varphi(x) + t) - f(x) \right|^2 \, \D x
          = 0.
        \end{align*}
        Thus, $f(x) = v(x, 0)$ is the Sobolev trace.
      \end{proof}
    \end{prop}

  \appendix
  \section{Appendix}
    As we mentioned in Section 2, we deferred until now a second proof of Theorem \ref{thm:2.1} for graphs in the plane above a Lipschitz function with arbitrary 
    Lipschitz constant $L$.
    The proof consists of using the conformal mapping of $\RR_{+}^2$ to $\Omega$ to transform the estimate of Theorem 2.1 into a special case of a weighted inequality due to Gundy and Wheeden \cite{GW74}.
    We will use notation as in Section 2.
    Recall that non-tangential cones in $\RR_{+}^2$ are denoted by
    \begin{align*}
      \Gamma((x, 0)) \
      &= \ \left\{(x', y') : |x' - x| < y \right\}.
    \end{align*}
    For $u$ defined in $\RR_{+}^2$, the Lusin area function is defined by
    \begin{align*}
      S(u)(x)^2 \
      &= \ \int\limits_{\Gamma((x, 0))} \left|\nabla u(x', y') \right|^2 \, \D x' \, \D y'.
    \end{align*}
    Recall that the non-tangential maximal function is defined by
    \begin{align*}
      N(u)(x) \
      &= \ \sup_{Y \in \Gamma((x, 0))} \left|u(Y) \right|.
    \end{align*}
    The special case of the Gundy-Wheeden theorem that we need is as follows.

    \begin{thm}
      \label{thm:A.1}
      Suppose that $w$ is an $A_{\infty}$ weight on $\RR$, and $u$ is a harmonic function in the upper half-plane $\RR_{+}^2$.
      If $S(u) \in L^2(w \, \D x)$, then there is a unique constant $c$ such that $u(x, y) - c$ tends to zero as $y \to \infty$, the non-tangential maximal function of $u - c$ belongs to $L^2(w \, \D x)$, and 
      \begin{align*}
        \int\limits_{\RR} N(u - c)^2 w(x) \, \D x \
        &\leq \ C \int\limits_{\RR} S(u)(x)^2 \, \D x,
      \end{align*}
      with a constant depending only on the $A_{\infty}$ constant of the weight $w$.
    \end{thm}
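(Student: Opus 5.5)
The plan is to follow the Burkholder--Gundy good-$\lambda$ method, transplanted to the weight $w$. (I read the right-hand side as $\int_{\RR} S(u)^2 w \, \D x$; the weight is evidently missing from the display.)

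\emph{Normalization.} First fix $c$. Since $S(u) \in L^2(w\,\D x)$, $S(u)$ is finite almost everywhere. For harmonic $u$, subharmonicity of $|\nabla u|^2$ gives
\[
y^2 |\nabla u(x+\i y)|^2 \lesssim \int_{\Gamma((x',0))\cap\{y' > y/2\}} |\nabla u|^2 \, \D x' \, \D y'
\]
for $|x'-x|<y/2$. Because $w \in A_\infty$ forces $w(\RR)=\infty$ and $S(u)\in L^2(w)$, the tails of the area integrals force $\int_{y}^{\infty} |\nabla u(x+\i s)|\,\D s<\infty$ and uniform oscillation decay. This produces the constant $c$ with $u - c \to 0$ as $y \to \infty$, in the same way as Proposition \ref{prop:A.2}. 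Replace $u$ by $u-c$.

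\emph{A priori finiteness.} To justify later subtraction, work with $u_\epsilon(x,y) = u(x, y+\epsilon)$ and a truncated maximal function $N_{\epsilon,R}$, where the supremum is taken only over points of height at most $R$ and distance at most $R$ from the origin. These are bounded, so the distribution-function integrals below are finite. We also have $S(u_\epsilon) \le S_{\text{wide}}(u)$, where $S_{\text{wide}}$ is a wider-aperture area function, and the $A_\infty$ property gives $\|S_{\text{wide}}(u)\|_{L^2(w)} \lesssim \|S(u)\|_{L^2(w)}$ (standard aperture change for doubling weights). At the end let $R \to \infty$ and $\epsilon \to 0$ by monotone convergence.

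\emph{Good-$\lambda$ inequality (the main step).} The goal is: for some $\delta>0$ and all small $\gamma$,
\[
w\bigl(\{N(u)>2\lambda,\ S(u)\le\gamma\lambda\}\bigr) \le C\gamma^{\delta}\, w\bigl(\{N_{\text{wide}}(u)>\lambda\}\bigr),
\]
together with $w(\{N_{\text{wide}}>\lambda\}) \lesssim w(\{N>\lambda\})$. The proof runs as follows.
\begin{enumerate}[label=(\roman*)]
\item Decompose the open set $\{N_{\text{wide}} > \lambda\}$ into Whitney intervals $I$.
\item By the $A_\infty$ condition it suffices to prove the Lebesgue-measure estimate $|\{x\in I: N>2\lambda,\ S\le\gamma\lambda\}| \le C\gamma^2 |I|$.
\item Set $F = I\cap\{S\le\gamma\lambda\}$ and let $\Omega_F$ be the sawtooth region over $F$, which is a Lipschitz graph domain with Lipschitz constant $1$.
\item The Whitney property gives a point near $I$ where $N_{\text{wide}} \le \lambda$, hence $|u| \le \lambda$ at the top of $\Omega_F$.
\item Inside $\Omega_F$, use the unweighted inequality: either Green's formula with $y$ replaced by the distance to $\partial\Omega_F$, in the spirit of Lemma \ref{lemma:4.4}, or $L^2$ Hardy space theory after the conformal map of Section 2. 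This bounds $\int_{\partial\Omega_F} N_{\Omega_F}(u-u_{\text{top}})^2 \, \D\sigma$ by $\int_{\Omega_F} y |\nabla u|^2 \lesssim \int_F S(u)^2 \le \gamma^2\lambda^2 |I|$.
\item Chebyshev's inequality then gives the $\gamma^2$ bound on the set where $N > 2\lambda$.
\end{enumerate}

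\emph{Conclusion and main obstacle.} Multiply the good-$\lambda$ inequality by $\lambda$ and integrate to get
\[
\|N\|_{L^2(w)}^2 \le C\gamma^{\delta}\|N\|_{L^2(w)}^2 + C\gamma^{-2}\|S\|_{L^2(w)}^2 .
\]
Choose $\gamma$ small and subtract, which is legitimate by the truncation step. I expect the main obstacle to be the unweighted sawtooth estimate in step (v): one must bound $N$ up to the boundary of a merely Lipschitz domain by the area integral. I would handle it via the already proved Theorem \ref{thm:2.1} and its Appendix extension to all $L$, or via Lemma \ref{lemma:4.4}, since the sawtooth has Lipschitz constant $1$.
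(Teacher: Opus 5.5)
\textbf{Verdict: genuine gap in step (v).} The paper gives no proof of this statement. It quotes it from Gundy--Wheeden \cite{GW74}, whose argument is of good-$\lambda$ type, and the proof environment that follows actually derives Theorem~\ref{thm:2.1} for arbitrary $L$ from it, with $w=|\Phi'|$. So your good-$\lambda$ outline is the intended route. You are also right that the right-hand side must be $\int_{\RR}S(u)^2w\,\D x$, which is what that application computes. The problem is step (v), which carries the whole argument.

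\textbf{The key inequality is false.} The claim $\int_{\Omega_F}y|\nabla u|^2\lesssim\int_F S(u)^2\,\D x$ fails for a general closed $F$. By Fubini the right side equals $\int|\nabla u(X)|^2\,|F\cap(x'-y',x'+y')|\,\D X$. For $X=(x',y')$ in the sawtooth this measure can be much smaller than $y'$, or even zero: take $F$ a single point, or $X$ above an isolated or very thin piece of $F$. Replacing $y$ by $\delta_{\Omega_F}$, or widening the aperture of $S$, does not help.

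What the local step really uses is the pointwise bound $S_\beta(u)\le\gamma\lambda$ on $F$, where $S_\beta$ is the area function over $\Gamma_\beta(x)=\{(x',y'):|x'-x|<\beta y'\}$. The repair goes as follows.
\begin{itemize}
\item Build the sawtooth $\Omega_F=\bigcup_{x\in F}\Gamma_\alpha(x)$ with $\alpha$ larger than the aperture of $N$. With equal apertures, $\Gamma(x)$ need not be a nontangential region of $\Omega_F$, for example at isolated points of $F$.
\item Truncate $\Omega_F$ at height $h\simeq|I|$.
\item For $\beta$ large compared with $\alpha$, the $\Omega_F$-cone at each lower boundary point $Q$ lies inside $\Gamma_\beta(x_Q)$ for some $x_Q\in F$. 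Hence $S_{\Omega_F}(u)\le\gamma\lambda$ there.
\item Fubini over the lower boundary gives $\int_{\Omega_F}\delta_{\Omega_F}|\nabla u|^2\lesssim\int_{\partial\Omega_F}S_{\Omega_F}(u)^2\,\D\sigma$. Combined with $|\nabla u|\lesssim\gamma\lambda/|I|$ near the top (subharmonicity and $S_\beta\le\gamma\lambda$ on $F$), this yields $\int_{\Omega_F}\delta_{\Omega_F}|\nabla u|^2\le C\gamma^2\lambda^2|I|$.
\end{itemize}
Alternatively, restrict to global density points of $\{S_\beta\le\gamma\lambda\}$ and move the exceptional set $\{M(1_{\{S_\beta>\gamma\lambda\}})\ge1/2\}$ to the right side of the good-$\lambda$ inequality. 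That set is controlled because $w\in A_p$ for some $p$.

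\textbf{The top term is missing.} The unweighted estimate carries a top term. Theorem~\ref{thm:4.1} has $\int_K$, which after rescaling is $|I|^{-1}\int_K|u-u_{\mathrm{top}}|^2$. Knowing only $|u|\le\lambda$ at the top makes this $O(\lambda^2|I|)$, which destroys the factor $\gamma^2$. You need $|u-u_{\mathrm{top}}|\lesssim\gamma\lambda$ on the upper part of $\Omega_F$; this follows from the same gradient bound.

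\textbf{One of your proposed inputs is circular.} The Appendix extension of Theorem~\ref{thm:2.1} to all $L$ is deduced \emph{from} Theorem~\ref{thm:A.1}, so you cannot use it here. The Section~2 argument has three limitations for this purpose:
\begin{itemize}
\item It needs $L\le\epsilon_0$. That is reachable only with sawtooth aperture $\alpha\ge1/\epsilon_0$, since the sawtooth's Lipschitz constant is $1/\alpha$, not $1$.
\item It works on unbounded graphs normalized at infinity.
\item It bounds only boundary values, not $N$.
\end{itemize}
The non-circular choice is Section~4. Use Theorem~\ref{thm:4.1} together with the bound on $\int_{\partial\Omega}N_\Omega(u)^2\,\D\sigma$ established in its proof; Lemma~\ref{lemma:4.4} alone still has $N$ on the right. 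Apply it to the truncated sawtooth, whose Lipschitz character is uniform after rescaling by $|I|$.

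\textbf{Smaller repairs.}
\begin{itemize}
\item The normalization needs the reverse-doubling growth $w(J)\gtrsim(|J|/|J_0|)^{\eta}w(J_0)$ for $J\supset J_0$ of $A_\infty$ weights, not merely $w(\RR)=\infty$. This is what makes $\int^{\infty}s^{-1}w(J_s)^{-1/2}\,\D s$ finite, where $J_s$ is an interval of length $s$ about $x$.
\item A half-disk truncation can cut off the top of the sawtooth over Whitney intervals near its edge, where $N_{\mathrm{wide}}\le\lambda$ gives no information. Truncate instead by the tent $\{(x,y):y<\dist(x,\RR\setminus[-R,R])\}$, and take the Whitney constant large compared with $h/|I|$.
\end{itemize}
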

\begin{proof}  The weight $w$ to which we apply this theorem is $w(x) = |\Phi'(x)|$, which we showed in Theorem \ref{thm:1.9} (d) belongs to the Muckenhoupt class $A_2$.
    As is well-known, the class $A_2$ is a subset of the class $A_{\infty}$ (see \cite[p. 196, 1.7]{S93}).
    In fact, the class $A_{\infty}$ is sometimes defined as the union of the classes $A_p$ for $1 < p < \infty$ \cite[p. 196, 1.7]{S93}.

    Recall in the notation of Theorem \ref{thm:2.1}, $\zeta = \xi + \i \eta$ and 
    \begin{align*}
      A \
      &= \ \int\limits_{\Omega} \delta(\zeta) \left|F'(\zeta) \right|^2 \, \D \xi \, \D \eta.
    \end{align*}
    Let $\tilde{F}(z) = F \circ \Phi(z)$, $z \in \RR_{+}^2$.
    Changing variables and using Proposition \ref{prop:1.11}, 
    \begin{align*}
      A \
      &= \ \int\limits_{\RR_{+}^2} \delta(\Phi(z)) \left|\tilde{F}'(z) \right|^2 \, \D x \, \D x \
      \simeq \ \int\limits_{\RR_{+}^2} y \left|\Phi'(z) \right| \left|\tilde{F}'(z) \right|^2 \, \D x \, \D y, 
    \end{align*}
    with $z = x + \i y$.
    Next note that if $I = \{s \in \RR : |s - x| < y\}$, then the lower bound on the Poisson kernel $y/\pi(t^2 + y^2) \geq 1/2 \pi y$ for $|t| \leq y$ yields, for $w(s) = |\Phi'(s)|$,
    \begin{align*}
      \int\limits_{I} w(s) \, \D s \
      &\lesssim \ \int\limits_{I} \re (\Phi'(s)) \, \D s \
      \leq \ 2 \pi y \re (\Phi'(x + \i y)) \
      \lesssim \ y \left|\Phi'(x + \i y) \right|.
    \end{align*}
    Therefore,
    \begin{align*}
      \int_{\RR} S(\tilde{F})^2(s) w(s) \, \D s \
      &= \ \int_{\RR} \left(\int_{|s - x| < y} \left|\tilde{F}(x + \i y) \right|^2 \, \D x \, \D y \right) w(s) \, \D s \\
      &= \ \int_{\RR_{+}^2} \left|\tilde{F}'(x + \i y) \right|^2 \left(\int_{|s - x| < y} w(s) \, \D s \right) \, \D x \, \D y \\
      &\lesssim \ \int\limits_{\RR_{+}^2} \left|\tilde{F}'(x + \i y) \right|^2 y \left|\Phi'(x + \i y) \right| \, \D x \, \D y \\
      &\simeq \ A.
    \end{align*}
    Since $F(\zeta) \to 0$ as $|\zeta| \to \infty$, provided $\delta(\zeta) \geq \delta_0 > 0$, the same holds for $\tilde{F}(x + \i y)$ in the region $y \geq y_0 > 0$.
    Hence, Theorem \ref{thm:A.1} shows that
    \begin{align*}
      \int\limits_{\RR} N(\tilde{F})^2 w(x) \, \D x \
      &\lesssim \ A.
    \end{align*}
    In particular, $\tilde{F}$ is bounded on almost every cone $\Gamma((x, 0))$ with respect to $w(x) \, \D x$.
    By Calder\'on's theorem (see \cite[Theorem 3, p.201]{S70}), the non-tangential limit $\tilde{F}(x)$ exists almost everywhere and
    \begin{align*}
      \int\limits_{\RR} \left|\tilde{F}(x) \right|^2 w(x) \, \D x \
      &\lesssim \ A.
    \end{align*}
    By Remark \ref{rmk:1.12}, $F$ also has non-tangential limits at  almost every $\zeta \in \partial \Omega$ with respect to $\D \sigma$, and $F(\zeta) = \tilde{F}(x)$ for $x = \Phi^{-1}(\zeta)$.
    In all,
    \begin{align*}
      \int\limits_{\partial \Omega} \left|F(\zeta) \right|^2 \, \D \sigma(\zeta) \
      &= \ \int\limits_{\RR} \left|\tilde{F}(x) \right|^2 w(x) \, \D x \
      \lesssim \ A
    \end{align*}
    as desired.
    \end{proof}
    
    As explained  in Section 2, we also deferred to this appendix the one remaining step to prove Corollary \ref{cor:2.2written}.  Recall that it suffices to prove
    the following proposition.
        \begin{prop} 
       Let $\Omega$ and $\epsilon_0 > 0$ be as in Theorem \ref{thm:2.1}.
    Suppose that $F$ is analytic in $\Omega$, and 
      \[
      \int_{\Omega} \delta(\zeta) |F'(\zeta)|^2 \, \D \xi \, \D \eta = A < \infty, 
      \quad (\zeta = \xi + \i \eta).
      \]
      Then there exists $c\in\CC$ such that for any $\delta_0 > 0$,
      \begin{align*}
        F(\zeta) \to c  \ \text{ as } |\zeta| \to \infty  \quad {for} \ \ \zeta\in \Omega \ \ \mbox{satisfying} \ \  \delta(\zeta) \geq \delta_0.
      \end{align*}
      \label{prop:A.2}  
    \end{prop}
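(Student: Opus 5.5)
We prove Proposition \ref{prop:A.2} directly from the pointwise derivative bound already obtained in the proof of Theorem \ref{thm:2.1}, together with a tail estimate for $A$. First, the subharmonicity of $|F'|^2$ and the mean value inequality on $B_{\delta/2}(\zeta)$ give, more precisely than before,
\begin{align*}
|F'(\zeta)|^2 \ \le \ \frac{C}{\delta(\zeta)^3} \int_{B_{\delta(\zeta)/2}(\zeta)} \delta(\zeta')|F'(\zeta')|^2 \, \D \xi' \, \D \eta' \ =: \ \frac{C\,a(\zeta)}{\delta(\zeta)^3}.
\end{align*}
Here $a(\zeta)\le A$. Moreover, $a(\zeta)\to 0$ whenever the balls $B_{\delta(\zeta)/2}(\zeta)$ escape every compact set, since $A<\infty$ (dominated convergence).

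\textbf{Step 1: existence of $c$ along the vertical direction.} Fix $\zeta\in\Omega$. Because $\delta(\zeta+\i s)\simeq \delta(\zeta)+s$ for $L\le 1$, the bound above gives
\begin{align*}
\int_0^\infty |F'(\zeta+\i s)|\,\D s \ \lesssim \ A^{1/2}\int_0^\infty(\delta(\zeta)+s)^{-3/2}\,\D s \ \lesssim \ A^{1/2}\delta(\zeta)^{-1/2}.
\end{align*}
Hence $c(\zeta):=\lim_{s\to\infty}F(\zeta+\i s)$ exists. To see that $c(\zeta)$ does not depend on $\zeta$, take two points $\zeta_1,\zeta_2$ and join $\zeta_1+\i s$ to $\zeta_2+\i s$ by a horizontal segment of fixed length $|\xi_1-\xi_2|$. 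Along this segment $\delta\gtrsim s$, so $|F'|\lesssim A^{1/2}s^{-3/2}$, and the oscillation of $F$ over the segment is $O(s^{-3/2})\to0$. Therefore $c(\zeta)\equiv c$.

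\textbf{Step 2: uniform convergence in $\{\delta\ge\delta_0\}$.} This is the main point, because $|\zeta|\to\infty$ may occur in horizontal directions, where $\delta$ stays bounded. For $\zeta$ with $\delta(\zeta)\ge\delta_0$, write
\begin{align*}
|F(\zeta)-c| \ \le \ \int_0^\infty |F'(\zeta+\i s)|\,\D s \ \lesssim \ \int_0^\infty \frac{a(\zeta+\i s)^{1/2}}{(\delta(\zeta)+s)^{3/2}}\,\D s .
\end{align*}
Split this integral at $s=S$. The part $s>S$ is bounded by $CA^{1/2}S^{-1/2}$, which is small once $S$ is chosen large. For the part $s\le S$, the points $\zeta+\i s$ with $s\le S$ lie within distance $S$ of $\zeta$, and they satisfy $\delta\ge\delta_0/C$. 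As $|\zeta|\to\infty$, the balls $B_{\delta/2}(\zeta+\i s)$ with $s\le S$ all lie outside any prescribed compact set, so $\sup_{s\le S}a(\zeta+\i s)\to0$. This part is then at most $C\delta_0^{-1/2}\sup_{s\le S} a^{1/2}\to0$.

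The delicate point is making ``escape to infinity'' uniform: we need that $\sup\{a(\zeta') : |\zeta'|\ge R,\ \delta(\zeta')\ge \delta_0/C\}\to 0$ as $R\to\infty$. This holds because each such ball is contained in $\{|\zeta''|\ge R/2\}$ for $R$ large relative to $\delta$. Balls with very large $\delta$ need not escape, but for those we do not need $a$ to be small: the $s\le S$ integral is then already small, being bounded by $A^{1/2}\delta^{-1/2}$. So we split further into the cases $\delta(\zeta)\le R^{1/2}$ and $\delta(\zeta)> R^{1/2}$, and conclude that $F(\zeta)\to c$.
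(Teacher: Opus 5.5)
Your proof is correct, but it localizes differently from the paper.

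The paper works with cones at boundary points:
\begin{itemize}
\item it sets $A(\zeta)=\int_{\Gamma_2(\zeta)}\delta|F'|^2$ for $\zeta\in\partial\Omega$ and proves $|F'(z)|\lesssim A(\zeta)^{1/2}\delta(z)^{-3/2}$ on $\Gamma_0(\zeta)$;
\item it obtains $c$ as a limit along a dyadic chain in $\Gamma_0(\zeta)$, independent of $\zeta$ because the cones eventually overlap;
\item it derives the cone-wise bound $|F(z)-c|\lesssim A(\zeta)^{1/2}\delta(z)^{-1/2}$ and concludes from $A(\zeta)\to0$ as $|\zeta|\to\infty$.
\end{itemize}
You instead localize to Whitney balls, integrate $|F'|$ along vertical rays, and get uniformity by splitting each ray at height $S$.

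The paper's route gives a neat nontangential-type estimate controlled by a local area integral. However, it needs the fixed-aperture cones $\Gamma_2(\zeta)$ to lie in $\Omega$ and to leave every bounded set as $|\zeta|\to\infty$, which is where the smallness of $L$ enters. Your route uses only $\delta(\zeta+\i s)\simeq\delta(\zeta)+s$, which holds for every Lipschitz graph. It also handles escape to infinity in horizontal directions (where $\delta$ stays bounded) more explicitly than the paper's brief closing case distinction.

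Your ``delicate point'' is easier than you suggest. Fix $p\in\partial\Omega$; then $\delta(\zeta')\le|\zeta'-p|$, so every ball $B_{\delta(\zeta')/2}(\zeta')$ lies in $\{|z|\ge(|\zeta'|-|p|)/2\}$. Hence $\sup_{|\zeta'|\ge R}a(\zeta')\le\int_{\Omega\cap\{|z|\ge(R-|p|)/2\}}\delta|F'|^2\to0$, with no restriction on $\delta$. The extra split at $\delta=R^{1/2}$ is therefore harmless but unnecessary.

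Also, in Step 1 the segment from $\zeta_1+\i s$ to $\zeta_2+\i s$ is horizontal only when $\eta_1=\eta_2$. Its length is $|\zeta_1-\zeta_2|$, which is all you use.
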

 \begin{proof} The main step of the argument is the same as the one above
 between \eqref{eq:14typed}  and \eqref{eq:15typed}, localized to cones $\Gamma_\alpha(\zeta)$ defined below.     For each $\zeta\in \partial \Omega$, denote by
    \[ 
    A(\zeta) = \int_{\Gamma_2 (\zeta)} \delta(z) |F'(z)|^2 \, dxdy \quad (z = x+iy),
    \]
    where $\Gamma_\alpha(\zeta)$ is the cone with vertex $\zeta$ given by 
    \[
   \Gamma_\alpha(\zeta)
     = \{\zeta + x+iy:  |x| < (1+\alpha) y, \ \ (x,y) \in \RR^2\}.
   \]
Let  $z\in \Gamma_0(\zeta)$.  We claim that
\begin{equation}\label{eq:cone-bound}
|F'(z)| \lesssim A(\zeta)^{1/2}\delta(z)^{-3/2}.
\end{equation}
To see this, consider $D$, the disk of radius $\delta(z)/100$ around $z$. Then
since $|F'|^2$ is subharmonic,  and $D\subset \Gamma_2(\zeta)$, we have
\[
|F'(z)|^2 \leq \frac1{|D|} \int_D |F'(\zeta)|^2 \, d\xi d\eta
\lesssim \delta(z)^{-3} \int_{D} \delta(\zeta)|F'(\zeta)|^2 \, d\xi d\eta 
\leq  \delta(z)^{-3} A(\zeta).
\]
Next, consider any $z$, $w\in \Gamma_0(\zeta)$
such that $\delta(z)/10 \le \delta(w) \le 10 \delta(z)$. Then, 
integrating along the line $z_t = tz + (1-t)w$, and applying \eqref{eq:cone-bound},
we have 
\[
|F(z) - F(w)| = \left|\int_0^1 (z-w)F'(z_t) \, dt \right|
\lesssim A(\zeta)^{1/2} \delta(z)^{-1/2}.
\]
Take  any  sequence $w_k\in \Gamma_0(\zeta)$ such that
$2^k \delta(z) \le \delta(w_k) \le 2^{k+1}\delta(z)$, $k = 1, \, 2, \dots$.  
Then  the preceding estimate yields 
\[
|F(w_{k+1}) - F(w_k)| \lesssim A(\zeta)^{1/2}2^{-k/2} \delta(z)^{-1/2},
\]
and hence the limit
\[
c(\zeta) = \lim_{k\to \infty} F(w_k)
\]
exists.  Since this is valid for any such sequence, we have 
\[
\lim_{|w|\to\infty} (F(w)  - c(\zeta)) = 0  \quad \mbox{for} \ \ w \in \Gamma_0(\zeta).
\]
Furthermore, the cones $\Gamma_0(\zeta)$ and $\Gamma_0(\zeta')$ overlap for sufficiently large $|w|$, so $c(\zeta)$ is a constant, independent of $\zeta\in \p \Omega$.

We also have for all $z\in \Gamma_0(\zeta)$, the estimate 
\begin{align*}
|F(z) - c| &  \leq  |F(z) - F(w_1)| +  \sum_{k=}^\infty |F(w_{k+1}- F(w_k)| \\
&  \lesssim \ 
A(\zeta)^{1/2} \delta(z)^{-1/2} \,
 \sum_{k=0}^\infty 2^{-k/2}  \ \lesssim \  A(\zeta)^{1/2} \delta(z)^{-1/2}.
\end{align*}
Lastly, note that the monotone convergence theorem, and the fact that  the full area integral,
\[
\int_{\Omega} \delta(\zeta) |F'(\zeta)|^2 \, \D \xi \, \D \eta = A,
\]
is finite imply $A(\zeta) \to 0$ as $|\zeta| \to \infty$, $\zeta \in \p \Omega$. 
In all, if $w\in \Omega$, $\delta(w) \ge\delta_0>0$, and $|w| \to \infty$, then either
$\delta(w)\to \infty$ in which case the estimate
\[
|F(w) - c| \lesssim A(\zeta)^{1//2} \delta(w)^{-1/2} \le A^{1/2} \delta(w)^{-1/2} \to 0,
\]
or else $w \in \Gamma_0(\zeta)$ with $|\zeta| \to \infty$, and we have 
\[
|F(w) - c| \lesssim A(\zeta)^{1//2} \delta(w)^{-1/2} \lesssim A(\zeta)^{1//2} \delta_0^{-1/2} \to 0.
\]
This concludes the proof of the proposition, and hence also Corollary \ref{cor:2.2written}.
\end{proof}

  \clearpage
  \nocite{*}
    \printbibliography[title={References}]
    
\end{document}